\documentclass[sn-mathphys,Numbered]{sn-jnl}
\usepackage[utf8]{inputenc}
\usepackage{xpatch}
\usepackage{fancyhdr}
\usepackage{latexsym}
\usepackage{eufrak}
\usepackage[T1]{fontenc}
\usepackage{tikz}
\usepackage{xparse}
\usepackage{enumitem}
\usepackage{indentfirst}
\usepackage{graphicx}
\usepackage{multirow}
\usepackage{amsmath,amssymb,amsfonts}
\usepackage{amsthm}
\usepackage{mathrsfs}
\usepackage[title]{appendix}
\usepackage{xcolor}
\usepackage{textcomp}
\usepackage{manyfoot}
\usepackage{booktabs}
\usepackage{algorithm}
\usepackage{algorithmicx}
\usepackage{algpseudocode}
\usepackage{listings}

\DeclareMathOperator*{\R}{Re}

\DeclareMathOperator*{\sgn}{sgn}
\theoremstyle{thmstyleone}
\newtheorem{definition}{Definition}[section]
\newtheorem{theorem}{Theorem}[section]
\newtheorem{lemma}{Lemma}[section]

\newtheorem{remark}{Remark}[section]

\newtheorem{proposition}{Proposition}[section]

\newtheorem{assumption}{Assumption}
\raggedbottom
\begin{document}
\title[Article Title]{On the fractional abstract Schr\"odinger-type evolution equations on the Hilbert space and its applications to the fractional dispersive equations}
\author*[1]{\fnm{Mingxuan} \sur{He}}\email{202121511131@smail.xtu.edu.cn}
\author[1]{\fnm{Na} \sur{Deng}}\email{202121511122@smail.xtu.edu.cn}
\equalcont{These authors contributed equally to this work.}
\affil*[1]{\orgdiv{School of Mathematics and Computational Science}, \orgname{Xiangtan University}, \orgaddress{\city{Xiangtan}, \postcode{411100}, \state{Hunan province}, \country{China}}}
\abstract{In this paper we prove the local and global well-posedness of the fractional abstract Schr\"odinger-type evolution equation ($iD_t^\alpha u+Au+F(u)=0$) on the Hilbert space and as an application, we prove the local and global well-posedness of the fractional dispersive equation with static potential ($D_t^\alpha-iP(D)u-iqu-iVu+F(u)=0$) under the only assumption that the symbol $P(\xi)$ of $P(D)$ behaves like $\left\lvert\xi\right\rvert^m$ for $\lvert\xi\rvert\to\infty$. In appendix, we also give the H\"older regularities and the asymptotic behaviors of the mild solution to the linear fractional abstract Schr\"odinger-type equation ($iD_t^\alpha u+Au+F(t)=0$). Because of the lack of the semigroup properties of the solution operators, we employ a strategy of proof based on the spectral theorem of the selfadjoint operators and the asymptotic behaviors of the Mittag-Leffler functions.}
\keywords{Local and global well-posedness, Time fractional abstract Schr\"odinger-type equation, Fractional dispersive equation, The spectral theorem, The perturbation of the selfadjoint operator, H\"older regularity, Asymptotic behavior}
\maketitle
\section{Introduction}
\subsection{Background and main results}
In the past few decades, fractional calculus has gained a great deal of attention from mathematicians and has been shown to be valuable in areas such as physics, engineering, and economics. For further information about fractional derivatives, we refer readers to \cite{Theory-and-Applications-of-Fractional-Differential-Equations-Chapter2-Fractional-integrals-and-fractional-derivatives,Fractional-Calculus-and-Waves-in-Linear-Viscoelasticity,Fractional-differential-equations-an-introduction-to-fractional-derivatives-fractional-differential-equations-to-methods-of-their-solution-and-some-of-their-applications} and we will give a brief introduction of them in  Appendix $\ref{9.1Fractionl integral and derivative definition appendix}$.

The goal of this paper is to investigate the well-posedness of the fractional abstract Schr\"odinger-type evolution equation
\begin{equation}
\begin{cases}
iD_t^\alpha u+Au+F(u)=0,\quad t>0\\
u(0)=x,
\end{cases}
\label{nonlinear Schrodinger equation Hilbert}
\end{equation}
on a separable Hilbert space $H$ with some suitable regularity hypotheses on $F$ and $x$. $A$ is a selfadjoint operator in $H$.

To state the hypotheses on $F$, we first introduce a function space $C_q[0,\infty)$: we say a continuous, nondecreasing and nonnegative function $w$ is in $C_q[0,\infty)$ if $w:[0,\infty)\to[0,\infty)$ satisfies $w(0)=0$ and $w(\sigma)\neq0$ when $\sigma\neq0$ and there exists a $\varepsilon>0$ such that
\[ \int_1^\infty\frac{\sigma^{\frac{1}{\alpha}+\varepsilon-1}}{w(\sigma)^{\frac{1}{\alpha}+\varepsilon}}d\sigma=q. \]
Hence we can state the hypotheses on $F$ as:
\begin{assumption}
$F(0)=0$. If $\left\lVert u(t)\right\rVert_{D(A)}$ and $\left\lVert v(t)\right\rVert_{D(A)}$ is bounded on $I\subset[0,\infty)$ a.e., then $\left\lVert F(u)-F(v)\right\rVert_{D(A)}\leq C\left\lVert u(t)-v(t)\right\rVert_{D(A)}$ a.e. on $I$ where $C$ is dependent on the initial data $u(0),v(0)$ with the norm $\rho(\cdot)$ and the essential upper bound of $\left\lVert u(t)\right\rVert_{D(A)}$ and $\left\lVert v(t)\right\rVert_{D(A)}$ on $I$.
\label{AssumptionA'}
\end{assumption}
\begin{assumption}
There exists a $w\in C_\infty[0,\infty)$ and a positive constant $C$ which depends on the initial data $u(0)$ with the norm $\varrho(\cdot)$ such that $\left\lVert F(u)\right\rVert_{D(A)}\leq Cw\left(\left\lVert u(t)\right\rVert_{D(A)}\right)$ pointwisely in $t$.
\label{AssumptionB'}
\end{assumption}
In Section $\ref{9.17Proof of Theorem unique mild solution X}$, $\ref{Proof of Theorem continuation and blow up alternative}$ and $\ref{Proof of Theorem unique global solution X}$, we will prove the following results.
\begin{theorem}[local well-posedness]
Let Assumption $\ref{AssumptionA'}$ hold and $x\in D(A)$ such that $\left\lvert x\right\rvert_1:=\max\left\{\left\lVert x\right\rVert_{D(A)},\rho(x)\right\}<\infty$. There exists a positive number $T$ which depends only on $\left\lVert x\right\rVert_{D(A)}$ and $\rho(x)$ such that $(\ref{nonlinear Schrodinger equation Hilbert})$ admits a unique strict solution $u(t)$ on $[0,T]$ in the class
\[ u\in C\left([0,T];D(A)\right),\mathbf{D}_t^\alpha(u-x)\in C\left([0,T];H\right). \]
Moreover, if $u(t), v(t)$ are the strict solutions of $(\ref{nonlinear Schrodinger equation Hilbert})$ with the initial data $x, y$ respectively, then there exists a positive constant $C$ which depends on $\rho(x),\rho(y)$ and $\left\lVert u\right\rVert_{L^\infty\left((0,T);D(A)\right)},\left\lVert v\right\rVert_{L^\infty\left((0,T);D(A)\right)}$ such that
\begin{equation}
\left\lVert u(t)-v(t)\right\rVert_{D(A)}\leq CE_{\alpha,1}\left(\Gamma(\alpha)t^\alpha\right)\left\lVert x-y\right\rVert_{D(A)}.
\label{unique mild solution X theorem equation1}
\end{equation}
\label{unique mild solution X}
\end{theorem}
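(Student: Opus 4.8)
The plan is to recast $(\ref{nonlinear Schrodinger equation Hilbert})$ as a fixed-point (Duhamel) equation and solve it by a contraction argument, where the spectral theorem supplies and bounds the solution operators in place of the semigroup estimates that are unavailable here. First I would rewrite the equation as $\mathbf{D}_t^\alpha u = iAu + iF(u)$ with $u(0)=x$, and, using the spectral resolution $A=\int_{\mathbb R}\lambda\,dE(\lambda)$, define by functional calculus the operators $\mathcal S_\alpha(t)=\int_{\mathbb R}E_{\alpha,1}(i\lambda t^\alpha)\,dE(\lambda)$ and $\mathcal K_\alpha(t)=\int_{\mathbb R}t^{\alpha-1}E_{\alpha,\alpha}(i\lambda t^\alpha)\,dE(\lambda)$. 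A mild solution is then a fixed point of $\Phi(u)(t)=\mathcal S_\alpha(t)x+i\int_0^t\mathcal K_\alpha(t-s)F(u(s))\,ds$.

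The key analytic input is the boundedness of these operators on $D(A)$. Since $0<\alpha<1$, the arguments $i\lambda t^\alpha$ lie on the imaginary axis, i.e. in the sector $\lvert\arg z\rvert=\pi/2>\alpha\pi/2$, so the asymptotic estimates for the Mittag-Leffler functions give $\sup_{\lambda\in\mathbb R}\lvert E_{\alpha,1}(i\lambda t^\alpha)\rvert<\infty$ and $\sup_{\lambda\in\mathbb R}\lvert E_{\alpha,\alpha}(i\lambda t^\alpha)\rvert<\infty$, uniformly for $t$ in a bounded interval. Because these multipliers depend only on $\lambda$, they commute with $(1+\lambda^2)^{1/2}$, and the spectral theorem converts the multiplier bounds into $\lVert\mathcal S_\alpha(t)\rVert_{D(A)\to D(A)}\le C_1$ and $\lVert\mathcal K_\alpha(\tau)\rVert_{D(A)\to D(A)}\le C_2\tau^{\alpha-1}$ for $t,\tau\in[0,T]$, with the weight $\tau^{\alpha-1}$ integrable near $0$.

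Next I would run the contraction on $B_{M,T}=\{u\in C([0,T];D(A)):u(0)=x,\ \lVert u\rVert_{L^\infty((0,T);D(A))}\le M\}$. Assumption $\ref{AssumptionA'}$ together with $F(0)=0$ yields $\lVert F(u)\rVert_{D(A)}\le C\lVert u\rVert_{D(A)}$ with a constant $C=C(\rho(x),M)$ that is finite because $\rho(x)<\infty$; note that this constant depends on $\rho$ of the \emph{fixed} initial datum, so no extra regularity has to be propagated in time. The estimates $\lVert\Phi(u)(t)\rVert_{D(A)}\le C_1\lVert x\rVert_{D(A)}+C_2C M\,t^\alpha/\alpha$ and $\lVert\Phi(u)(t)-\Phi(v)(t)\rVert_{D(A)}\le C_2C\,(T^\alpha/\alpha)\,\lVert u-v\rVert_{L^\infty((0,T);D(A))}$ then show that, for $M=2C_1\lVert x\rVert_{D(A)}$ and $T$ small, $\Phi$ is a self-map and a contraction; since $C$ depends only on $\rho(x)$ and $M$, the resulting $T$ depends only on $\lVert x\rVert_{D(A)}$ and $\rho(x)$, as asserted. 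Banach's theorem gives the unique mild solution. To upgrade it to a strict solution I would use that $u\in C([0,T];D(A))$ makes $F(u)\in C([0,T];D(A))$, and feed this regularity into the linear inhomogeneous theory (the Hölder-regularity results) to obtain $u\in C([0,T];D(A))$ with $\mathbf D_t^\alpha(u-x)\in C([0,T];H)$ satisfying the equation pointwise.

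Finally, for the continuous-dependence estimate $(\ref{unique mild solution X theorem equation1})$ I would subtract the two Duhamel formulas, apply the operator bounds and the Lipschitz estimate to get $\lVert u(t)-v(t)\rVert_{D(A)}\le C_1\lVert x-y\rVert_{D(A)}+C_2C\int_0^t(t-s)^{\alpha-1}\lVert u(s)-v(s)\rVert_{D(A)}\,ds$, and then invoke the fractional (generalized) Grönwall inequality, whose kernel $(t-s)^{\alpha-1}$ resums into a Mittag-Leffler function and produces precisely the factor $E_{\alpha,1}(\Gamma(\alpha)t^\alpha)$. \textbf{The main obstacle} I anticipate is the second step: proving the uniform-in-$\lambda$ bounds on the Mittag-Leffler multipliers and the induced $D(A)\to D(A)$ operator bounds without any semigroup structure, and then the mild-to-strict passage, which requires enough time-regularity of the forcing $F(u)$ for the Caputo derivative to exist and be continuous into $H$.
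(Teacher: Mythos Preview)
Your proposal is correct and follows essentially the same route as the paper: define the solution operators $S_t$ and $P_t$ via the spectral theorem and Mittag-Leffler multipliers, use the asymptotics of $E_{\alpha,\beta}$ on the imaginary axis to get $D(A)\to D(A)$ bounds (the paper packages these as Proposition~\ref{lemma linear new2} and Proposition~\ref{lemma linear new3} after a low/high frequency splitting, but the content is the same), run a contraction on a ball in $L^\infty((0,T);D(A))$, upgrade to a strict solution via the linear theory (Lemma~\ref{inhomogeneous mild solution strict} and Theorem~\ref{linear strict solution theorem}), and obtain the dependence estimate from the fractional Gronwall inequality (Lemma~\ref{Yagislemma}). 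The only cosmetic difference is that the paper carries out the contraction in $L^\infty_T D(A)$ and recovers continuity a posteriori from the mapping properties of $G$, whereas you work directly in $C([0,T];D(A))$; both are fine.
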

\begin{theorem}[continuation and blow-up alternative]
Let the assumptions in Theorem $\ref{unique mild solution X}$ hold and $u$ be the strict solution of $(\ref{nonlinear Schrodinger equation Hilbert})$ on $[0,T]$. Then $u$ can be extended to a maximal interval $[0,T_{\max})$ uniquely such that
\[ u\in C\left([0,T_{\max});D(A)\right).\quad\mathbf{D}_t^\alpha(u-x)\in C\left([0,T_{\max});H\right) \]
and $T_{\max}<\infty$ implies $\lim\limits_{t\uparrow T_{\max}}\left\lVert u(t)\right\rVert_{D(A)}=\infty$.
\label{continuation and blow up alternative}
\end{theorem}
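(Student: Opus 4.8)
The plan is to construct the maximal solution by gluing and then to prove the dichotomy by contradiction. Let $\mathcal S$ be the set of $T'>0$ for which \eqref{nonlinear Schrodinger equation Hilbert} has a strict solution on $[0,T']$ in the class of Theorem \ref{unique mild solution X}; this set is nonempty by that theorem, and I put $T_{\max}:=\sup\mathcal S\in(0,\infty]$. The key consistency step is that any two strict solutions $u,v$ on $[0,T_1]$ and $[0,T_2]$ (with $T_1\le T_2$) sharing the datum $x$ must agree on $[0,T_1]$: applying \eqref{unique mild solution X theorem equation1} with $y=x$ on $[0,T_1]$, where the constant is finite because $u,v\in C([0,T_1];D(A))$ have finite $L^\infty$-norms there, gives $\lVert u(t)-v(t)\rVert_{D(A)}\le 0$. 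Hence the solutions on the various intervals are mutually restrictions, their union defines a unique $u\in C([0,T_{\max});D(A))$ with $\mathbf D_t^\alpha(u-x)\in C([0,T_{\max});H)$ solving \eqref{nonlinear Schrodinger equation Hilbert}, and existence and uniqueness of the maximal continuation are settled.

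For the blow-up alternative I assume $T_{\max}<\infty$ and, for contradiction, that $M:=\sup_{0\le t<T_{\max}}\lVert u(t)\rVert_{D(A)}<\infty$, and I extend $u$ strictly past $T_{\max}$. The conceptual difficulty, and the reason the usual ODE restart is unavailable, is that $D_t^\alpha$ is nonlocal: the solution operators do not form a semigroup, so restarting \eqref{nonlinear Schrodinger equation Hilbert} from the state $u(t_0)$ at an interior time would change the equation. I therefore keep the datum $x$ and work with the mild formulation from the proof of Theorem \ref{unique mild solution X}, $u(t)=S_\alpha(t)x+\int_0^t P_\alpha(t-s)F(u(s))\,ds$, where $S_\alpha,P_\alpha$ are the spectrally defined Mittag-Leffler solution operators, with $S_\alpha$ uniformly bounded on $[0,T_{\max}+1]$ and $\lVert P_\alpha(\tau)\rVert_{\mathcal L(D(A))}\le C\tau^{\alpha-1}$. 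For $t\ge T_{\max}$ I split the memory at $T_{\max}$, writing $u(t)=G(t)+\int_{T_{\max}}^t P_\alpha(t-s)F(u(s))\,ds$ with $G(t):=S_\alpha(t)x+\int_0^{T_{\max}}P_\alpha(t-s)F(u(s))\,ds$. Since $M<\infty$, Assumption \ref{AssumptionA'} with $F(0)=0$ gives $\lVert F(u(s))\rVert_{D(A)}\le CM$ on $[0,T_{\max})$, so $G\in C([T_{\max},T_{\max}+\delta];D(A))$ is a known bounded forcing term. The residual equation on the short interval is then solved by the contraction principle in $C([T_{\max},T_{\max}+\delta];D(A))$: crucially the Lipschitz constant from Assumption \ref{AssumptionA'} depends only on $\rho(x)$ (unchanged, as the datum is still $x$) and on $M$, while the contraction factor is of order $\delta^\alpha$, so $\delta>0$ may be fixed depending only on $M$ and $\rho(x)$. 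The resulting strict solution on $[0,T_{\max}+\delta]$ contradicts the maximality of $T_{\max}$.

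Consequently $T_{\max}<\infty$ forces $\sup_{0\le t<T_{\max}}\lVert u(t)\rVert_{D(A)}=\infty$, and because $u$ is bounded on every compact subinterval this blow-up occurs as $t\uparrow T_{\max}$. The step I expect to be the real obstacle is upgrading this to the pointwise limit $\lim_{t\uparrow T_{\max}}\lVert u(t)\rVert_{D(A)}=\infty$ rather than only $\limsup_{t\uparrow T_{\max}}\lVert u(t)\rVert_{D(A)}=\infty$: trying to extend from a sequence $t_n\uparrow T_{\max}$ along which $\lVert u(t_n)\rVert_{D(A)}$ stays bounded fails precisely because of the memory, since the history term $G$ cannot be controlled by $\lVert u(t_n)\rVert_{D(A)}$ alone and the admissible extension length degenerates to $0$. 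To close this I would derive an a priori fractional-Gronwall estimate from the mild formula and Assumption \ref{AssumptionA'}, producing a continuous majorant of $t\mapsto\lVert u(t)\rVert_{D(A)}$ that remains finite on $[0,T_{\max}]$ whenever it is finite along some sequence $t_n\uparrow T_{\max}$, thereby ruling out oscillation between finite and infinite values. The remaining points — continuity of $G$, the uniform operator bounds for $S_\alpha,P_\alpha$ coming from the algebraic decay of the Mittag-Leffler functions along the imaginary axis, and the verification that the constructed extension is strict rather than merely mild — are routine given the machinery already developed for Theorem \ref{unique mild solution X}.
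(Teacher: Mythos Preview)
Your approach is essentially the same as the paper's: construct $T_{\max}$ as the supremum of existence times, glue via the uniqueness estimate \eqref{unique mild solution X theorem equation1}, and for the alternative assume uniform boundedness on $[0,T_{\max})$ and run a contraction that keeps the full history fixed. The paper packages the extension step as a separate continuation lemma (Lemma~\ref{extended function strict solution}): it works in the space of functions on $[0,T^*]$ that coincide with $u$ on $[0,T]$, which is exactly your device of freezing the history term $G(t)$ and contracting only on the tail integral. One point the paper makes explicit that you leave as ``routine'' is the continuity of $u$ at $t=T_{\max}$: under the boundedness hypothesis, the paper invokes the time-increment estimates of Proposition~\ref{proposition linear new approach bilinear1} and Proposition~\ref{8.16proposition linear new approach bilinear2} to show $u\in C([0,T_{\max}];D(A))$, and only then applies the continuation lemma from the closed interval. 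You should do the same, since your $G$ must match $u$ continuously at $T_{\max}$ for the glued function to lie in $C([0,T_{\max}+\delta];D(A))$.

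Your observation about $\lim$ versus $\limsup$ is sharp, and in fact the paper's own proof also only yields $\limsup_{t\uparrow T_{\max}}\lVert u(t)\rVert_{D(A)}=\infty$: it assumes a uniform bound on $[0,T_{\max})$ and derives a contradiction, nothing more. Your proposed Gronwall repair, however, does not close the gap under Assumption~\ref{AssumptionA'} alone, because the Lipschitz constant there depends on the \emph{essential supremum} of $\lVert u\rVert_{D(A)}$ over the interval, not on a pointwise value; feeding this into the mild formula gives an inequality whose constant already presupposes the very boundedness you are trying to establish. In the semigroup case the upgrade to $\lim=\infty$ comes from restarting at $t_n$ with $\lVert u(t_n)\rVert$ bounded, but as you correctly note the memory prevents this here, and the extension length in Lemma~\ref{extended function strict solution} depends on $\lVert u\rVert_{L^\infty_T D(A)}$, not on $\lVert u(T)\rVert_{D(A)}$. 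So your argument matches the paper's in content; the stronger conclusion $\lim=\infty$ is not actually justified by either proof without an additional hypothesis such as Assumption~\ref{AssumptionB'}.
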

\begin{theorem}[global well-posedness]
Let the assumptions in Theorem $\ref{unique mild solution X}$ and Assumption $\ref{AssumptionB'}$ hold. If $x\in D(A)$ satisfying $\left\lvert x\right\rvert_2:=\max\left\{\left\lVert x\right\rVert_{D(A)},\rho(x),\varrho(x)\right\}<\infty$, $(\ref{nonlinear Schrodinger equation Hilbert})$ admits a unique strict solution $u(t)$ on $[0,\infty)$ in the class
\[ u\in C\left([0,\infty);D(A)\right),\quad\mathbf{D}_t^\alpha(u-x)\in C\left([0,\infty);H\right). \]
That is, the strict solution in Theorem $\ref{unique mild solution X}$ is global.
\label{unique global solution X}
\end{theorem}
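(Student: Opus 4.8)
The plan is to argue by contradiction, combining the blow-up alternative of Theorem \ref{continuation and blow up alternative} with an a priori bound on $\left\lVert u(t)\right\rVert_{D(A)}$ coming from Assumption \ref{AssumptionB'}. Since $\left\lvert x\right\rvert_2<\infty$ implies $\left\lvert x\right\rvert_1<\infty$, Theorem \ref{unique mild solution X} and Theorem \ref{continuation and blow up alternative} furnish a unique strict solution on a maximal interval $[0,T_{\max})$, and it suffices to show $T_{\max}=\infty$. Suppose instead that $T_{\max}<\infty$; then $\lim_{t\uparrow T_{\max}}\left\lVert u(t)\right\rVert_{D(A)}=\infty$. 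I will contradict this by bounding $\left\lVert u(t)\right\rVert_{D(A)}$ uniformly on $[0,T_{\max})$.

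First I would write the strict solution in its variation-of-parameters (mild) form. Rewriting the equation as $D_t^\alpha u=iAu+iF(u)$ and invoking the spectral theorem for the selfadjoint operator $A$, the solution is represented through the Mittag-Leffler solution operators as
\[ u(t)=E_{\alpha,1}(iAt^\alpha)x+\int_0^t(t-s)^{\alpha-1}E_{\alpha,\alpha}\bigl(iA(t-s)^\alpha\bigr)\,iF(u(s))\,ds. \]
Because these operators are functions of $A$, they commute with $A$, so their $D(A)$-operator norm coincides with their $H$-operator norm, namely $\sup_{\lambda\in\sigma(A)}\lvert E_{\alpha,\cdot}(i\lambda t^\alpha)\rvert$. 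Since $A$ is selfadjoint, $\sigma(A)\subset\mathbb{R}$, and for $0<\alpha<1$ the rays $\arg z=\pm\tfrac{\pi}{2}$ lie strictly outside the growth sector $\lvert\arg z\rvert\le\alpha\pi/2$; the asymptotic behavior of the Mittag-Leffler functions then gives uniform bounds $\lvert E_{\alpha,1}(iy)\rvert,\lvert E_{\alpha,\alpha}(iy)\rvert\le M$ for all $y\in\mathbb{R}$, hence a constant $M$ independent of $t$. Applying Assumption \ref{AssumptionB'} yields the weakly singular integral inequality
\[ \left\lVert u(t)\right\rVert_{D(A)}\le M\left\lVert x\right\rVert_{D(A)}+CM\int_0^t(t-s)^{\alpha-1}w\bigl(\left\lVert u(s)\right\rVert_{D(A)}\bigr)\,ds. \]

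The crux is to extract a finite-interval bound for $\phi(t):=\left\lVert u(t)\right\rVert_{D(A)}$ from this inequality. The kernel $(t-s)^{\alpha-1}$ is singular, so the classical Bihari inequality does not apply directly; I would follow Medved's device of inserting an exponential weight and applying H\"older's inequality with exponent $\gamma:=\tfrac{1}{\alpha}+\varepsilon$ and its conjugate $\gamma'$. The precise role of $\gamma$ is that it is exactly the conjugate exponent making the kernel power locally integrable: a short computation shows $(\alpha-1)\gamma'>-1$ (equivalently $\gamma'<\tfrac{1}{1-\alpha}$) for every such $\gamma$. Raising the inequality to the power $\gamma$ and absorbing the exponential factor into a new unknown $\psi(t):=e^{-\mu t}\phi(t)^\gamma$ produces a genuine Bihari-type inequality $\psi(t)\le a+b\int_0^t\tilde w(\psi(s))\,ds$ with the non-singular comparison function $\tilde w(\tau)=w(\tau^{1/\gamma})^\gamma$ (the $T_{\max}$-dependent constants introduced by the weight are harmless, as we only need boundedness on the fixed interval $[0,T_{\max})$). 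The Bihari comparison theorem then bounds $\psi$, hence $\phi$, on $[0,T_{\max})$ as long as $\int^{\infty}d\tau/\tilde w(\tau)=\infty$; and under the change of variable $\tau=\sigma^\gamma$ one has $\int^{\infty}d\tau/\tilde w(\tau)=\gamma\int^{\infty}\frac{\sigma^{\gamma-1}}{w(\sigma)^\gamma}\,d\sigma$, which diverges precisely because $w\in C_\infty[0,\infty)$. Thus $\limsup_{t\uparrow T_{\max}}\left\lVert u(t)\right\rVert_{D(A)}<\infty$, contradicting the blow-up alternative, so $T_{\max}=\infty$ and the solution is global.

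The main obstacle is the a priori estimate of the preceding paragraph: reconciling the weakly singular kernel with a general growth rate $w$. The exponent $\tfrac{1}{\alpha}+\varepsilon$ in the definition of $C_\infty[0,\infty)$ is not arbitrary but is forced on us — it is exactly the H\"older conjugate exponent that renders $(t-s)^{\alpha-1}$ integrable after the exponential-weight transformation, and the Osgood-type divergence of the defining integral is exactly the condition preventing finite-time blow-up of the comparison equation. Verifying that $\tilde w$ inherits the monotonicity, positivity, and divergence $\int^{\infty}d\tau/\tilde w(\tau)=\infty$ required by the Bihari theorem is the technical heart of the argument; once the uniform bound is in hand, the contradiction with Theorem \ref{continuation and blow up alternative} is immediate.
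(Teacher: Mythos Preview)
Your proposal is correct and follows essentially the same route as the paper: contradiction with the blow-up alternative, the weakly singular integral inequality $\left\lVert u(t)\right\rVert_{D(A)}\lesssim 1+\int_0^t(t-s)^{\alpha-1}w(\left\lVert u(s)\right\rVert_{D(A)})\,ds$, and then a Medved--Bihari argument with H\"older exponent $\gamma=\tfrac{1}{\alpha}+\varepsilon$ exploiting $w\in C_\infty[0,\infty)$. The only cosmetic difference is that the paper packages the Medved step into a separate lemma (Lemma~\ref{9.9proof of Theorem unique global solution X lemma}, stated without proof) and cites the earlier linear estimates (Proposition~\ref{lemma linear new2} and \eqref{lemma linear new3 proof equation3}) rather than re-deriving the Mittag-Leffler operator bounds, whereas you spell out the mechanism explicitly.
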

\begin{remark}
You can find the notion of the solution of $(\ref{nonlinear Schrodinger equation Hilbert})$ in Definition $\ref{linear mild solution H}$, Definition $\ref{linear classical solution H}$ and Definition $\ref{linear strict solution H}$.
\end{remark}
In section $\ref{9.16Application. The well-posedness of the fractional dispersive equation}$, we shall show that these theorems are applicable to the very general fractional dispersive equation
\begin{equation}
\begin{cases}
D_t^\alpha u-iP(D)u-iqu-iVu+F(u)=0,\quad&x\in\mathbb{R}^n,\;t>0\\
u(0,x)=u_0(x),\quad&x\in\mathbb{R}^n
\end{cases}.
\label{9.1 very general dispersive equation motivated}
\end{equation}
Here $q\in L^2(\mathbb{R}^n)$ and $V\in L^\infty(\mathbb{R}^n)$ are both real-valued functions. $P(D)$ is defined via its real symbol, that is, $P(D)u=\mathscr{F}^{-1}\left(P(\xi)\mathscr{F}u\right)$ and $P(\xi)\in C\left(\mathbb{R}^n;\mathbb{R}\right)$ behaves like $\left\lvert\xi\right\rvert^m (m>\frac{n}{2})$ when $\lvert\xi\rvert\to\infty$. Here $\mathscr{F}$ denotes the Fourier transform and $\mathscr{F}^{-1}$ denotes the inverse Fourier transform. Note that no assumption is made on the behaviour of $P(\xi)$ for small $\xi$ except continuity. For some results in the integer order case ($\alpha=1$), you can see Constantin, Saut\cite{Local-smoothing-properties-of-dispersive-equations} and Kenig, Ponce, Vega\cite{Oscillatory-Integrals-and-Regularity-of-Dispersive-Equations}.
\begin{remark}
Specifically, as is easily seen that $(\ref{9.1 very general dispersive equation motivated})$ is the general case of some well-known different kinds of fractional disperve equations such as
\begin{align}
&iD_t^\alpha u+\left(-\Delta\right)^\beta u+q(x)u+V(x)u+\lambda\left\lvert u\right\rvert^{p-1}u=0,\;x\in\mathbb{R}^n,\;t>0,\label{space time fractional Schrodinger equation motivated}\\
&D_t^\alpha u+\partial_x^3u+u^m\partial_xu=0,\quad x\in\mathbb{R},\;t>0,\label{time fractional KdV equation motivated}\\
&D_t^\alpha u+H\partial_x^2u+u^m\partial_xu=0,\quad x\in\mathbb{R},\;t>0.\label{time fractional BO equation motivatied}
\end{align}
In $(\ref{space time fractional Schrodinger equation motivated})$, $\left(-\Delta\right)^\beta$ denotes the fractional Laplacian whose definition is $\left(-\Delta\right)^\beta u=\mathscr{F}^{-1}\left(\left\lvert\xi\right\rvert^{2\beta}\mathscr{F}u\right)$. In $(\ref{time fractional BO equation motivatied})$, $H$ denotes the Hilbert trasfrom whose definition is $Hu=\mathscr{F}^{-1}\left(i\sgn(\xi)\mathscr{F}u\right)$. Actually these equations have been studied by many authors but have not been studied in a more general and abstract way. $(\ref{time fractional KdV equation motivated})$ is called the time fractional m-gKdV equation and $(\ref{time fractional BO equation motivatied})$ is called the time fractional modified Benjamin-Ono equation (mBO equation). The researches on $(\ref{time fractional KdV equation motivated})$ and $(\ref{time fractional BO equation motivatied})$ mainly focused on solving them by variational iteration method\cite{Variational-iteration-method-for-solving-the-space-and-time-fractional-KdV-equation}, Adomian decomposition method\cite{Application-of-homotopy-perturbation-method-to-fractional-IVPs} and symmetry analysis\cite{Lie-symmetry-analysis-of-the-time-fractional-KdV-type-equation} and so on. Several works have been devoted to the well-posed problem for $(\ref{time fractional KdV equation motivated})$ and $(\ref{time fractional BO equation motivatied})$ in the integer order case ($\alpha=1$) which you can see \cite{On-the-Korteweg-de-Vries-equation, Well-Posedness-of-the-Initial-Value-Problem-for-the-Korteweg-de-Vries-Equation,On-the-local-well-posedness-of-the-Benjamin-Ono-and-modified-Benjamin-Ono-equations,On-the-global-well-posedness-of-the-Benjamin-Ono-equation}. There are much more studies on $(\ref{space time fractional Schrodinger equation motivated})$ which is called the space-time fractional nonlinear Schr\"odinger equation with static potential introduced by Achar, Yale, Hanneken\cite{Time-Fractional-Schrodinger-Equation-Revisited} in the case $q,V=0$ or $q=0$. Su, Zhao, Li\cite{Local-well-posedness-of-semilinear-space-time-fractional-Schrodinger-equation} have studied the local well-posedness of it by estimating the fundamental solution using the properties of $H$-functions. If $\beta=1$, it reduces to the time fractional nonlinear Schr\"odinger equation. Peng, Zhou, Ahmad\cite{The-well-posedness-for-fractional-nonlinear-Schrodinger-equations} have studied the global well-posedness of it by the decay estimates of the solution. Wang, Zhou, Wei\cite{Fractional-Schrodinger-equations-with-potential-and-optimal-controls} have studied the global well-posedness and some dynamical properties of it in a bounded domain. In particular, the integer order case ($\alpha=1$, $\beta=1$) has been studied extensively by mathematicians such as Kato\cite{On-nonlinear-Schrodinger-equations,On-nonlinear-Schrodinger-equations-II-HS-solutions-and-unconditional-well-posedness}, Cazenave\cite{Semilinear-Schrodinger-equations}, Ginibre, Velo\cite{On-a-class-of-nonlinear-Schrodinger-equations-I-The-Cauchy-problem-general-case,The-global-Cauchy-problem-for-the-nonlinear-Schrodinger-equation-revisited}, Bourgain\cite{Hyperbolic-Equations-and-Frequency-Interactions} and so on and the researches of the well-posedness of the space fractional case ($\alpha=1$) which is introduced by Laskin\cite{Fractional-Schrodinger-equation,Fractional-quantum-mechanics, Fractional-quantum-mechanics-and-Levy-path-integrals,Fractals-and-quantum-mechanics} you can see Guo, Han, Xin\cite{Existence-of-the-global-smooth-solution-to-the-period-boundary-value-problem-of-fractional-nonlinear-Schrodinger-equation}, Guo, Huo\cite{Global-Well-Posedness-for-the-Fractional-Nonlinear-Schrodinger-Equation} and Hong, Sire\cite{On-Fractional-Schrodinger-Equations-in-sobolev-spaces}. In addition, here are some more differences about the time fractionalisation of the Schr\"odinger equation whether we should fractionalize the constant $i$. In fact, Naber\cite{Time-fractional-Schrodinger-equation} use Wick rotation to raise a fractional power of order $\alpha$ of $i$ which turns out to be the classical Schr\"odinger equations with a time dependent Hamiltonian and Grande\cite{Space-Time-Fractional-Nonlinear-Schrodinger-Equation} studied the local well-posedness and local smoothing properties of it.
\end{remark}
In Section $\ref{9.17Linear estimate. The well-posedness of the linear equation}$ we will give some required estimates for the linear solution operators. In Appendix $\ref{9.1Fractionl integral and derivative definition appendix}$ and $\ref{9.17On the Mittag-Leffler functions}$ we will give some brief introduction of the fractional integrals and fractional derivatives and the Mittag-Leffler function. In Appendix $\ref{9.17The perturbation and the spectral theorem of the selfadjoint operators}$ the perturbation of the selfadjoint operators will be stated and we will prove a general spectral theorem of the selfadjoint operators for the purpose of estimating the linear solution operators and proving Theorem $\ref{unique mild solution X}$ to Theorem $\ref{unique global solution X}$. In Appendix $\ref{Some further results of the linear}$ some futher results of the linear form of $(\ref{nonlinear Schrodinger equation Hilbert})$ will be given such as H\"older regularities and asymptotic behaviors.
\subsection{Notations}
The following notations are used without particular comments.
\begin{align*}
&L_T^\infty H=L^\infty\left((0,T);H\right),\quad L_T^\infty D(A)=L^\infty\left((0,T);D(A)\right),\\
&C_T^\alpha H=C^\alpha\left([0,T];H\right),\quad C_{[\delta,T]}^\alpha H=C^\alpha\left([\delta,T];H\right),\\
&L_t^\infty H=L^\infty\left((0,\infty);H\right),\quad L_t^\infty D(A)=L^\infty\left((0,\infty);D(A)\right),\\
&L_{(T_1,T_2)}^\infty H=L^\infty\left((T_1,T_2);H\right),\quad L_{(T_1,T_2)}^\infty D(A)=L^\infty\left((T_1,T_2);D(A)\right).
\end{align*}

We denote by $a\lesssim b$ if there exists a positive number $C$ which is independent on $\varepsilon$ (see the definition of $C_q[0,\infty)$), $T$ (local in time), the norm of the initial data ($\rho(\cdot)$,$\varrho(\cdot)$) and the essential upper bound of $\left\lVert u(t)\right\rVert_{D(A)}$, $\left\lVert v(t)\right\rVert_{D(A)}$ (see Assumption $\ref{AssumptionA'}$) such that $a\leq Cb$. We denote by $a\sim b$ if $b\lesssim a\lesssim b$. We say $u$ is in a ball with radius $R$ of $Z$ if $u\in Z$ satisfies $\left\lVert u\right\rVert_Z\leq R$. We denote by $*$ the convolution in time, that is,
\[ u(t)*v(t)=\int_0^tu(t-\tau)v(\tau)d\tau. \]
We denote by $\vee$ the maximum and $\wedge$ the minimum.
\label{9.1 subsection Notations}
\section{Linear estimate. The well-posedness of the linear equation}
\label{9.17Linear estimate. The well-posedness of the linear equation}
We will call it the linear $(\ref{nonlinear Schrodinger equation Hilbert})$ if $F(u)=F(t)$ in $(\ref{nonlinear Schrodinger equation Hilbert})$. In this section, we shall give some estimates of the solution operator to the linear $(\ref{nonlinear Schrodinger equation Hilbert})$ and consider the well-posedness of it. More results of the linear $(\ref{nonlinear Schrodinger equation Hilbert})$ will be given in Appendix $\ref{Some further results of the linear}$. By the work of Zhou, Peng, Huang\cite{Duhamels-formula-for-time-fractional-Schrodinger-equations}, the solution of the linear $(\ref{nonlinear Schrodinger equation Hilbert})$ is given by
\begin{equation}
u(t)=S_tx+iGF(t)
\label{mild solution1}
\end{equation}
where
\[ Gv(t)=\int_0^tP_{t-\tau}v(\tau)d\tau \]
and
\begin{align*}
&S_t\phi=U\left(a(t,\xi)U^{-1}\phi\right),\quad a(t,\xi)=E_{\alpha,1}\left(ia(\xi)t^\alpha\right),\\
&P_t\phi=U\left(b(t,\xi)U^{-1}\phi\right),\quad b(t,\xi)=t^{\alpha-1}E_{\alpha,\alpha}\left(ia(\xi)t^\alpha\right).
\end{align*}
Note that $\frac{d}{dt}a(t,\xi)=ia(\xi)b(t,\xi)$ and $\frac{d}{dt}b(t,\xi)=t^{\alpha-2}E_{\alpha,\alpha-1}\left(ia(\xi)t^\alpha\right)$ (Theorem $\ref{Derivative Mittag Leffler}$). Their method is based on the spectral theorem of the selfadjoint operator (see appendix $\ref{9.5appendix The spectral theorem of the selfadjoint operator}$) and we can define the mild solution, the classical solution and the strict solution by this way as follows:
\begin{definition}
For $T>0$, let $x\in H$. The function $u\in C\left([0,T];H\right)$ given by $(\ref{mild solution1})$ is the mild solution of the linear $(\ref{nonlinear Schrodinger equation Hilbert})$ on $[0,T]$
\label{linear mild solution H}
\end{definition}
\begin{definition}
For $T>0$, a function $u:[0,T]\to H$ is a classical solution of the linear $(\ref{nonlinear Schrodinger equation Hilbert})$ on $[0,T]$ if $u$ in the class
\[ u\in C\left((0,T];D(A)\right)\cap C\left([0,T];H\right),\mathbf{D}_t^\alpha(u-u(0))\in C\left((0,T];H\right) \]
satisfies the linear $(\ref{nonlinear Schrodinger equation Hilbert})$.
\label{linear classical solution H}
\end{definition}
\begin{definition}
For $T>0$, a function $u:[0,T]\to H$ is a strict solution of the linear $(\ref{nonlinear Schrodinger equation Hilbert})$ on $[0,T]$ if $u$ in the class
\[ u\in C\left([0,T];D(A)\right),\mathbf{D}_t^\alpha(u-u(0))\in C\left([0,T];H\right) \]
satisfies the linear $(\ref{nonlinear Schrodinger equation Hilbert})$.
\label{linear strict solution H}
\end{definition}
Let $\chi_t=\chi_t(\xi)=\chi_{t^\alpha\lvert a(\xi)\rvert\leq M}$ and $\chi_t^c=\chi_t^c(\xi)=1-\chi_t$ where $M$ is large enough. Here $\chi_t$ denotes a smooth function supported on the set $\left\{(t,\xi):t^\alpha\lvert a(\xi)\rvert\leq2M\right\}$ satisfying $\chi_t=1$ if $t^\alpha\lvert a(\xi)\rvert\leq M$ and hence $\chi_t^c=\chi_{t^\alpha\lvert a(\xi)\rvert>2M}$ is a smooth function supported on the set $\left\{(t,\xi):t^\alpha\lvert a(\xi)\rvert>M\right\}$ satisfying $\chi_t^c=1$ if $t^\alpha\lvert a(\xi)\rvert>2M$.

We can now define the following operators:
\begin{align*}
&S_t^l\phi:=U\left(\chi_ta(t,\xi)U^{-1}\phi\right),\quad S_t^h\phi:=U\left(\chi_t^ca(t,\xi)U^{-1}\phi\right),\\
&P_t^l\phi:=U\left(\chi_tb(t,\xi)U^{-1}\phi\right),\quad P_t^h\phi:=U\left(\chi_t^cb(t,\xi)U^{-1}\phi\right),\\
&G^lv(t):=\int_0^tP_{t-\tau}^lv(\tau)d\tau,\quad G^hv(t):=\int_0^tP_{t-\tau}^hv(\tau)d\tau.
\end{align*}
According to Theorem $\ref{Mittag-Leffler function asymptotic expansion}$, it follows that
\begin{align*}
&\chi_t^ca(t,\xi)=\frac{i}{\Gamma(1-\alpha)}\chi_t^ca(\xi)^{-1}t^{-\alpha}+\chi_t^cO\left(\left\lvert a(\xi)\right\rvert^{-2}t^{-2\alpha}\right),\\
&\chi_t^cb(t,\xi)=\frac{1}{\Gamma(-\alpha)}\chi_t^ca(\xi)^{-2}t^{-\alpha-1}+\chi_t^cO\left(\left\lvert a(\xi)\right\rvert^{-3}t^{-2\alpha-1}\right),
\end{align*}
which then implies that
\begin{align*}
&\begin{aligned}
S_t^h\phi&=\frac{i}{\Gamma(1-\alpha)}t^{-\alpha}U\left(a(\xi)^{-1}\chi_t^cU^{-1}\phi\right)+U\left(O\left(\left\lvert a(\xi)\right\rvert^{-2}t^{-2\alpha}\right)\chi_t^cU^{-1}\phi\right)\\
&=:\frac{i}{\Gamma(1-\alpha)}t^{-\alpha}\mathbf{A}_t^{-1}\phi+R_t^S\phi,
\end{aligned}\\
&\begin{aligned}
P_t^h\phi&=\frac{1}{\Gamma(-\alpha)}t^{-\alpha-1}U\left(a(\xi)^{-2}\chi_t^cU^{-1}\phi\right)+U\left(O\left(\left\lvert a(\xi)\right\rvert^{-3}t^{-2\alpha-1}\right)\chi_t^cU^{-1}\phi\right)\\
&=:\frac{1}{\Gamma(-\alpha)}t^{-\alpha-1}\mathbf{A}_t^{-2}\phi+R_t^P\phi,
\end{aligned}\\
&G^hv(t)=\frac{1}{\Gamma(-\alpha)}\int_0^t\left(t-\tau\right)^{-\alpha-1}\mathbf{A}_{t-\tau}^{-2}v(\tau)d\tau+\int_0^tR_{t-\tau}^Pv(\tau)d\tau.
\end{align*}

Note that the following relations hold:
\[ S_t\phi=S_t^l\phi+S_t^h\phi,\quad P_t\phi=P_t^l\phi+P_t^h\phi,\quad Gv(t)=G^lv(t)+G^hv(t). \]

Here we first give some estimates of the operator $\mathbf{A}_t^{-1}$, $\mathbf{A}_t^{-2}$, $R_t^S$ and $R_t^P$.
\begin{lemma}
$\mathbf{A}_t^{-1}$ maps $H$ into $H$ boundedly for every $t\geq0$ with the estimate
\begin{equation}
\left\lVert\mathbf{A}_t^{-1}\phi\right\rVert_H\lesssim t^\alpha\left\lVert\phi\right\rVert_H,
\label{lemma linear new approach1 equation1}
\end{equation}
and $A\mathbf{A}_t^{-1}$ maps $H$ into $H$ boundedly for every $t\geq0$ with the estimate
\begin{equation}
\left\lVert A\mathbf{A}_t^{-1}\phi\right\rVert_H\leq\left\lVert \phi\right\rVert_H.
\label{lemma linear new approach1 equation2}
\end{equation}
\label{lemma linear new approach1}
\end{lemma}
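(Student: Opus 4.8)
The plan is to reduce both estimates to elementary multiplier bounds on $L^2$ via the spectral theorem. Since $A$ is selfadjoint, the spectral theorem supplies a unitary $U$ and a real symbol $a(\xi)$ with $A\phi=U\left(a(\xi)U^{-1}\phi\right)$, so that $U$ is an isometry: $\left\lVert Uf\right\rVert_H=\left\lVert f\right\rVert_{L^2}$ and $\left\lVert U^{-1}\phi\right\rVert_{L^2}=\left\lVert\phi\right\rVert_H$. The single fact that drives everything is that $\chi_t^c$ is supported in $\left\{(t,\xi):t^\alpha\left\lvert a(\xi)\right\rvert>M\right\}$; on this set $\left\lvert a(\xi)\right\rvert>Mt^{-\alpha}$, hence $\left\lvert a(\xi)^{-1}\right\rvert<M^{-1}t^\alpha$. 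In particular $a(\xi)^{-1}\chi_t^c$ is a genuine bounded function, since on the support of $\chi_t^c$ with $t>0$ one has $a(\xi)\neq0$.

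For $(\ref{lemma linear new approach1 equation1})$ I would simply compute, using the isometry of $U$,
\[ \left\lVert\mathbf{A}_t^{-1}\phi\right\rVert_H=\left\lVert a(\xi)^{-1}\chi_t^cU^{-1}\phi\right\rVert_{L^2}\leq\left\lVert a(\xi)^{-1}\chi_t^c\right\rVert_{L^\infty}\left\lVert U^{-1}\phi\right\rVert_{L^2}. \]
Multiplication by an $L^\infty$ function acts on $L^2$ with operator norm equal to its sup norm, and by the support bound $\left\lVert a(\xi)^{-1}\chi_t^c\right\rVert_{L^\infty}\leq M^{-1}t^\alpha$, so the right-hand side is $\leq M^{-1}t^\alpha\left\lVert\phi\right\rVert_H\lesssim t^\alpha\left\lVert\phi\right\rVert_H$, as $M$ is a fixed (large) constant. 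The endpoint $t=0$ is trivial because $\chi_0^c\equiv0$, whence $\mathbf{A}_0^{-1}=0$.

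For $(\ref{lemma linear new approach1 equation2})$ the key step is to verify that $\mathbf{A}_t^{-1}\phi\in D(A)$ for every $\phi\in H$, so that the left-hand side makes sense, and then to observe a clean cancellation. Indeed $a(\xi)\cdot a(\xi)^{-1}\chi_t^cU^{-1}\phi=\chi_t^cU^{-1}\phi\in L^2$, which is exactly the condition characterizing $D(A)$ in the spectral representation; hence $\mathbf{A}_t^{-1}\phi\in D(A)$ and
\[ A\mathbf{A}_t^{-1}\phi=U\left(a(\xi)\,a(\xi)^{-1}\chi_t^cU^{-1}\phi\right)=U\left(\chi_t^cU^{-1}\phi\right). \]
Using the isometry of $U$ once more together with $0\leq\chi_t^c\leq1$ gives $\left\lVert A\mathbf{A}_t^{-1}\phi\right\rVert_H=\left\lVert\chi_t^cU^{-1}\phi\right\rVert_{L^2}\leq\left\lVert U^{-1}\phi\right\rVert_{L^2}=\left\lVert\phi\right\rVert_H$, which is the claimed bound with constant one.

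I do not expect a serious obstacle: the whole statement is a functional-calculus computation. The only points requiring care are (i) interpreting $a(\xi)^{-1}\chi_t^c$ as the function that vanishes off the support of $\chi_t^c$, so that the apparent singularity of $a(\xi)^{-1}$ at $a(\xi)=0$ never enters, and (ii) the domain check $\mathbf{A}_t^{-1}\phi\in D(A)$ needed before writing $A\mathbf{A}_t^{-1}\phi$; both are settled by the support property of $\chi_t^c$. The analogous estimates for $\mathbf{A}_t^{-2}$ and for the remainder operators $R_t^S,R_t^P$ built from the $O(\cdot)$ symbols would follow by precisely the same multiplier argument, now using $\left\lvert a(\xi)^{-2}\chi_t^c\right\rvert\leq M^{-2}t^{2\alpha}$ and the corresponding sup bounds on the $O$-terms.
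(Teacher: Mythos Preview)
Your argument is correct and follows exactly the paper's route: both estimates are proved as $L^\infty$ multiplier bounds via the spectral representation, using $\left\lvert\chi_t^c a(\xi)^{-1}\right\rvert\lesssim t^\alpha$ for the first and $\left\lvert\chi_t^c\right\rvert\leq 1$ for the second. The only difference is that you spell out the domain check $\mathbf{A}_t^{-1}\phi\in D(A)$ and the endpoint $t=0$, which the paper leaves implicit.
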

\begin{proof}[Proof]
$(\ref{lemma linear new approach1 equation1})$ can be easily proved using the fact $\left\lvert\chi_t^ca(\xi)^{-1}\right\rvert\lesssim t^\alpha$ and $(\ref{lemma linear new approach1 equation2})$ can be easily proved using the fact $\left\lvert\chi_t^c\right\rvert\leq1$.
\end{proof}
By the same way, we can easily proove the following lemmas.
\begin{lemma}
$\mathbf{A}_t^{-2}$ maps $H$ into $H$ boundedly for every $t\geq0$ with the estimate
\begin{equation}
\left\lVert\mathbf{A}_t^{-2}\phi\right\rVert_H\lesssim t^{2\alpha}\left\lVert\phi\right\rVert_H,
\label{lemma linear new approach3 equation1}
\end{equation}
and $A\mathbf{A}_t^{-2}$ maps $H$ into $H$ boundedly for every $t\geq0$ with the estimate
\begin{equation}
\left\lVert A\mathbf{A}_t^{-2}\phi\right\rVert_H\lesssim t^\alpha\left\lVert\phi\right\rVert_H.
\label{lemma linear new approach3 equation2}
\end{equation}
\label{lemma linear new approach3}
\end{lemma}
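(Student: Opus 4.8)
The plan is to proceed exactly as in the proof of Lemma \ref{lemma linear new approach1}, since $\mathbf{A}_t^{-2}$ has the same structure as $\mathbf{A}_t^{-1}$ but with an extra factor of $a(\xi)^{-1}$ in its symbol. The two ingredients are the unitarity of $U$ and $U^{-1}$ on $H$ (so that any multiplication operator $U\left(m(\xi)U^{-1}\cdot\right)$ has operator norm equal to the essential supremum of $\left\lvert m(\xi)\right\rvert$), and the pointwise control of the relevant symbols on the support of $\chi_t^c$. The crucial observation is that $\chi_t^c$ is supported where $t^\alpha\left\lvert a(\xi)\right\rvert>M$, so that $\left\lvert a(\xi)^{-1}\right\rvert<M^{-1}t^\alpha\lesssim t^\alpha$ there; squaring this gives $\left\lvert a(\xi)^{-2}\right\rvert\lesssim t^{2\alpha}$ on the support.

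For the first estimate $(\ref{lemma linear new approach3 equation1})$, I would note that the symbol of $\mathbf{A}_t^{-2}$ is $a(\xi)^{-2}\chi_t^c$, and by the above the bound $\left\lvert\chi_t^ca(\xi)^{-2}\right\rvert\lesssim t^{2\alpha}$ holds uniformly in $\xi$. Since $U^{-1}$ and $U$ are isometries, $\left\lVert\mathbf{A}_t^{-2}\phi\right\rVert_H=\left\lVert a(\xi)^{-2}\chi_t^cU^{-1}\phi\right\rVert_H\lesssim t^{2\alpha}\left\lVert U^{-1}\phi\right\rVert_H=t^{2\alpha}\left\lVert\phi\right\rVert_H$, which is the claim.

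For the second estimate $(\ref{lemma linear new approach3 equation2})$, I would use that $A$ is represented by multiplication by $a(\xi)$ under the spectral transform $U$, so that $A\mathbf{A}_t^{-2}\phi=U\left(a(\xi)\cdot a(\xi)^{-2}\chi_t^cU^{-1}\phi\right)=U\left(a(\xi)^{-1}\chi_t^cU^{-1}\phi\right)$. The symbol is now $a(\xi)^{-1}\chi_t^c$, which by the support property satisfies $\left\lvert\chi_t^ca(\xi)^{-1}\right\rvert\lesssim t^\alpha$ — this is precisely the bound already exploited in Lemma \ref{lemma linear new approach1}. Applying the isometry of $U,U^{-1}$ once more yields $\left\lVert A\mathbf{A}_t^{-2}\phi\right\rVert_H\lesssim t^\alpha\left\lVert\phi\right\rVert_H$.

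There is no substantive obstacle here; the only point requiring a word of care is legitimacy of the composition $A\mathbf{A}_t^{-2}$, i.e.\ that $\mathbf{A}_t^{-2}\phi$ actually lies in $D(A)$ so that applying $A$ is meaningful. This is automatic: in the spectral representation $D(A)$ consists of those elements whose image under $U^{-1}$ is multiplied into $L^2$ by $a(\xi)$, and since $a(\xi)\cdot a(\xi)^{-2}\chi_t^c=a(\xi)^{-1}\chi_t^c$ is a bounded multiplier, the product stays in $H$ for every $\phi\in H$. Hence $\mathbf{A}_t^{-2}$ maps $H$ into $D(A)$, and both mapping properties follow as above.
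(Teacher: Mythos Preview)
Your proposal is correct and follows exactly the approach the paper intends: the paper does not even write out a separate proof for this lemma, instead stating that it is proved ``by the same way'' as Lemma~\ref{lemma linear new approach1}, i.e.\ via the symbol bounds $\left\lvert\chi_t^ca(\xi)^{-2}\right\rvert\lesssim t^{2\alpha}$ and $\left\lvert\chi_t^ca(\xi)^{-1}\right\rvert\lesssim t^\alpha$ together with the unitarity of $U$. Your extra remark that $\mathbf{A}_t^{-2}\phi\in D(A)$ (so that $A\mathbf{A}_t^{-2}$ is well defined) is a welcome clarification; one small notational slip is that the intermediate norm $\left\lVert a(\xi)^{-2}\chi_t^cU^{-1}\phi\right\rVert$ should be taken in $L^2(\Omega)$ rather than in $H$.
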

\begin{lemma}
$R_t^S$ maps $H$ into $H$ boundedly for every $t\geq0$ with the estimate
\begin{equation}
\left\lVert R_t^S\phi\right\rVert_H\lesssim\left\lVert\phi\right\rVert_H,
\label{lemma linear new approach2 equation1}
\end{equation}
and $AR_t^S$ maps $H$ into $H$ boundedly for every $t>0$ with the estimate
\begin{equation}
\left\lVert AR_t^S\phi\right\rVert_H\lesssim t^{-\alpha}\left\lVert\phi\right\rVert_H.
\label{lemma linear new approach2 equation2}
\end{equation}
\label{lemma linear new approach2}
\end{lemma}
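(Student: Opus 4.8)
The plan is to exploit that $R_t^S$ is, in the spectral representation furnished by the unitary $U$, nothing but a multiplication operator, so that both claims reduce to an $L^\infty$ bound on its symbol. Concretely, since $U$ and $U^{-1}$ are isometries, for any bounded symbol $m=m(\xi)$ one has $\left\lVert U\left(m\,U^{-1}\phi\right)\right\rVert_H = \left\lVert m\,U^{-1}\phi\right\rVert \le \left\lVert m\right\rVert_{L^\infty}\left\lVert U^{-1}\phi\right\rVert = \left\lVert m\right\rVert_{L^\infty}\left\lVert\phi\right\rVert_H$. This is precisely the mechanism already used for Lemma $\ref{lemma linear new approach1}$, and I would reuse it verbatim.

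First I would read off the symbol of $R_t^S$, which by construction is $O\left(\left\lvert a(\xi)\right\rvert^{-2}t^{-2\alpha}\right)\chi_t^c$. The decisive input is the support property of $\chi_t^c$: on $\operatorname{supp}\chi_t^c$ one has $t^\alpha\left\lvert a(\xi)\right\rvert>M$, hence $\left\lvert a(\xi)\right\rvert^{-1}\lesssim t^\alpha$. Combining this with the bound coming from the $O$-term gives $\left\lvert O\left(\left\lvert a(\xi)\right\rvert^{-2}t^{-2\alpha}\right)\chi_t^c\right\rvert\lesssim \left\lvert a(\xi)\right\rvert^{-2}t^{-2\alpha}\lesssim t^{2\alpha}\,t^{-2\alpha}=1$, which is the $L^\infty$ bound yielding $(\ref{lemma linear new approach2 equation1})$.

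For the second estimate I would note that, in the spectral picture, $A$ acts as multiplication by $a(\xi)$, so $AR_t^S$ carries the symbol $a(\xi)\,O\left(\left\lvert a(\xi)\right\rvert^{-2}t^{-2\alpha}\right)\chi_t^c$. Using the same support inequality, $\left\lvert a(\xi)\right\rvert\cdot\left\lvert a(\xi)\right\rvert^{-2}t^{-2\alpha}\chi_t^c=\left\lvert a(\xi)\right\rvert^{-1}t^{-2\alpha}\chi_t^c\lesssim t^\alpha\,t^{-2\alpha}=t^{-\alpha}$. Since this product symbol is bounded for $t>0$, it in particular lies in $L^\infty$, which simultaneously shows $R_t^S\phi\in D(A)$ and gives $(\ref{lemma linear new approach2 equation2})$; the restriction to $t>0$ is exactly what the factor $t^{-\alpha}$ forces.

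The only point requiring genuine care — and the step I expect to be the main, if modest, obstacle — is the rigorous meaning of the $O$-notation in the symbol. The remainder term originates from the asymptotic expansion of the Mittag-Leffler function (Theorem $\ref{Mittag-Leffler function asymptotic expansion}$), which is valid only for large argument $\left\lvert ia(\xi)t^\alpha\right\rvert$; I would therefore emphasize that choosing the cutoff parameter $M$ large enough confines $\chi_t^c$ to the region where the expansion holds, so that the $O$-constant is a genuine fixed constant, absorbed into $\lesssim$, and the pointwise bound $\left\lvert O\left(\left\lvert a(\xi)\right\rvert^{-2}t^{-2\alpha}\right)\right\rvert\le C\left\lvert a(\xi)\right\rvert^{-2}t^{-2\alpha}$ holds uniformly on $\operatorname{supp}\chi_t^c$. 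With that justification in place the two displays above are immediate.
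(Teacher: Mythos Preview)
Your proposal is correct and follows exactly the approach the paper intends: the paper simply writes ``By the same way, we can easily prove the following lemmas'' after the proof of Lemma~\ref{lemma linear new approach1}, and your argument is precisely the spelled-out version of that same multiplier bound, using $\left\lvert a(\xi)\right\rvert^{-1}\chi_t^c\lesssim t^\alpha$ on the support of $\chi_t^c$. Your extra remark on the uniformity of the $O$-constant once $M$ is chosen large is a welcome clarification but introduces no new idea beyond the paper's.
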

\begin{lemma}
$R_t^P$ maps $H$ into $H$ boundedly for every $t>0$ with the estimate
\begin{equation}
\left\lVert R_t^P\phi\right\rVert_H\lesssim t^{\alpha-1}\left\lVert\phi\right\rVert_H.
\label{lemma linear new approach4 equation1}
\end{equation}
\label{lemma linear new approach4}
\end{lemma}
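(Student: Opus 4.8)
The plan is to follow exactly the template used for Lemmas \ref{lemma linear new approach1}--\ref{lemma linear new approach2}, reducing the operator bound to a pointwise bound on its spectral multiplier. Recall that $U$ is the unitary operator furnished by the spectral theorem, so that $\left\lVert Ug\right\rVert_H=\left\lVert g\right\rVert$ for every $g$ in the model $L^2$-space. Since by definition
\[ R_t^P\phi=U\left(O\left(\left\lvert a(\xi)\right\rvert^{-3}t^{-2\alpha-1}\right)\chi_t^cU^{-1}\phi\right), \]
the $H$-norm of $R_t^P\phi$ equals the $L^2$-norm of $U^{-1}\phi$ multiplied by the symbol $O\left(\left\lvert a(\xi)\right\rvert^{-3}t^{-2\alpha-1}\right)\chi_t^c$. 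It therefore suffices to produce a uniform (in $\xi$) pointwise bound on this symbol, whence the $L^2$-boundedness of multiplication by a bounded function together with the isometry property of $U$ gives the claim.

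First I would pin down the one ingredient that is not purely algebraic: the implicit constant in the error term $O\left(\left\lvert a(\xi)\right\rvert^{-3}t^{-2\alpha-1}\right)$ is uniform on $\s\chi_t^c$. This is exactly what the large-argument asymptotic expansion of Theorem \ref{Mittag-Leffler function asymptotic expansion} provides, because $\chi_t^c$ is supported in the region $t^\alpha\left\lvert a(\xi)\right\rvert>M$ with $M$ chosen large, which is precisely where the expansion of $E_{\alpha,\alpha}$ is valid. Consequently, on the support of $\chi_t^c$ the symbol may be treated as $\left\lvert a(\xi)\right\rvert^{-3}t^{-2\alpha-1}$ up to a fixed multiplicative constant.

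The decisive step is then the same support estimate already exploited in Lemma \ref{lemma linear new approach1}: on $\s\chi_t^c$ one has $t^\alpha\left\lvert a(\xi)\right\rvert>M$, hence $\left\lvert a(\xi)\right\rvert^{-1}\lesssim t^\alpha$ and so $\left\lvert a(\xi)\right\rvert^{-3}\lesssim t^{3\alpha}$. Combining this with the explicit factor $t^{-2\alpha-1}$ gives
\[ \left\lvert O\left(\left\lvert a(\xi)\right\rvert^{-3}t^{-2\alpha-1}\right)\chi_t^c\right\rvert\lesssim t^{3\alpha}\cdot t^{-2\alpha-1}=t^{\alpha-1} \]
uniformly in $\xi$. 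Feeding this pointwise bound into the multiplier representation and using $\left\lVert U^{-1}\phi\right\rVert=\left\lVert\phi\right\rVert_H$ yields $\left\lVert R_t^P\phi\right\rVert_H\lesssim t^{\alpha-1}\left\lVert\phi\right\rVert_H$, which is $(\ref{lemma linear new approach4 equation1})$. Because the computation is a verbatim repetition of the earlier multiplier estimates, I anticipate no real obstacle; the only point needing care is the uniformity of the $O$-constant on $\s\chi_t^c$ noted above, which is what legitimises replacing the symbol by its pointwise bound.
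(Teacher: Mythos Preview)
Your proposal is correct and follows exactly the same approach as the paper. The paper's own proof is simply ``by the same way'' as Lemma \ref{lemma linear new approach1}, i.e., bounding the spectral multiplier pointwise using the support condition $\left\lvert\chi_t^c a(\xi)^{-1}\right\rvert\lesssim t^\alpha$ and then invoking the unitarity of $U$; you have merely spelled out this computation explicitly, including the useful remark on the uniformity of the $O$-constant coming from Theorem \ref{Mittag-Leffler function asymptotic expansion}.
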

\begin{lemma}
Let $\phi\in H$. For any $t,s>0$, we have
\begin{equation}
\left\lVert t^{-\alpha}\mathbf{A}_t^{-1}\phi-s^{-\alpha}\mathbf{A}_s^{-1}\phi\right\rVert_H\lesssim\left(1+\left(t\wedge s\right)^{-1}\right)\lvert t-s\rvert\left\lVert\phi\right\rVert_H,
\label{lemma linear new approach bilinear1 equation1}
\end{equation}
and
\begin{equation}
\left\lVert t^{-\alpha}A\mathbf{A}_t^{-1}\phi-s^{-\alpha}A\mathbf{A}_s^{-1}\phi\right\rVert_H\lesssim\left(\left\lvert t^{-\alpha}-s^{-\alpha}\right\rvert+\left\lvert t^{1-\alpha}-s^{1-\alpha}\right\rvert\right)\left\lVert\phi\right\rVert_H.
\label{lemma linear new approach bilinear1 equation2}
\end{equation}
If moreover $\phi\in D(A)$, we have
\begin{equation}
\left\lVert t^{-\alpha}\mathbf{A}_t^{-1}\phi-s^{-\alpha}\mathbf{A}_s^{-1}\phi\right\rVert_H\lesssim\left(\left\lvert t^\alpha-s^\alpha\right\rvert+\left\lvert t^{\alpha+1}-s^{\alpha+1}\right\rvert\right)\left\lVert\phi\right\rVert_{D(A)}.
\label{lemma linear new approach bilinear1 equation3}
\end{equation}
\label{lemma linear new approach bilinear1}
\end{lemma}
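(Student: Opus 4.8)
The plan is to reduce all three inequalities to pointwise estimates on the Fourier multiplier symbols and then to control their variation in the time variable by the fundamental theorem of calculus, in the same spirit as Lemma \ref{lemma linear new approach1}--Lemma \ref{lemma linear new approach4}. Since $U$ is unitary and diagonalizes $A$, the operator $\mathbf{A}_t^{-1}$ acts as multiplication by $a(\xi)^{-1}\chi_t^c(\xi)$ and $A\mathbf{A}_t^{-1}$ acts as multiplication by $\chi_t^c(\xi)$, so that the left-hand side of each inequality equals $\left\lVert m(t,s,\cdot)\,U^{-1}\phi\right\rVert_{L^2(d\mu)}$, where $m(t,s,\xi)$ is the corresponding symbol difference and $d\mu$ is the measure of the spectral representation. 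Because $a(\xi)$ is real-valued we may take $\lambda:=\left\lvert a(\xi)\right\rvert$ and write $\chi_t^c(\xi)=\psi(t^\alpha\lambda)$ for the fixed smooth profile $\psi$ with $\psi\equiv0$ on $[0,M]$, $\psi\equiv1$ on $[2M,\infty)$, $0\le\psi\le1$ and $\left\lvert\psi'\right\rvert\lesssim1$. Thus $(\ref{lemma linear new approach bilinear1 equation1})$ and $(\ref{lemma linear new approach bilinear1 equation2})$ reduce to a bound on $\left\lvert m(t,s,\xi)\right\rvert$ uniform in $\xi$, whereas $(\ref{lemma linear new approach bilinear1 equation3})$ reduces to a bound carrying one extra factor $(1+\lambda^2)^{1/2}$, since $\int(1+\lambda^2)\left\lvert U^{-1}\phi\right\rvert^2d\mu=\left\lVert\phi\right\rVert_{D(A)}^2$.

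The crucial input is the support dictionary for the cutoff: on $\s\psi$ we have $r^\alpha\lambda>M$, hence $\lambda\gtrsim r^{-\alpha}$ and $\lambda^{-1}\lesssim r^\alpha$, whereas on $\s\psi'$ we have $M<r^\alpha\lambda\le2M$, hence $\lambda\sim r^{-\alpha}$. Introducing the real symbols $F(r,\xi):=\lambda^{-1}r^{-\alpha}\psi(r^\alpha\lambda)$, so that $\left\lvert F(t,\xi)-F(s,\xi)\right\rvert$ is exactly the symbol modulus of $t^{-\alpha}\mathbf{A}_t^{-1}-s^{-\alpha}\mathbf{A}_s^{-1}$, and $G(r,\xi):=r^{-\alpha}\psi(r^\alpha\lambda)$, the symbol of $r^{-\alpha}A\mathbf{A}_r^{-1}$, a direct differentiation gives
\begin{align*}
\partial_rF&=-\alpha\lambda^{-1}r^{-\alpha-1}\psi(r^\alpha\lambda)+\alpha r^{-1}\psi'(r^\alpha\lambda),\\
\partial_rG&=-\alpha r^{-\alpha-1}\psi(r^\alpha\lambda)+\alpha r^{-1}\lambda\psi'(r^\alpha\lambda).
\end{align*}
Using the dictionary, the cutoff-derivative terms are no larger than the smooth main terms: $\left\lvert\alpha r^{-1}\psi'\right\rvert\lesssim r^{-1}$ in $\partial_rF$ and $\left\lvert\alpha r^{-1}\lambda\psi'\right\rvert\lesssim r^{-\alpha-1}$ in $\partial_rG$. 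Hence $\left\lvert\partial_rF\right\rvert\lesssim r^{-1}$ and $\left\lvert\partial_rG\right\rvert\lesssim r^{-\alpha-1}$; moreover, since $r^{-\alpha}\lesssim\lambda\le(1+\lambda^2)^{1/2}$ on $\s\psi$, the first bound upgrades to $\left\lvert\partial_rF\right\rvert\lesssim r^{\alpha-1}(1+\lambda^2)^{1/2}$.

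Integrating these pointwise derivative bounds in $r$ from $t\wedge s$ to $t\vee s$ yields the three symbol estimates. For $(\ref{lemma linear new approach bilinear1 equation1})$,
\[ \left\lvert F(t,\xi)-F(s,\xi)\right\rvert\lesssim\int_{t\wedge s}^{t\vee s}r^{-1}dr=\log\frac{t\vee s}{t\wedge s}\le\frac{\lvert t-s\rvert}{t\wedge s}\le\left(1+(t\wedge s)^{-1}\right)\lvert t-s\rvert \]
uniformly in $\xi$. For $(\ref{lemma linear new approach bilinear1 equation2})$, $\left\lvert G(t,\xi)-G(s,\xi)\right\rvert\lesssim\int_{t\wedge s}^{t\vee s}r^{-\alpha-1}dr\sim\left\lvert t^{-\alpha}-s^{-\alpha}\right\rvert$, which is dominated by the stated right-hand side. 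For $(\ref{lemma linear new approach bilinear1 equation3})$, $\left\lvert F(t,\xi)-F(s,\xi)\right\rvert\lesssim(1+\lambda^2)^{1/2}\int_{t\wedge s}^{t\vee s}r^{\alpha-1}dr\sim(1+\lambda^2)^{1/2}\left\lvert t^\alpha-s^\alpha\right\rvert$; squaring, integrating against $\left\lvert U^{-1}\phi\right\rvert^2d\mu$ and using $\int(1+\lambda^2)\left\lvert U^{-1}\phi\right\rvert^2d\mu=\left\lVert\phi\right\rVert_{D(A)}^2$ gives the claim, with the stated right-hand side again providing extra slack. All implied constants depend only on $\alpha$, $M$ and $\left\lVert\psi\right\rVert_{C^1}$, consistent with the convention for $\lesssim$.

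I expect the main obstacle to be the differentiated cutoff term $\psi'(r^\alpha\lambda)$: the chain rule costs an extra factor of $\lambda$, which is unbounded in $\xi$. The resolution, and the crux of the argument, is that $\psi'$ is supported precisely on the shell $\lambda\sim r^{-\alpha}$, so this $\lambda$ is traded for $r^{-\alpha}$ and the cutoff term is absorbed into the smooth main term; after that the estimates collapse to elementary one-variable integrals. One should also keep in mind that $\chi_t^c$ and $\chi_s^c$ have different supports, but this asymmetry is handled automatically by differentiating the single symbol $F(r,\xi)$ (respectively $G(r,\xi)$) and integrating in $r$.
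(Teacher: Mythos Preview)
Your proof is correct and follows essentially the same strategy as the paper's: both reduce to pointwise symbol estimates, differentiate the symbol in the time variable, exploit the support dictionary $\lambda\gtrsim r^{-\alpha}$ on $\operatorname{supp}\psi$ and $\lambda\sim r^{-\alpha}$ on $\operatorname{supp}\psi'$ to absorb the cutoff-derivative term, and integrate. The only cosmetic differences are that you bundle the factor $\lambda^{-1}$ into the symbol $F$ from the outset (the paper factors $a(\xi)^{-1}$ out at the end) and that your intermediate bounds are slightly sharper---you obtain $\left\lvert t^{-\alpha}-s^{-\alpha}\right\rvert$ and $\left\lvert t^\alpha-s^\alpha\right\rvert$ alone, whereas the paper picks up the additional terms $\left\lvert t^{1-\alpha}-s^{1-\alpha}\right\rvert$ and $\left\lvert t^{\alpha+1}-s^{\alpha+1}\right\rvert$ appearing in the lemma.
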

\begin{proof}[Proof]
Note that $\left\lvert t^{-\alpha-1}\chi_t^c\right\rvert\lesssim t^{-1}\left\lvert a(\xi)\right\rvert$, $\left\lvert t^{-\alpha-1}\chi_t^c\right\rvert\lesssim t^{\alpha-1}\left\lvert a(\xi)\right\rvert^2$, $\left\lvert t^{-\alpha}\frac{d}{dt}\chi_t^c\right\rvert\lesssim\lvert a(\xi)\rvert$ and $\left\lvert t^{-\alpha}\frac{d}{dt}\chi_t^c\right\rvert\lesssim t^\alpha\left\lvert a(\xi)\right\rvert^2$. It follows that
\begin{align*}
\left\lvert t^{-\alpha}\chi_t^c-s^{-\alpha}\chi_s^c\right\rvert&=\left\lvert\int_s^t\frac{d}{d\tau}\left(\tau^{-\alpha}\chi_\tau^c\right)d\tau\right\rvert\\
&=\left\lvert\int_s^t-\alpha\tau^{-\alpha-1}\chi_\tau^c+\tau^{-\alpha}\frac{d}{d\tau}\chi_\tau^cd\tau\right\rvert\\
&\lesssim\left\lvert\int_s^t\lvert a(\xi)\rvert\tau^{-1}+\lvert a(\xi)\rvert d\tau\right\rvert\\
&\leq\left(1+\left(t\wedge s\right)^{-1}\right)\lvert a(\xi)\rvert\lvert t-s\rvert
\end{align*}
and
\begin{align*}
\left\lvert t^{-\alpha}\chi_t^c-s^{-\alpha}\chi_s^c\right\rvert&=\left\lvert\int_s^t-\alpha\tau^{-\alpha-1}\chi_\tau^c+\tau^{-\alpha}\frac{d}{d\tau}\chi_\tau^cd\tau\right\rvert\\
&\lesssim\left\lvert\int_s^t\tau^{\alpha-1}\left\lvert a(\xi)\right\rvert^2+\tau^\alpha\left\lvert a(\xi)\right\rvert^2d\tau\right\rvert\\
&\lesssim\left(\left\lvert t^\alpha-s^\alpha\right\rvert+\left\lvert t^{\alpha+1}-s^{\alpha+1}\right\rvert\right)\left\lvert a(\xi)\right\rvert^2.
\end{align*}
Then we have
\begin{align*}
\left\lVert t^{-\alpha}\mathbf{A}_t^{-1}\phi-s^{-\alpha}\mathbf{A}_s^{-1}\phi\right\rVert_H&=\left\lVert t^{-\alpha}a(\xi)^{-1}\chi_t^cU^{-1}\phi-s^{-\alpha}a(\xi)^{-1}\chi_s^cU^{-1}\phi\right\rVert_{L^2(\Omega)}\\
&\lesssim\left(1+\left(t\wedge s\right)^{-1}\right)\lvert t-s\rvert\left\lVert\phi\right\rVert_H,
\end{align*}
and
\begin{align*}
\left\lVert t^{-\alpha}\mathbf{A}_t^{-1}\phi-s^{-\alpha}\mathbf{A}_s^{-1}\phi\right\rVert_H&=\left\lVert t^{-\alpha}a(\xi)^{-1}\chi_t^cU^{-1}\phi-s^{-\alpha}a(\xi)^{-1}\chi_s^cU^{-1}\phi\right\rVert_{L^2(\Omega)}\\
&\lesssim\left(\left\lvert t^\alpha-s^\alpha\right\rvert+\left\lvert t^{\alpha+1}-s^{\alpha+1}\right\rvert\right)\left\lVert\phi\right\rVert_{D(A)}.
\end{align*}
Moreover, by the estimate
\begin{align*}
\left\lvert t^{-\alpha}\chi_t^c-s^{-\alpha}\chi_s^c\right\rvert&=\left\lvert\int_s^t-\alpha\tau^{-\alpha-1}\chi_\tau^c+\tau^{-\alpha}\frac{d}{d\tau}\chi_\tau^cd\tau\right\rvert\\
&\lesssim\left\lvert\int_s^t\tau^{-\alpha-1}+\tau^{-\alpha}d\tau\right\rvert\\
&\lesssim\left\lvert t^{-\alpha}-s^{-\alpha}\right\rvert+\left\lvert t^{1-\alpha}-s^{1-\alpha}\right\rvert
\end{align*}
we obtain
\begin{align*}
\left\lVert t^{-\alpha}A\mathbf{A}_t^{-1}\phi-s^{-\alpha}A\mathbf{A}_s^{-1}\phi\right\rVert_H&=\left\lVert t^{-\alpha}\chi_t^cU^{-1}\phi-s^{-\alpha}\chi_s^cU^{-1}\phi\right\rVert_{L^2(\Omega)}\\
&\lesssim\left(\left\lvert t^{-\alpha}-s^{-\alpha}\right\rvert+\left\lvert t^{1-\alpha}-s^{1-\alpha}\right\rvert\right)\left\lVert\phi\right\rVert_H.
\end{align*}
\end{proof}
\begin{lemma}
Let $\phi\in H$. For any $t,s>0$, we have
\begin{equation}
\left\lVert t^{-\alpha-1}\mathbf{A}_t^{-2}\phi-s^{-\alpha-1}\mathbf{A}_s^{-2}\phi\right\rVert_H\lesssim\left(\left\lvert t^{\alpha-1}-s^{\alpha-1}\right\rvert+\left\lvert t^\alpha-s^\alpha\right\rvert\right)\left\lVert\phi\right\rVert_H.
\label{8.16lemma linear new approach bilinear3 equation1}
\end{equation}
\label{8.16lemma linear new approach bilinear3}
\end{lemma}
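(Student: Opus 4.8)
The plan is to reproduce, essentially verbatim, the mechanism of Lemma~\ref{lemma linear new approach bilinear1}, with the multiplier of $\mathbf{A}_t^{-1}$ replaced by that of $\mathbf{A}_t^{-2}$. Since $U$ is an isometry of $L^2(\Omega)$ onto $H$ and $\mathbf{A}_t^{-2}\phi=U\!\left(a(\xi)^{-2}\chi_t^cU^{-1}\phi\right)$, the left-hand side equals the $L^2(\Omega)$-norm of the spectral multiplier $\left(t^{-\alpha-1}\chi_t^c-s^{-\alpha-1}\chi_s^c\right)a(\xi)^{-2}$ acting on $U^{-1}\phi$. It therefore suffices to prove the uniform-in-$\xi$ pointwise bound
\[ \left\lvert t^{-\alpha-1}\chi_t^c-s^{-\alpha-1}\chi_s^c\right\rvert\lvert a(\xi)\rvert^{-2}\lesssim\left\lvert t^{\alpha-1}-s^{\alpha-1}\right\rvert+\left\lvert t^\alpha-s^\alpha\right\rvert, \]
after which $(\ref{8.16lemma linear new approach bilinear3 equation1})$ follows from $\left\lVert U^{-1}\phi\right\rVert_{L^2(\Omega)}=\left\lVert\phi\right\rVert_H$.

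First I would write the difference as an integral of its derivative,
\[ t^{-\alpha-1}\chi_t^c-s^{-\alpha-1}\chi_s^c=\int_s^t\left[-(\alpha+1)\tau^{-\alpha-2}\chi_\tau^c+\tau^{-\alpha-1}\frac{d}{d\tau}\chi_\tau^c\right]d\tau, \]
and then estimate the two integrand contributions after multiplying by $\lvert a(\xi)\rvert^{-2}$. For the first, the support property $\tau^\alpha\lvert a(\xi)\rvert\gtrsim1$ on $\operatorname{supp}\chi_\tau^c$ gives $\chi_\tau^c\lvert a(\xi)\rvert^{-2}\lesssim\tau^{2\alpha}$ and hence $\tau^{-\alpha-2}\chi_\tau^c\lvert a(\xi)\rvert^{-2}\lesssim\tau^{\alpha-2}$. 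For the second, the cutoff–derivative estimate $\left\lvert\tau^{-\alpha}\frac{d}{d\tau}\chi_\tau^c\right\rvert\lesssim\tau^\alpha\lvert a(\xi)\rvert^2$ already recorded in the proof of Lemma~\ref{lemma linear new approach bilinear1} gives $\tau^{-\alpha-1}\left\lvert\frac{d}{d\tau}\chi_\tau^c\right\rvert\lvert a(\xi)\rvert^{-2}=\tau^{-1}\left\lvert\tau^{-\alpha}\frac{d}{d\tau}\chi_\tau^c\right\rvert\lvert a(\xi)\rvert^{-2}\lesssim\tau^{\alpha-1}$.

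Integrating these two pointwise bounds over the interval from $s\wedge t$ to $s\vee t$ produces $\left\lvert t^{\alpha-1}-s^{\alpha-1}\right\rvert$ (the primitive of $\tau^{\alpha-2}$) and $\left\lvert t^\alpha-s^\alpha\right\rvert$ (the primitive of $\tau^{\alpha-1}$), which is exactly the right-hand side claimed; feeding this into the $L^2(\Omega)$-norm and using the isometry of $U$ completes the proof. There is no genuine analytic obstacle: the argument is a bookkeeping exercise parallel to Lemma~\ref{lemma linear new approach bilinear1}, and the only point that needs attention is matching the two powers obtained from differentiating $\tau^{-\alpha-1}\chi_\tau^c$ to the two summands on the right. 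The cleanest route to the power $\tau^{\alpha-1}$ is to use the cutoff estimate with $\tau^\alpha\lvert a(\xi)\rvert^2$ rather than the competing one with $\lvert a(\xi)\rvert$, since the former absorbs the factor $\lvert a(\xi)\rvert^{-2}$ directly, with no further appeal to the support of $\chi_\tau^c$.
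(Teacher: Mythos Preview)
Your proposal is correct and follows essentially the same route as the paper's own proof: write $t^{-\alpha-1}\chi_t^c-s^{-\alpha-1}\chi_s^c$ as an integral of its $\tau$-derivative, bound the two resulting terms pointwise by $\tau^{\alpha-2}\lvert a(\xi)\rvert^2$ and $\tau^{\alpha-1}\lvert a(\xi)\rvert^2$ (equivalently, after multiplying by $\lvert a(\xi)\rvert^{-2}$, by $\tau^{\alpha-2}$ and $\tau^{\alpha-1}$), integrate, and pass through the isometry $U$. The only cosmetic difference is that you absorb the factor $\lvert a(\xi)\rvert^{-2}$ into each term immediately rather than carrying $\lvert a(\xi)\rvert^2$ through and cancelling at the end.
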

\begin{proof}[Proof]
Note that $\left\lvert t^{-\alpha-2}\chi_t^c\right\rvert\lesssim t^{\alpha-2}\left\lvert a(\xi)\right\rvert^2$ and $\left\lvert t^{-\alpha-1}\frac{d}{dt}\chi_t^c\right\rvert\lesssim t^{\alpha-1}\left\lvert a(\xi)\right\rvert^2$. It follows that
\begin{align*}
\left\lvert t^{-\alpha-1}\chi_t^c-s^{-\alpha-1}\chi_s^c\right\rvert&=\left\lvert\int_s^t\frac{d}{d\tau}\left(\tau^{-\alpha-1}\chi_\tau^c\right)d\tau\right\rvert\\
&=\left\lvert\int_s^t(-\alpha-1)\tau^{-\alpha-2}\chi_\tau^c+\tau^{-\alpha-1}\frac{d}{d\tau}\chi_\tau^cd\tau\right\rvert\\
&\lesssim\left\lvert\int_s^t\tau^{\alpha-2}\left\lvert a(\xi)\right\rvert^2+\tau^{\alpha-1}\left\lvert a(\xi)\right\rvert^2d\tau\right\rvert\\
&\lesssim\left(\left\lvert t^{\alpha-1}-s^{\alpha-1}\right\rvert+\left\lvert t^\alpha-s^\alpha\right\rvert\right)\left\lvert a(\xi)\right\rvert^2.
\end{align*}
Hence we can obtain
\begin{align*}
\left\lVert t^{-\alpha-1}\mathbf{A}_t^{-2}\phi-s^{-\alpha-1}\mathbf{A}_s^{-2}\phi\right\rVert_H&=\left\lVert t^{-\alpha-1}a(\xi)^{-2}\chi_t^cU^{-1}\phi-s^{-\alpha-1}a(\xi)^{-2}\chi_s^cU^{-1}\phi\right\rVert_{L^2(\Omega)}\\
&\lesssim\left(\left\lvert t^{\alpha-1}-s^{\alpha-1}\right\rvert+\left\lvert t^\alpha-s^\alpha\right\rvert\right)\left\lVert\phi\right\rVert_H.
\end{align*}
\end{proof}
\begin{lemma}
Let $\phi\in H$. For any $t,s>0$, we have
\begin{equation}
\left\lVert R_t^S\phi-R_s^S\phi\right\rVert_H\lesssim\left(1+\left(t\wedge s\right)^{-1}\right)\lvert t-s\rvert\left\lVert\phi\right\rVert_H,
\label{lemma linear new approach bilinear2 equation2}
\end{equation}
and
\begin{equation}
\left\lVert AR_t^S\phi-AR_s^S\phi\right\rVert_H\lesssim\left(\left\lvert t^{-\alpha}-s^{-\alpha}\right\rvert+\left\lvert t^{1-\alpha}-s^{1-\alpha}\right\rvert\right)\left\lVert\phi\right\rVert_H.
\label{lemma linear new approach bilinear2 equation3}
\end{equation}
If moreover $\phi\in D(A)$, we have
\begin{equation}
\left\lVert R_t^S\phi-R_s^S\phi\right\rVert_H\lesssim\left(\left\lvert t^\alpha-s^\alpha\right\rvert+\left\lvert t^{\alpha+1}-s^{\alpha+1}\right\rvert\right)\left\lVert\phi\right\rVert_{D(A)}.
\label{lemma linear new approach bilinear2 equation4}
\end{equation}
\label{lemma linear new approach bilinear2}
\end{lemma}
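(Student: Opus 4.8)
The plan is to follow verbatim the scheme used for Lemma~\ref{lemma linear new approach bilinear1}: reduce every estimate, through the isometry of $U$ furnished by the spectral theorem, to a pointwise bound on the difference of the symbols, and then write that difference as a time integral of a derivative. Writing $r_S(t,\xi):=a(t,\xi)-\frac{i}{\Gamma(1-\alpha)}a(\xi)^{-1}t^{-\alpha}$, we have $R_t^S\phi=U\!\left(r_S(t,\xi)\chi_t^cU^{-1}\phi\right)$, and Theorem~\ref{Mittag-Leffler function asymptotic expansion} gives $r_S(t,\xi)\chi_t^c=O\!\left(\lvert a(\xi)\rvert^{-2}t^{-2\alpha}\right)\chi_t^c$. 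By the spectral theorem each of the three claims becomes an $L^2(\Omega)$ estimate of $\left\lVert\bigl(r_S(t,\xi)\chi_t^c-r_S(s,\xi)\chi_s^c\bigr)U^{-1}\phi\right\rVert_{L^2(\Omega)}$, possibly with an extra factor $a(\xi)$ coming from $A=Ua(\xi)U^{-1}$ in $(\ref{lemma linear new approach bilinear2 equation3})$.

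The decisive step, and the one I expect to be the main obstacle, is to control $\frac{d}{d\tau}r_S(\tau,\xi)$: the naive size of $r_S$ is only $O(\lvert a\rvert^{-2}\tau^{-2\alpha})$, and differentiating an $O$-term is not automatic. The resolution is to differentiate exactly rather than term by term. By Theorem~\ref{Derivative Mittag Leffler}, $\frac{d}{d\tau}a(\tau,\xi)=ia(\xi)b(\tau,\xi)$, so
\[ \frac{d}{d\tau}r_S(\tau,\xi)=ia(\xi)b(\tau,\xi)+\frac{i\alpha}{\Gamma(1-\alpha)}a(\xi)^{-1}\tau^{-\alpha-1}. \]
Inserting the asymptotics $b(\tau,\xi)\chi_\tau^c=\frac{1}{\Gamma(-\alpha)}a(\xi)^{-2}\tau^{-\alpha-1}\chi_\tau^c+O\!\left(\lvert a\rvert^{-3}\tau^{-2\alpha-1}\right)\chi_\tau^c$ and using the identity $\Gamma(1-\alpha)=-\alpha\,\Gamma(-\alpha)$, the two leading terms cancel exactly and leave
\[ \frac{d}{d\tau}r_S(\tau,\xi)\,\chi_\tau^c=O\!\left(\lvert a(\xi)\rvert^{-2}\tau^{-2\alpha-1}\right)\chi_\tau^c, \]
which is one order better in both $\tau$ and $\lvert a\rvert$. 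For the cut-off I will use $\bigl\lvert\frac{d}{d\tau}\chi_\tau^c\bigr\rvert\lesssim\tau^{\alpha-1}\lvert a(\xi)\rvert$, which on the support of $\frac{d}{d\tau}\chi_\tau^c$ (where $\tau^\alpha\lvert a(\xi)\rvert\sim1$) also reads $\bigl\lvert\frac{d}{d\tau}\chi_\tau^c\bigr\rvert\lesssim\tau^{-1}$.

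With these two ingredients the product rule gives, on the support of $\chi_\tau^c$,
\[ \Bigl\lvert\tfrac{d}{d\tau}\bigl(r_S(\tau,\xi)\chi_\tau^c\bigr)\Bigr\rvert\lesssim\lvert a(\xi)\rvert^{-2}\tau^{-2\alpha-1}+\lvert a(\xi)\rvert^{-1}\tau^{-\alpha-1}, \]
and I will convert this into the three stated bounds by exploiting the support relation $\tau^{-\alpha}\lesssim\lvert a(\xi)\rvert$ in three different ways, exactly as in the corresponding estimates of Lemma~\ref{lemma linear new approach bilinear1}. For $(\ref{lemma linear new approach bilinear2 equation2})$ I spend all available powers of $\lvert a\rvert$ to reach integrand $\lesssim\tau^{-1}$ and integrate using $\int_s^t\tau^{-1}\,d\tau\le(t\wedge s)^{-1}\lvert t-s\rvert$; for $(\ref{lemma linear new approach bilinear2 equation3})$ I first multiply the symbol by $a(\xi)$ and bound the integrand by $\tau^{-\alpha-1}+\tau^{-\alpha}$, whose integral is $\lesssim\lvert t^{-\alpha}-s^{-\alpha}\rvert+\lvert t^{1-\alpha}-s^{1-\alpha}\rvert$; and for $(\ref{lemma linear new approach bilinear2 equation4})$ I retain one factor $\lvert a(\xi)\rvert$, bounding the integrand by $(\tau^{\alpha-1}+\tau^\alpha)\lvert a(\xi)\rvert$ so that its integral is $\lesssim\bigl(\lvert t^\alpha-s^\alpha\rvert+\lvert t^{\alpha+1}-s^{\alpha+1}\rvert\bigr)\lvert a(\xi)\rvert$, the surviving $\lvert a(\xi)\rvert$ being absorbed into $\lVert\phi\rVert_{D(A)}$ since $\lVert\,\lvert a(\xi)\rvert U^{-1}\phi\rVert_{L^2(\Omega)}=\lVert A\phi\rVert_H\le\lVert\phi\rVert_{D(A)}$. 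Applying the isometry of $U$ to pass from the symbol estimates back to the $H$-norm then closes all three inequalities.
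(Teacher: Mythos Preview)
Your proposal is correct and follows the same overall strategy as the paper: reduce via the spectral isometry to a pointwise symbol estimate, write the symbol difference as $\int_s^t\frac{d}{d\tau}(\cdot)\,d\tau$, and convert the resulting bound using the support relation $\tau^{-\alpha}\lesssim\lvert a(\xi)\rvert$ in the three different ways needed for $(\ref{lemma linear new approach bilinear2 equation2})$--$(\ref{lemma linear new approach bilinear2 equation4})$.

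The one genuine difference is in how the $\tau$-derivative of the remainder symbol is handled. The paper treats the remainder as $O\!\left(\lvert a(\xi)\rvert^{-2}\right)t^{-2\alpha}\chi_t^c$ with the $O$-coefficient regarded as time-independent, and then only estimates $\bigl\lvert t^{-2\alpha}\chi_t^c-s^{-2\alpha}\chi_s^c\bigr\rvert$; this is notationally loose, since the Mittag--Leffler remainder is a function of $ia(\xi)t^\alpha$ and so does depend on $t$. You instead differentiate the exact remainder $r_S(\tau,\xi)=a(\tau,\xi)-\frac{i}{\Gamma(1-\alpha)}a(\xi)^{-1}\tau^{-\alpha}$ using Theorem~\ref{Derivative Mittag Leffler} and the asymptotics of $b(\tau,\xi)$, and the identity $\Gamma(1-\alpha)=-\alpha\Gamma(-\alpha)$ produces the cancellation that yields $\frac{d}{d\tau}r_S(\tau,\xi)\chi_\tau^c=O\!\left(\lvert a\rvert^{-2}\tau^{-2\alpha-1}\right)\chi_\tau^c$. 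This makes rigorous exactly the step the paper leaves informal; apart from that, the two arguments coincide.
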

\begin{proof}[Proof]
Using the fact $\left\lvert t^{-2\alpha-1}\chi_t^c\right\rvert\lesssim t^{-1}\left\lvert a(\xi)\right\rvert^2$, $\left\lvert t^{-2\alpha-1}\chi_t^c\right\rvert\lesssim t^{\alpha-1}\left\lvert a(\xi)\right\rvert^3$, $\left\lvert t^{-2\alpha}\frac{d}{dt}\chi_t^c\right\rvert\lesssim\left\lvert a(\xi)\right\rvert^2$ and $\left\lvert t^{-2\alpha}\frac{d}{dt}\chi_t^c\right\rvert\lesssim t^\alpha\left\lvert a(\xi)\right\rvert^3$ we obtain
\begin{align*}
\left\lvert t^{-2\alpha}\chi_t^c-s^{-2\alpha}\chi_s^c\right\rvert&=\left\lvert\int_s^t\frac{d}{d\tau}\left(\tau^{-2\alpha}\chi_\tau^c\right)d\tau\right\rvert\\
&=\left\lvert\int_s^t-2\alpha\tau^{-2\alpha-1}\chi_\tau^c+\tau^{-2\alpha}\frac{d}{d\tau}\chi_\tau^cd\tau\right\rvert\\
&\lesssim\left\lvert\int_s^t\tau^{-1}\left\lvert a(\xi)\right\rvert^2+\left\lvert a(\xi)\right\rvert^2d\tau\right\rvert\\
&\leq\left(1+\left(t\wedge s\right)^{-1}\right)\left\lvert a(\xi)\right\rvert^2\lvert t-s\rvert,
\end{align*}
and
\begin{align*}
\left\lvert t^{-2\alpha}\chi_t^c-s^{-2\alpha}\chi_s^c\right\rvert&=\left\lvert\int_s^t-2\alpha\tau^{-2\alpha-1}\chi_\tau^c+\tau^{-2\alpha}\frac{d}{d\tau}\chi_\tau^cd\tau\right\rvert\\
&\lesssim\left\lvert\int_s^t\tau^{\alpha-1}\left\lvert a(\xi)\right\rvert^3+\tau^\alpha\left\lvert a(\xi)\right\rvert^3d\tau\right\rvert\\
&\lesssim\left(\left\lvert t^\alpha-s^\alpha\right\rvert+\left\lvert t^{\alpha+1}-s^{\alpha+1}\right\rvert\right)\left\lvert a(\xi)\right\rvert^3,
\end{align*}
which then implies that
\begin{align*}
\left\lVert R_t^S\phi-R_s^S\phi\right\rVert_H&=\left\lVert O\left(\left\lvert a(\xi)\right\rvert^{-2}\right)t^{-2\alpha}\chi_t^cU^{-1}\phi-O\left(\left\lvert a(\xi)\right\rvert^{-2}\right)s^{-2\alpha}\chi_s^cU^{-1}\phi\right\rVert_{L^2(\Omega)}\\
&\lesssim\left(1+\left(t\wedge s\right)^{-1}\right)\lvert t-s\rvert\left\lVert\phi\right\rVert_H
\end{align*}
and
\begin{align*}
\left\lVert R_t^S\phi-R_s^S\phi\right\rVert_H&=\left\lVert O\left(\left\lvert a(\xi)\right\rvert^{-2}\right)t^{-2\alpha}\chi_t^cU^{-1}\phi-O\left(\left\lvert a(\xi)\right\rvert^{-2}\right)s^{-2\alpha}\chi_s^cU^{-1}\phi\right\rVert_{L^2(\Omega)}\\
&\lesssim\left(\left\lvert t^\alpha-s^\alpha\right\rvert+\left\lvert t^{\alpha+1}-s^{\alpha+1}\right\rvert\right)\left\lVert\phi\right\rVert_{D(A)}.
\end{align*}
Note that $\left\lvert t^{-2\alpha-1}\chi_t^c\right\rvert\lesssim t^{-\alpha-1}\left\lvert a(\xi)\right\rvert$ and $\left\lvert t^{-2\alpha}\frac{d}{dt}\chi_t^c\right\rvert\lesssim t^{-\alpha}\left\lvert a(\xi)\right\rvert$. It follows that
\begin{align*}
\left\lvert t^{-2\alpha}\chi_t^c-s^{-2\alpha}\chi_s^c\right\rvert&=\left\lvert\int_s^t-2\alpha\tau^{-2\alpha-1}\chi_\tau^c+\tau^{-2\alpha}\frac{d}{d\tau}\chi_\tau^cd\tau\right\rvert\\
&\lesssim\left\lvert\int_s^t\tau^{-\alpha-1}\left\lvert a(\xi)\right\rvert+\tau^{-\alpha}\left\lvert a(\xi)\right\rvert d\tau\right\rvert\\
&\lesssim\left(\left\lvert t^{-\alpha}-s^{-\alpha}\right\rvert+\left\lvert t^{1-\alpha}-s^{1-\alpha}\right\rvert\right)\left\lvert a(\xi)\right\rvert.
\end{align*}
Then we have
\begin{align*}
\left\lVert AR_t^S\phi-AR_s^S\phi\right\rVert_H&=\left\lVert a(\xi)O\left(\left\lvert a(\xi)\right\rvert^{-2}\right)t^{-2\alpha}\chi_t^cU^{-1}\phi-a(\xi)O\left(\left\lvert a(\xi)\right\rvert^{-2}\right)s^{-2\alpha}\chi_s^cU^{-1}\phi\right\rVert_{L^2(\Omega)}\\
&\lesssim\left(\left\lvert t^{-\alpha}-s^{-\alpha}\right\rvert+\left\lvert t^{1-\alpha}-s^{1-\alpha}\right\rvert\right)\left\lVert\phi\right\rVert_H.
\end{align*}
\end{proof}
\begin{lemma}
Let $\phi\in H$. For any $t,s>0$, we have
\begin{equation}
\left\lVert R_t^P\phi-R_s^P\phi\right\rVert_H\lesssim\left(\left\lvert t^{\alpha-1}-s^{\alpha-1}\right\rvert+\left\lvert t^\alpha-s^\alpha\right\rvert\right)\left\lVert\phi\right\rVert_H.
\label{8.16lemma linear new approach bilinear4 equation1}
\end{equation}
\label{8.16lemma linear new approach bilinear4}
\end{lemma}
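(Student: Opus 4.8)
The plan is to mirror exactly the argument used for Lemma $\ref{8.16lemma linear new approach bilinear3}$, since $R_t^P$ has the same structural form as the leading term of $P_t^h$, differing only by one extra power of $\lvert a(\xi)\rvert^{-1}$ in its symbol and by a different power of $t$. First I would read off from the high-frequency expansion that $R_t^P\phi = U\left(O\left(\lvert a(\xi)\rvert^{-3}\right)t^{-2\alpha-1}\chi_t^cU^{-1}\phi\right)$, where, following the convention used for $R_t^S$ in Lemma $\ref{lemma linear new approach bilinear2}$, the big-$O$ factor is regarded as a $t$-independent multiplier bounded by $\lvert a(\xi)\rvert^{-3}$. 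By the spectral theorem, $\left\lVert R_t^P\phi-R_s^P\phi\right\rVert_H$ equals the $L^2(\Omega)$-norm of $O\left(\lvert a(\xi)\rvert^{-3}\right)\left(t^{-2\alpha-1}\chi_t^c-s^{-2\alpha-1}\chi_s^c\right)U^{-1}\phi$, so the entire estimate reduces to a pointwise bound on the scalar difference $\left\lvert t^{-2\alpha-1}\chi_t^c-s^{-2\alpha-1}\chi_s^c\right\rvert$ that carries an extra factor $\lvert a(\xi)\rvert^3$ to cancel the symbol.

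To produce that scalar bound I would write the difference as $\int_s^t\frac{d}{d\tau}\left(\tau^{-2\alpha-1}\chi_\tau^c\right)d\tau$ and expand the derivative as $(-2\alpha-1)\tau^{-2\alpha-2}\chi_\tau^c+\tau^{-2\alpha-1}\frac{d}{d\tau}\chi_\tau^c$. The two pieces should then be controlled by the elementary estimates
\[ \left\lvert t^{-2\alpha-2}\chi_t^c\right\rvert\lesssim t^{\alpha-2}\lvert a(\xi)\rvert^3,\qquad \left\lvert t^{-2\alpha-1}\frac{d}{dt}\chi_t^c\right\rvert\lesssim t^{\alpha-1}\lvert a(\xi)\rvert^3, \]
which are the direct analogues of the two facts opening the proof of Lemma $\ref{8.16lemma linear new approach bilinear3}$, now with one more power of $\lvert a(\xi)\rvert$. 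Integrating $\tau^{\alpha-2}$ and $\tau^{\alpha-1}$ over $[s,t]$ then yields precisely $\lvert t^{\alpha-1}-s^{\alpha-1}\rvert$ and $\lvert t^\alpha-s^\alpha\rvert$, so the scalar difference is $\lesssim\left(\lvert t^{\alpha-1}-s^{\alpha-1}\rvert+\lvert t^\alpha-s^\alpha\rvert\right)\lvert a(\xi)\rvert^3$. Multiplying by the $\lvert a(\xi)\rvert^{-3}$ coming from the big-$O$ symbol leaves a $\xi$-uniformly bounded multiplier, and taking the $L^2(\Omega)$-norm then delivers the claimed inequality with $\left\lVert\phi\right\rVert_H$ on the right.

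The only genuinely delicate step, as in the preceding lemmas, is verifying the two pointwise multiplier facts, and in particular the one involving $\frac{d}{d\tau}\chi_\tau^c$. Here I would use that $\chi_\tau^c$ is a smooth function of $\tau^\alpha\lvert a(\xi)\rvert$: on the full support of $\chi_\tau^c$ one has the lower bound $\tau^{-\alpha}\lesssim\lvert a(\xi)\rvert$, which immediately gives the first estimate, while $\frac{d}{d\tau}\chi_\tau^c$ is supported only on the transition annulus $M<\tau^\alpha\lvert a(\xi)\rvert\leq 2M$, where $\tau^\alpha\lvert a(\xi)\rvert\sim 1$ lets one trade the surplus $\tau$- and $a(\xi)$-weights for the stated form. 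Once these two bounds are in hand, the rest is the same routine fundamental-theorem-of-calculus plus spectral-theorem computation already carried out for $\mathbf{A}_t^{-2}$, and no new obstacle arises.
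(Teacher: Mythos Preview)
Your proposal is correct and follows essentially the same route as the paper's own proof: write the difference via the fundamental theorem of calculus, use the two pointwise bounds $\lvert t^{-2\alpha-2}\chi_t^c\rvert\lesssim t^{\alpha-2}\lvert a(\xi)\rvert^3$ and $\lvert t^{-2\alpha-1}\frac{d}{dt}\chi_t^c\rvert\lesssim t^{\alpha-1}\lvert a(\xi)\rvert^3$, integrate, and then cancel the $\lvert a(\xi)\rvert^3$ against the $O(\lvert a(\xi)\rvert^{-3})$ symbol before passing to the $L^2(\Omega)$-norm. Your additional remarks on the support of $\frac{d}{d\tau}\chi_\tau^c$ simply make explicit what the paper takes for granted.
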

\begin{proof}[Proof]
Using the fact $\left\lvert t^{-2\alpha-2}\chi_t^c\right\rvert\lesssim t^{\alpha-2}\left\lvert a(\xi)\right\rvert^3$ and $\left\lvert t^{-2\alpha-1}\frac{d}{dt}\chi_t^c\right\rvert\lesssim t^{\alpha-1}\left\lvert a(\xi)\right\rvert^3$ we obtain
\begin{align*}
\left\lvert t^{-2\alpha-1}\chi_t^c-s^{-2\alpha-1}\chi_s^c\right\rvert&=\left\lvert\int_s^t\frac{d}{d\tau}\left(\tau^{-2\alpha-1}\chi_\tau^c\right)d\tau\right\rvert\\
&=\left\lvert\int_s^t(-2\alpha-1)\tau^{-2\alpha-2}\chi_\tau^c+\tau^{-2\alpha-1}\frac{d}{d\tau}\chi_\tau^cd\tau\right\rvert\\
&\lesssim\left\lvert\int_s^t\left(\tau^{\alpha-2}+\tau^{\alpha-1}\right)\left\lvert a(\xi)\right\rvert^3d\tau\right\rvert\\
&\lesssim\left(\left\lvert t^{\alpha-1}-s^{\alpha-1}\right\rvert+\left\lvert t^\alpha-s^\alpha\right\rvert\right)\left\lvert a(\xi)\right\rvert^3.
\end{align*}
Hence there holds
\begin{align*}
\left\lVert R_t^P\phi-R_s^P\phi\right\rVert_H&=\left\lVert O\left(\left\lvert a(\xi)\right\rvert^{-3}\right)t^{-2\alpha-1}\chi_t^cU^{-1}\phi-O\left(\left\lvert a(\xi)\right\rvert^{-3}\right)s^{-2\alpha-1}\chi_s^cU^{-1}\phi\right\rVert_{L^2(\Omega)}\\
&\lesssim\left(\left\lvert t^{\alpha-1}-s^{\alpha-1}\right\rvert+\left\lvert t^\alpha-s^\alpha\right\rvert\right)\left\lVert\phi\right\rVert_H.
\end{align*}
\end{proof}
\begin{proposition}
For $T>0$, $S_t$ maps $H$ into $C\left((0,T];D(A)\right)$ with the estimate
\begin{equation}
\left\lVert S_t\phi\right\rVert_{D(A)}\lesssim\left(1+t^{-\alpha}\right)\left\lVert\phi\right\rVert_H,\quad t>0,
\label{lemmalemma linearlinear new2 equation1}
\end{equation}
and into $C\left([0,T];H\right)$ with the estimate
\begin{equation}
\left\lVert S_t\phi\right\rVert_H\lesssim\left\lVert\phi\right\rVert_H,\quad t\geq0.
\label{lemmalemma linearlinear new2 equation2}
\end{equation}
\label{lemma linear new2}
\end{proposition}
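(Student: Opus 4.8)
The plan is to pass to the spectral representation, in which $U\colon L^2(\Omega)\to H$ is unitary, $A$ acts as multiplication by the real symbol $a(\xi)$, and $S_t\phi=U(a(t,\xi)U^{-1}\phi)$ with $a(t,\xi)=E_{\alpha,1}(ia(\xi)t^\alpha)$. Since $U$ is an isometry, every estimate reduces to a pointwise bound on a multiplier in $L^2(\Omega)$, and $\|S_t\phi\|_{D(A)}\sim\|a(t,\xi)U^{-1}\phi\|_{L^2(\Omega)}+\|a(\xi)a(t,\xi)U^{-1}\phi\|_{L^2(\Omega)}$. Throughout I use the splitting $S_t=S_t^l+S_t^h$ together with the decomposition $S_t^h\phi=\frac{i}{\Gamma(1-\alpha)}t^{-\alpha}\mathbf{A}_t^{-1}\phi+R_t^S\phi$ recorded before the lemmas.

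I would first prove the two boundedness estimates. For $(\ref{lemmalemma linearlinear new2 equation2})$: on the support of $\chi_t$ the argument $ia(\xi)t^\alpha$ lies in a fixed compact set, so the continuity of $E_{\alpha,1}$ gives $|\chi_ta(t,\xi)|\lesssim1$ and hence $\|S_t^l\phi\|_H\lesssim\|\phi\|_H$, while the high-frequency part follows from $(\ref{lemma linear new approach1 equation1})$ (which yields $\|t^{-\alpha}\mathbf{A}_t^{-1}\phi\|_H\lesssim\|\phi\|_H$) and $(\ref{lemma linear new approach2 equation1})$. For $(\ref{lemmalemma linearlinear new2 equation1})$ I multiply by $a(\xi)$: on the support of $\chi_t$ one has $|a(\xi)|\le2Mt^{-\alpha}$, producing the factor $t^{-\alpha}$ for the low-frequency piece, and for the high-frequency piece $(\ref{lemma linear new approach1 equation2})$ and $(\ref{lemma linear new approach2 equation2})$ give $\|t^{-\alpha}A\mathbf{A}_t^{-1}\phi\|_H\le t^{-\alpha}\|\phi\|_H$ and $\|AR_t^S\phi\|_H\lesssim t^{-\alpha}\|\phi\|_H$. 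Adding the two contributions to $\|S_t\phi\|_{D(A)}$ gives $(1+t^{-\alpha})\|\phi\|_H$.

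Next I would establish continuity. Continuity into $C([0,T];H)$ follows by dominated convergence: the same low/high analysis shows $|a(t,\xi)|\lesssim1$ uniformly, and $a(t,\xi)\to a(s,\xi)$ pointwise as $t\to s$ (including $s=0$, where $a(0,\xi)=1$), so $|a(t,\xi)U^{-1}\phi|^2$ is dominated by an $L^1(\Omega)$ function and $\|S_t\phi-S_s\phi\|_H\to0$. For continuity into $C((0,T];D(A))$ I treat the two frequency ranges separately. The high-frequency part is handled directly by the difference estimates: $(\ref{lemma linear new approach bilinear1 equation1})$ and $(\ref{lemma linear new approach bilinear1 equation2})$ control the increment of $t^{-\alpha}\mathbf{A}_t^{-1}\phi$ and of its $A$-image, while $(\ref{lemma linear new approach bilinear2 equation2})$ and $(\ref{lemma linear new approach bilinear2 equation3})$ do the same for $R_t^S$, and each right-hand side tends to $0$ as $t\to s$ for fixed $s>0$. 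For the low-frequency part I use dominated convergence again, but to dominate $AS_t^l\phi$ I restrict $t$ to a compact subinterval $[s/2,2s]\subset(0,T]$, which confines the support of $\chi_t$ to $\{|a(\xi)|\lesssim s^{-\alpha}\}$; then $|a(\xi)\chi_ta(t,\xi)U^{-1}\phi|\lesssim s^{-\alpha}|U^{-1}\phi|\in L^2(\Omega)$ uniformly in $t$, and pointwise convergence gives $\|AS_t^l\phi-AS_s^l\phi\|_H\to0$.

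The main obstacle is exactly this last step: the low-frequency cutoff $\chi_t$ moves with $t$, so no dominating function is uniform over all of $(0,T]$, which is consistent with the blow-up of $(\ref{lemmalemma linearlinear new2 equation1})$ as $t\downarrow0$. Confining $t$ to a compact subinterval of $(0,T]$ is what simultaneously makes the dominating function available and the difference estimates effective, and this is precisely why the stated regularity is $C((0,T];D(A))$ and not $C([0,T];D(A))$.
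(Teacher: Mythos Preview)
Your argument is correct and follows the paper's approach almost verbatim: the same low/high splitting, the same use of Lemmas~\ref{lemma linear new approach1} and~\ref{lemma linear new approach2} for $(\ref{lemmalemma linearlinear new2 equation1})$ and $(\ref{lemmalemma linearlinear new2 equation2})$, and dominated convergence for continuity at $t=0$. The only difference is cosmetic: for continuity on $(0,T]$ the paper simply invokes the quantitative difference estimates of Proposition~\ref{proposition linear new approach bilinear1} (which handle the low-frequency piece by an explicit bound on $|\chi_t a(t,\xi)-\chi_s a(s,\xi)|$), whereas you treat that piece by a separate dominated-convergence argument on compact subintervals; both routes are valid and yield the same conclusion.
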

\begin{proof}[Proof]
The proof of $S_t$ maps $H$ into $C\left((0,T];H\right)$ and $C\left((0,T];D(A)\right)$ is left to Proposition $\ref{proposition linear new approach bilinear1}$ and the claim that $S_t$ is continuous at $t=0$ in the norm of $H$ can be proved by Lebesgue's dominated theorem. It suffices to prove $(\ref{lemmalemma linearlinear new2 equation1})$ and $(\ref{lemmalemma linearlinear new2 equation2})$. On one hand,
\begin{align*}
\left\lVert S_t^l\phi\right\rVert_{D(A)}&=\left\lVert S_t^l\phi\right\rVert_H+\left\lVert AS_t^l\phi\right\rVert_H\\
&=\left\lVert\chi_ta(t,\xi)U^{-1}\phi\right\rVert_{L^2(\Omega)}+\left\lVert a(\xi)\chi_ta(t,\xi)U^{-1}\phi\right\rVert_{L^2(\Omega)}\\
&\lesssim\left\lVert U^{-1}\phi\right\rVert_{L^2(\Omega)}+\left\lVert a(\xi)\chi_tU^{-1}\phi\right\rVert_{L^2(\Omega)}\\
&\lesssim\left\lVert U^{-1}\phi\right\rVert_{L^2(\Omega)}+t^{-\alpha}\left\lVert U^{-1}\phi\right\rVert_{L^2(\Omega)}\\
&=\left(1+t^{-\alpha}\right)\left\lVert\phi\right\rVert_H.
\end{align*}
On the other hand, it follows from Lemma $\ref{lemma linear new approach1}$ that
\[ \left\lVert S_t^h\phi\right\rVert_H\lesssim t^{-\alpha}\left\lVert\mathbf{A}_t^{-1}\phi\right\rVert_H+\left\lVert R_t^S\phi\right\rVert_H\lesssim\left\lVert\phi\right\rVert_H, \]
and
\[ \left\lVert AS_t^h\phi\right\rVert_H\lesssim t^{-\alpha}\left\lVert A\mathbf{A}_t^{-1}\phi\right\rVert_H+\left\lVert AR_t^S\phi\right\rVert_H\lesssim t^{-\alpha}\left\lVert\phi\right\rVert_H, \]
which implies that
\[ \left\lVert S_t^h\phi\right\rVert_{D(A)}\lesssim\left(1+t^{-\alpha}\right)\left\lVert\phi\right\rVert_H. \]
Combining above we can prove $(\ref{lemmalemma linearlinear new2 equation1})$ and $(\ref{lemmalemma linearlinear new2 equation2})$.
\end{proof}
\begin{proposition}
Let $\phi\in H$. For any $t,s>0$, we have
\begin{equation}
\left\lVert S_t\phi-S_s\phi\right\rVert_H\lesssim\left(1+\left(t\wedge s\right)^{-1}\right)\lvert t-s\rvert\left\lVert\phi\right\rVert_H
\label{proposition linear new approach bilinear1 equation1}
\end{equation}
and
\begin{equation}
\left\lVert AS_t\phi-AS_s\phi\right\rVert_H\lesssim\left(\left\lvert t^{-\alpha}-s^{-\alpha}\right\rvert+\left\lvert t^{1-\alpha}-s^{1-\alpha}\right\rvert\right)\left\lVert\phi\right\rVert_H.
\label{proposition linear new approach bilinear1 equation2}
\end{equation}
If moreover $\phi\in D(A)$, we have
\begin{equation}
\left\lVert S_t\phi-S_s\phi\right\rVert_H\lesssim\left(\left\lvert t^\alpha-s^\alpha\right\rvert+\left\lvert t^{\alpha+1}-s^{\alpha+1}\right\rvert\right)\left\lVert\phi\right\rVert_{D(A)}.
\label{proposition linear new approach bilinear1 equation3}
\end{equation}
\label{proposition linear new approach bilinear1}
\end{proposition}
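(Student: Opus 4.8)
The plan is to exploit the splitting $S_t=S_t^l+S_t^h$ together with the high/low-frequency decomposition already in place, so that the high-frequency part reduces to the two bilinear lemmas just proved and only the low-frequency part needs direct work. Writing $S_t\phi-S_s\phi=(S_t^l\phi-S_s^l\phi)+(S_t^h\phi-S_s^h\phi)$, I would bound the two pieces separately and add them; since each target right-hand side is a sum of two nonnegative quantities, it is enough for each piece to be dominated by that sum (and for the low-frequency part a single one of the two terms will already do).

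For the high-frequency part I would use $S_t^h\phi=\frac{i}{\Gamma(1-\alpha)}t^{-\alpha}\mathbf{A}_t^{-1}\phi+R_t^S\phi$, and hence $AS_t^h\phi=\frac{i}{\Gamma(1-\alpha)}t^{-\alpha}A\mathbf{A}_t^{-1}\phi+AR_t^S\phi$. Each of the three estimates then follows by adding the matching lines of Lemma $\ref{lemma linear new approach bilinear1}$ and Lemma $\ref{lemma linear new approach bilinear2}$: $(\ref{proposition linear new approach bilinear1 equation1})$ from $(\ref{lemma linear new approach bilinear1 equation1})$ and $(\ref{lemma linear new approach bilinear2 equation2})$, $(\ref{proposition linear new approach bilinear1 equation2})$ from $(\ref{lemma linear new approach bilinear1 equation2})$ and $(\ref{lemma linear new approach bilinear2 equation3})$, and $(\ref{proposition linear new approach bilinear1 equation3})$ from $(\ref{lemma linear new approach bilinear1 equation3})$ and $(\ref{lemma linear new approach bilinear2 equation4})$. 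No new estimate is needed here: the bounds already match the right-hand sides term by term.

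For the low-frequency part I would estimate the scalar multiplier directly via the fundamental theorem of calculus in $\tau$. The product rule and $\frac{d}{d\tau}a(\tau,\xi)=ia(\xi)b(\tau,\xi)$ give
\[ \chi_ta(t,\xi)-\chi_sa(s,\xi)=\int_s^t\left(\left(\tfrac{d}{d\tau}\chi_\tau\right)a(\tau,\xi)+\chi_\tau ia(\xi)b(\tau,\xi)\right)d\tau. \]
On the support of $\chi_\tau$ one has $\tau^\alpha\lvert a(\xi)\rvert\lesssim1$, so the entire Mittag-Leffler factors $E_{\alpha,1}(ia(\xi)\tau^\alpha)$ and $E_{\alpha,\alpha}(ia(\xi)\tau^\alpha)$ stay in a fixed compact set and are bounded, giving $\lvert a(\tau,\xi)\rvert\lesssim1$ and $\lvert b(\tau,\xi)\rvert\lesssim\tau^{\alpha-1}$; moreover $\lvert\frac{d}{d\tau}\chi_\tau\rvert\lesssim\tau^{-1}$ with $\lvert a(\xi)\rvert\sim\tau^{-\alpha}$ on the transition region. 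These bounds make the integrand $\lesssim\tau^{-1}$, and integration yields $\lvert\chi_ta(t,\xi)-\chi_sa(s,\xi)\rvert\lesssim(t\wedge s)^{-1}\lvert t-s\rvert$, which is $(\ref{proposition linear new approach bilinear1 equation1})$ after taking the $L^2(\Omega)$ norm against $U^{-1}\phi$. For $(\ref{proposition linear new approach bilinear1 equation2})$ I would multiply the integrand by $a(\xi)$; the two available factors $\lvert a(\xi)\rvert\lesssim\tau^{-\alpha}$ sharpen the integrand to $\tau^{-\alpha-1}$, whose integral is $\sim\lvert t^{-\alpha}-s^{-\alpha}\rvert$. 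For $(\ref{proposition linear new approach bilinear1 equation3})$, with $\phi\in D(A)$, I would instead extract a single factor $a(\xi)$ (the second term already carries one, and on the transition region $\tau^{-1}\sim\tau^{\alpha-1}\lvert a(\xi)\rvert$), obtaining $\lvert\chi_ta(t,\xi)-\chi_sa(s,\xi)\rvert\lesssim\lvert a(\xi)\rvert\lvert t^\alpha-s^\alpha\rvert$; pairing the spare $a(\xi)$ with $U^{-1}\phi$ and using $\lVert a(\xi)U^{-1}\phi\rVert_{L^2(\Omega)}=\lVert A\phi\rVert_H\leq\lVert\phi\rVert_{D(A)}$ closes the estimate.

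The main obstacle is the low-frequency bookkeeping: one must convert the support constraint $\tau^\alpha\lvert a(\xi)\rvert\sim M$ into exactly the right power of $\tau$ in each integrand so that integration returns precisely $(t\wedge s)^{-1}\lvert t-s\rvert$, $\lvert t^{-\alpha}-s^{-\alpha}\rvert$, or $\lvert a(\xi)\rvert\lvert t^\alpha-s^\alpha\rvert$, and in the $D(A)$ case to arrange the factorization so that exactly one power of $a(\xi)$ is left free to be absorbed into $\lVert A\phi\rVert_H$. Once the integrand powers are pinned down, integration followed by the identity $\lVert U(\cdot)\rVert_H=\lVert\cdot\rVert_{L^2(\Omega)}$ finishes each low-frequency bound, and adding the high-frequency bounds from the second step yields the three stated inequalities.
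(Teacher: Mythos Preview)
Your proposal is correct and follows essentially the same route as the paper: split $S_t=S_t^l+S_t^h$, dispatch the high-frequency piece by combining Lemma~\ref{lemma linear new approach bilinear1} with Lemma~\ref{lemma linear new approach bilinear2}, and handle the low-frequency piece by writing $\chi_ta(t,\xi)-\chi_sa(s,\xi)=\int_s^t\bigl(ia(\xi)b(\tau,\xi)\chi_\tau+a(\tau,\xi)\tfrac{d}{d\tau}\chi_\tau\bigr)d\tau$ and converting the support constraint into powers of $\tau$. The only cosmetic difference is that the paper bounds the low-frequency integrand by two separate powers (e.g.\ $1+\tau^{-1}$, or $\tau^{\alpha-1}\lvert a(\xi)\rvert+\tau^\alpha\lvert a(\xi)\rvert$), producing both terms on each right-hand side, whereas you squeeze it to a single power (e.g.\ $\tau^{-1}$, or $\tau^{\alpha-1}\lvert a(\xi)\rvert$), which is slightly sharper but still dominated by the stated two-term bounds.
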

\begin{proof}[Proof]
According to Lemma $\ref{lemma linear new approach bilinear1}$ and Lemma $\ref{lemma linear new approach bilinear2}$, it's sufficient to prove
\begin{gather}
\left\lVert S_t^l\phi-S_s^l\phi\right\rVert_H\lesssim\left(1+\left(t\wedge s\right)^{-1}\right)\lvert t-s\rvert\left\lVert\phi\right\rVert_H,\label{proposition linear new approach bilinear1 proof equation1}\\
\left\lVert S_t^l\phi-S_s^l\phi\right\rVert_H\lesssim\left(\left\lvert t^\alpha-s^\alpha\right\rvert+\left\lvert t^{\alpha+1}-s^{\alpha+1}\right\rvert\right)\left\lVert\phi\right\rVert_{D(A)},\label{proposition linear new approach bilinear1 proof equation3}
\end{gather}
and
\begin{equation}
\left\lVert AS_t^l\phi-AS_s^l\phi\right\rVert_H\lesssim\left(\left\lvert t^{-\alpha}-s^{-\alpha}\right\rvert+\left\lvert t^{1-\alpha}-s^{1-\alpha}\right\rvert\right)\left\lVert\phi\right\rVert_H.
\label{proposition linear new approach bilinear1 proof equation2}
\end{equation}
Since $\left\lvert a(\xi)b(t,\xi)\chi_t\right\rvert\lesssim t^{-1}$, $\left\lvert a(\xi)b(t,\xi)\chi_t\right\rvert\lesssim t^{\alpha-1}\left\lvert a(\xi)\right\rvert$, $\left\lvert a(t,\xi)\frac{d}{dt}\chi_t\right\rvert\lesssim1$ and $\left\lvert a(t,\xi)\frac{d}{dt}\chi_t\right\rvert\lesssim t^\alpha\left\lvert a(\xi)\right\rvert$, we have
\begin{align*}
\left\lvert\chi_ta(t,\xi)-\chi_sa(s,\xi)\right\rvert&=\left\lvert\int_s^t\frac{d}{d\tau}\left(\chi_\tau a(\tau,\xi)\right)d\tau\right\rvert\\
&=\left\lvert\int_s^tia(\xi)b(\tau,\xi)\chi_\tau+a(\tau,\xi)\frac{d}{d\tau}\chi_\tau d\tau\right\rvert\\
&\lesssim\left\lvert\int_s^t1+\tau^{-1}d\tau\right\rvert\\
&\leq\left(1+\left(t\wedge s\right)^{-1}\right)\lvert t-s\rvert,
\end{align*}
and
\begin{align*}
\left\lvert\chi_ta(t,\xi)-\chi_sa(s,\xi)\right\rvert&=\left\lvert\int_s^tia(\xi)b(\tau,\xi)\chi_\tau+a(\tau,\xi)\frac{d}{d\tau}\chi_\tau d\tau\right\rvert\\
&\lesssim\left\lvert\int_s^t\tau^{\alpha-1}\left\lvert a(\xi)\right\rvert+\tau^\alpha\left\lvert a(\xi)\right\rvert d\tau\right\rvert\\
&\lesssim\left(\left\lvert t^\alpha-s^\alpha\right\rvert+\left\lvert t^{\alpha+1}-s^{\alpha+1}\right\rvert\right)\left\lvert a(\xi)\right\rvert
\end{align*}
Using the fact that $\left\lvert a(t,\xi)\chi_t\right\rvert\lesssim t^{-\alpha}\left\lvert a(\xi)\right\rvert^{-1}$ and $\left\lvert a(\xi)b(t,\xi)\chi_t\right\rvert\lesssim t^{-\alpha-1}\left\lvert a(\xi)\right\rvert^{-1}$ we can obtain
\begin{align*}
\left\lvert\chi_ta(t,\xi)-\chi_sa(s,\xi)\right\rvert&=\left\lvert\int_s^tia(\xi)b(\tau,\xi)\chi_\tau+a(\tau,\xi)\frac{d}{d\tau}\chi_\tau d\tau\right\rvert\\
&\lesssim\left\lvert\int_s^t\tau^{-\alpha-1}\left\lvert a(\xi)\right\rvert^{-1}+\tau^{-\alpha}\left\lvert a(\xi)\right\rvert^{-1}d\tau\right\rvert\\
&\lesssim\left(\left\lvert t^{-\alpha}-s^{-\alpha}\right\rvert+\left\lvert t^{1-\alpha}-s^{1-\alpha}\right\rvert\right)\left\lvert a(\xi)\right\rvert^{-1}.
\end{align*}
Then $(\ref{proposition linear new approach bilinear1 proof equation1})$ follows from
\begin{align*}
\left\lVert S_t^l\phi-S_s^l\phi\right\rVert_H&=\left\lVert\chi_ta(t,\xi)U^{-1}\phi-\chi_sa(s,\xi)U^{-1}\phi\right\rVert_{L^2(\Omega)}\\
&\lesssim\left(1+\left(t\wedge s\right)^{-1}\right)\lvert t-s\rvert\left\lVert\phi\right\rVert_H,
\end{align*}
$(\ref{proposition linear new approach bilinear1 equation3})$ follows from
\begin{align*}
\left\lVert S_t^l\phi-S_s^l\phi\right\rVert_H&=\left\lVert\chi_ta(t,\xi)U^{-1}\phi-\chi_sa(s,\xi)U^{-1}\phi\right\rVert_{L^2(\Omega)}\\
&\lesssim\left(\left\lvert t^\alpha-s^\alpha\right\rvert+\left\lvert t^{\alpha+1}-s^{\alpha+1}\right\rvert\right)\left\lVert\phi\right\rVert_{D(A)},
\end{align*}
and $(\ref{proposition linear new approach bilinear1 proof equation2})$ follows from
\begin{align*}
\left\lVert AS_t^l\phi-AS_s^l\phi\right\rVert_H&=\left\lVert a(\xi)\chi_ta(t,\xi)U^{-1}\phi-a(\xi)\chi_sa(s,\xi)U^{-1}\phi\right\rVert_{L^2(\Omega)}\\
&\lesssim\left(\left\lvert t^{-\alpha}-s^{-\alpha}\right\rvert+\left\lvert t^{1-\alpha}-s^{1-\alpha}\right\rvert\right)\left\lVert\phi\right\rVert_H.
\end{align*}
\end{proof}
\begin{proposition}
For every $t>0$, if $\phi\in H$, then $g_{1-\alpha}(t)*(S_t\phi-\phi)$ is differentiable and
\begin{equation}
\mathbf{D}_t^\alpha(S_t\phi-\phi)=iAS_t\phi.
\label{lemma linear new4 equation1}
\end{equation}
\label{lemma linear new4}
\end{proposition}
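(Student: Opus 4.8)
The plan is to transfer the fractional differential identity from the scalar Mittag--Leffler multiplier to the operator level through the unitary $U$ furnished by the spectral theorem (Appendix \ref{9.5appendix The spectral theorem of the selfadjoint operator}). Writing $v(t):=S_t\phi-\phi=U\big((a(t,\xi)-1)U^{-1}\phi\big)$, we have $v(0)=0$ because $a(0,\xi)=E_{\alpha,1}(0)=1$, so that $\mathbf{D}_t^\alpha v=\tfrac{d}{dt}\big(g_{1-\alpha}*v\big)$. Since $U$ is bounded it commutes with the Bochner integral defining the convolution, whence
\begin{equation*}
(g_{1-\alpha}*v)(t)=U\Big(G(t,\xi)\,U^{-1}\phi\Big),\qquad G(t,\xi):=\big(g_{1-\alpha}*(a(\cdot,\xi)-1)\big)(t).
\end{equation*}
As $U$ is bounded it also commutes with strong differentiation in $t$; thus it suffices to show that the $L^2(\Omega)$-valued map $t\mapsto G(t,\xi)U^{-1}\phi$ is strongly differentiable with derivative $ia(\xi)a(t,\xi)U^{-1}\phi$, for then applying $U$ and using $A=U\,a(\xi)\,U^{-1}$ together with $S_t\phi\in D(A)$ for $t>0$ (Proposition \ref{lemma linear new2}) yields $\tfrac{d}{dt}(g_{1-\alpha}*v)(t)=U\big(ia(\xi)a(t,\xi)U^{-1}\phi\big)=iAS_t\phi$.

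First I would record the scalar fact. For fixed $\xi$ the function $t\mapsto a(t,\xi)=E_{\alpha,1}(ia(\xi)t^\alpha)$ is the Mittag--Leffler solution of the scalar Caputo equation $\mathbf{D}_t^\alpha a(t,\xi)=ia(\xi)a(t,\xi)$ with $a(0,\xi)=1$ (see Appendix \ref{9.17On the Mittag-Leffler functions}); equivalently $\partial_tG(t,\xi)=ia(\xi)a(t,\xi)$. Since $a(\cdot,\xi)$ is continuous in $t$, this derivative is continuous, so $G(\cdot,\xi)\in C^1(0,\infty)$ and the fundamental theorem of calculus is available.

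Next I would upgrade this pointwise-in-$\xi$ statement to strong $L^2$-differentiability. Fix $t>0$ and take $|h|<t/2$, so that every $s$ between $t$ and $t+h$ lies in $[t/2,3t/2]$. By the fundamental theorem of calculus,
\begin{equation*}
\frac{G(t+h,\xi)-G(t,\xi)}{h}-ia(\xi)a(t,\xi)=\frac1h\int_t^{t+h}ia(\xi)\big(a(s,\xi)-a(t,\xi)\big)\,ds,
\end{equation*}
whose modulus is at most $\sup_{s}|a(\xi)|\,|a(s,\xi)-a(t,\xi)|$, which tends to $0$ as $h\to0$ for each $\xi$ by continuity. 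To dominate it I would establish the uniform bound $|a(\xi)a(s,\xi)|\lesssim s^{-\alpha}$: on the region $s^\alpha|a(\xi)|\le M$ one has $|a(\xi)|\lesssim s^{-\alpha}$ while $|a(s,\xi)|\lesssim1$ since $E_{\alpha,1}$ is bounded on $\{|z|\le M\}$, and on $s^\alpha|a(\xi)|>M$ the asymptotic expansion (Theorem \ref{Mittag-Leffler function asymptotic expansion}) gives $|a(s,\xi)|\lesssim|a(\xi)|^{-1}s^{-\alpha}$. Consequently the integrand above, multiplied by $U^{-1}\phi$, is dominated pointwise by $C\,t^{-\alpha}\,|U^{-1}\phi(\xi)|\in L^2(\Omega)$, and Lebesgue's dominated convergence theorem yields convergence to $0$ in $L^2(\Omega)$. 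This produces the required strong derivative and, after applying $U$, the identity $(\ref{lemma linear new4 equation1})$.

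The main obstacle is precisely this last step: the Mittag--Leffler identity holds only pointwise in $\xi$, and promoting it to a strong $L^2(\Omega)$-derivative forces one to dominate the difference quotients uniformly in $\xi$, including the high-frequency regime where $a(\xi)$ is unbounded. This is controlled only through the decay of the Mittag--Leffler function encoded in the asymptotic expansion, which trades the apparent growth $|a(\xi)|$ for the factor $t^{-\alpha}$; without this cancellation the interchange of $\tfrac{d}{dt}$ with $U$ could not be justified.
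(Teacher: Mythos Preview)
Your argument is correct and follows the same high-level strategy as the paper: transfer the problem through the unitary $U$ to a scalar statement in $L^2(\Omega)$, use that $S_t\phi\in D(A)$ for $t>0$ from Proposition~\ref{lemma linear new2}, and identify the result as $iAS_t\phi$. The difference lies in how the strong $L^2$-differentiation is justified. The paper expands $a(\tau,\xi)-1$ as the Mittag--Leffler power series, evaluates the convolution $g_{1-\alpha}*(a(\cdot,\xi)-1)$ term by term via the Beta integral to obtain $\sum_{k\ge1}\tfrac{i^ka(\xi)^kt^{\alpha(k-1)+1}}{\Gamma(\alpha(k-1)+2)}$, and then differentiates this series termwise. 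You instead invoke the scalar Caputo identity for $a(\cdot,\xi)$ directly and justify the passage to $L^2(\Omega)$ by dominated convergence, with dominating function coming from the uniform bound $|a(\xi)a(s,\xi)|\lesssim s^{-\alpha}$ obtained from the asymptotic expansion of $E_{\alpha,1}$ on the high-frequency set. Your route makes the interchange-of-limits step explicit (the paper leaves it implicit) and reuses the same decay mechanism that underlies the estimates of $S_t^h$ elsewhere in Section~\ref{9.17Linear estimate. The well-posedness of the linear equation}; the paper's route is algebraically more direct and avoids any appeal to the asymptotics of $E_{\alpha,1}$.
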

\begin{proof}[Proof]
Let $\psi(t)=g_{1-\alpha}(t)*(S_t\phi-\phi)$. Since
\begin{align*}
\psi(t)&=\frac{1}{\Gamma(1-\alpha)}\int_0^t\left(t-\tau\right)^{-\alpha}\left(S_\tau\phi-\phi\right)d\tau\\
&=U\left(\frac{1}{\Gamma(1-\alpha)}\int_0^t\left(t-\tau\right)^{-\alpha}\left(a(\tau,\xi)U^{-1}\phi-U^{-1}\phi\right)d\tau\right)\\
&=U\left(\sum\limits_{k=1}^\infty\frac{i^ka(\xi)^kt^{\alpha(k-1)+1}}{\Gamma(\alpha(k-1)+2)}U^{-1}\phi\right),
\end{align*}
then we have
\begin{align*}
\lim\limits_{h\to0}\frac{\psi(t+h)-\psi(t)}{h}&=\lim\limits_{h\to0}U\left(\sum\limits_{k=1}^\infty\frac{i^ka(\xi)^k}{\Gamma(\alpha(k-1)+2)}\frac{\left(t+h\right)^{\alpha(k-1)+1}-t^{\alpha(k-1)+1}}{h}U^{-1}\phi\right)\\
&=U\left(\sum\limits_{k=1}^\infty\frac{i^ka(\xi)^k}{\Gamma(\alpha(k-1)+2)}\lim\limits_{h\to0}\frac{\left(t+h\right)^{\alpha(k-1)+1}-t^{\alpha(k-1)+1}}{h}U^{-1}\phi\right)\\
&=iU\left(a(\xi)a(t,\xi)U^{-1}\phi\right).
\end{align*}
Proposition $\ref{lemma linear new2}$ shows that $S_t\phi\in D(A)$ for every $t>0$, which implies that $g_{1-\alpha}(t)*(S_t\phi-\phi)$ is differentiable and
\[ \frac{d}{dt}\left(g_{1-\alpha}(t)*(S_t\phi-\phi)\right)=\mathbf{D}_t^\alpha\left(S_t\phi-\phi\right)=iAS_t\varphi \]
\end{proof}
\begin{proposition}
For $T>0$, $G$ maps $L^\infty\left((0,T);H\right)$ into $C\left([0,T];H\right)$ with the estimate
\begin{equation}
\left\lVert Gv\right\rVert_{L_T^\infty H}\lesssim T^\alpha\left\lVert v\right\rVert_{L_T^\infty H}.
\label{lemma linear new3 equation1}
\end{equation}
\label{lemma linear new3}
\end{proposition}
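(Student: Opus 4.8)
The plan is to reduce both assertions to a single pointwise bound on the Fourier multiplier $b(t,\xi)$ that defines $P_t$, namely $\left\lVert P_t\phi\right\rVert_H\lesssim t^{\alpha-1}\left\lVert\phi\right\rVert_H$ for every $t>0$. To obtain this I would split $P_t=P_t^l+P_t^h$. On the low part, $\left\lvert\chi_tb(t,\xi)\right\rvert\leq t^{\alpha-1}\left\lvert E_{\alpha,\alpha}\left(ia(\xi)t^\alpha\right)\right\rvert\lesssim t^{\alpha-1}$, because $t^\alpha\lvert a(\xi)\rvert\leq2M$ on the support of $\chi_t$ and $E_{\alpha,\alpha}$ is bounded on bounded sets (Theorem $\ref{Mittag-Leffler function asymptotic expansion}$); on the high part the decomposition $P_t^h=\frac{1}{\Gamma(-\alpha)}t^{-\alpha-1}\mathbf{A}_t^{-2}+R_t^P$ together with Lemma $\ref{lemma linear new approach3}$ ($\left\lVert\mathbf{A}_t^{-2}\phi\right\rVert_H\lesssim t^{2\alpha}\left\lVert\phi\right\rVert_H$) and Lemma $\ref{lemma linear new approach4}$ ($\left\lVert R_t^P\phi\right\rVert_H\lesssim t^{\alpha-1}\left\lVert\phi\right\rVert_H$) gives $\left\lVert P_t^h\phi\right\rVert_H\lesssim t^{\alpha-1}\left\lVert\phi\right\rVert_H$ as well. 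Since $U$ is unitary and $P_t$ acts as multiplication by $b(t,\xi)$ in the spectral variable, this is equivalent to the uniform multiplier bound $\left\lVert b(t,\cdot)\right\rVert_{L^\infty(\Omega)}\lesssim t^{\alpha-1}$, which I will use repeatedly.

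The estimate $(\ref{lemma linear new3 equation1})$ is then immediate from the convolution structure:
\[
\left\lVert Gv(t)\right\rVert_H\leq\int_0^t\left\lVert P_{t-\tau}v(\tau)\right\rVert_H\,d\tau\lesssim\int_0^t(t-\tau)^{\alpha-1}\,d\tau\,\left\lVert v\right\rVert_{L_T^\infty H}=\frac{t^\alpha}{\alpha}\left\lVert v\right\rVert_{L_T^\infty H}\lesssim T^\alpha\left\lVert v\right\rVert_{L_T^\infty H}.
\]
The same computation yields $\left\lVert Gv(t)\right\rVert_H\lesssim t^\alpha\left\lVert v\right\rVert_{L_T^\infty H}\to0=\left\lVert Gv(0)\right\rVert_H$, which settles continuity at $t=0$ (note $Gv(0)=0$).

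For continuity at an interior time I would fix $0<s<t\leq T$ and split
\[
Gv(t)-Gv(s)=\int_s^tP_{t-\tau}v(\tau)\,d\tau+\int_0^s\left(P_{t-\tau}-P_{s-\tau}\right)v(\tau)\,d\tau=:I_1+I_2.
\]
The term $I_1$ is controlled exactly as above by $\int_s^t(t-\tau)^{\alpha-1}d\tau\,\left\lVert v\right\rVert_{L_T^\infty H}\lesssim(t-s)^\alpha\left\lVert v\right\rVert_{L_T^\infty H}\to0$. For $I_2$ I would pass to the spectral side and use Minkowski's integral inequality to get $\left\lVert I_2\right\rVert_H\leq\int_0^s\sup_\xi\left\lvert b(t-\tau,\xi)-b(s-\tau,\xi)\right\rvert\left\lVert v(\tau)\right\rVert_H\,d\tau$, and then invoke dominated convergence. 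The majorant is supplied by the bound above: since $t>s$ one has $\left\lvert b(t-\tau,\xi)-b(s-\tau,\xi)\right\rvert\lesssim(s-\tau)^{\alpha-1}$ uniformly in $\xi$ and in $t$ near $s$, and $(s-\tau)^{\alpha-1}\in L^1(0,s)$; for each fixed $\tau<s$ the continuity of $t\mapsto b(t,\xi)$ on $(0,\infty)$ forces the integrand to $0$ as $t\to s$. Approaching $s$ from the left is symmetric, and this proves $Gv\in C([0,T];H)$.

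I expect the main obstacle to be this last step, more precisely the justification of the dominated-convergence argument for $I_2$: one must combine the uniform-in-$\xi$ pointwise bound $\lvert b(\cdot,\xi)\rvert\lesssim(\cdot)^{\alpha-1}$ (to build an $L^1(0,s)$ majorant that survives the singularity of $P_t$ at $t=0$) with the pointwise-in-$\xi$ continuity of $b$, rather than any uniform operator-norm continuity of $t\mapsto P_t$, which fails as $t\downarrow0$. The difference bounds assembled in Lemma $\ref{8.16lemma linear new approach bilinear3}$ and Lemma $\ref{8.16lemma linear new approach bilinear4}$ offer an alternative route to the convergence of $I_2$ through $P_t^h-P_s^h$, but they would still have to be paired with a low-frequency difference estimate for $P_t^l-P_s^l$, so the multiplier/dominated-convergence formulation is the more economical one.
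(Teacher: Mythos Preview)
Your argument for the estimate $(\ref{lemma linear new3 equation1})$ and for continuity at $t=0$ is exactly what the paper does: both reduce to the pointwise bound $\left\lVert P_t\phi\right\rVert_H\lesssim t^{\alpha-1}\left\lVert\phi\right\rVert_H$ via the $P_t^l/P_t^h$ split and Lemmas~$\ref{lemma linear new approach3}$--$\ref{lemma linear new approach4}$, then integrate.

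For continuity on $(0,T]$ the paper takes a different route. Instead of a dominated-convergence argument, it defers the whole question to Proposition~$\ref{8.16proposition linear new approach bilinear2}$, which proves the quantitative bound
\[
\left\lVert Gv(t)-Gv(s)\right\rVert_H\lesssim\bigl(\lvert t-s\rvert^\alpha+\lvert t^{\alpha+1}-s^{\alpha+1}\rvert\bigr)\left\lVert v\right\rVert_{L_T^\infty H}
\]
by the very decomposition you mention at the end (Lemmas~$\ref{8.16lemma linear new approach bilinear3}$, $\ref{8.16lemma linear new approach bilinear4}$ for $G^h$, plus a direct difference estimate for $P_t^l$). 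Your soft argument is shorter if bare continuity is all one wants, but the paper's explicit H\"older-type bound is reused later in the continuation lemma and the blow-up alternative (Lemma~$\ref{extended function strict solution}$ and the proof of Theorem~$\ref{continuation and blow up alternative}$), so it is not wasted effort.

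One remark on your $I_2$ step: as written, the bound $\left\lVert I_2\right\rVert_H\leq\int_0^s\sup_\xi\lvert b(t-\tau,\xi)-b(s-\tau,\xi)\rvert\,\left\lVert v(\tau)\right\rVert_H\,d\tau$ requires, for the pointwise-in-$\tau$ convergence, that $\sup_\xi\lvert b(t-\tau,\xi)-b(s-\tau,\xi)\rvert\to0$, i.e.\ \emph{operator-norm} continuity of $P_t$ at $s-\tau>0$. Contrary to your concern, this does hold away from $t=0$: since $\partial_t b(t,\xi)=t^{\alpha-2}E_{\alpha,\alpha-1}(ia(\xi)t^\alpha)$ and $E_{\alpha,\alpha-1}$ is bounded on the imaginary axis (Theorem~$\ref{Mittag-Leffler function asymptotic expansion}$ kills the $z^{-1}$ term because $1/\Gamma(-1)=0$), one has $\sup_\xi\lvert\partial_t b(t,\xi)\rvert\lesssim t^{\alpha-2}$, so $t\mapsto P_t$ is even locally Lipschitz in operator norm on $(0,\infty)$. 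The failure is only at $t\downarrow0$, and that is precisely what your $L^1$ majorant $(s-\tau)^{\alpha-1}$ absorbs. So the argument is correct; only the commentary that you are avoiding operator-norm continuity is off.
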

\begin{proof}[Proof]
The proof of continuity for $t>0$ is left to Proposition $\ref{8.16proposition linear new approach bilinear2}$ and the continuity at $t=0$ can be proved by $(\ref{lemma linear new3 proof equation3})$. We just prove $(\ref{lemma linear new3 equation1})$ here. On one hand,
\begin{equation}
\left\lVert G^lv(t)\right\rVert_H\lesssim\int_0^t\left(t-\tau\right)^{\alpha-1}\left\lVert v(\tau)\right\rVert_Hd\tau,
\label{lemma linear new3 proof equation1}
\end{equation}
it follows that
\[ \left\lVert G^lv\right\rVert_{L_T^\infty H}\lesssim T^\alpha\left\lVert v\right\rVert_{L_T^\infty H}. \]
On the other hand, according to Lemma $\ref{lemma linear new approach3}$ and Lemma $\ref{lemma linear new approach4}$, we obtain
\begin{equation}
\left\lVert G^hv(t)\right\rVert_H\lesssim\int_0^t\left(t-\tau\right)^{\alpha-1}\left\lVert v(\tau)\right\rVert_Hd\tau,
\label{lemma linear new3 proof equation2}
\end{equation}
which implies that
\[ \left\lVert G^hv\right\rVert_{L_T^\infty H}\lesssim T^\alpha\left\lVert v\right\rVert_{L_T^\infty H}. \]
Then $(\ref{lemma linear new3 equation1})$ can be proved.
\end{proof}
\begin{remark}
From $(\ref{lemma linear new3 proof equation1})$ and $(\ref{lemma linear new3 proof equation2})$ we also have
\begin{equation}
\left\lVert Gv(t)\right\rVert_H\lesssim\int_0^t\left(t-\tau\right)^{\alpha-1}\left\lVert v(\tau)\right\rVert_Hd\tau.
\label{lemma linear new3 proof equation3}
\end{equation}
In particular, for any $0<T_1<T_2$, if $v\equiv0$ on $[0,T_1]$ and $v\in L^\infty\left((0,T_2);H\right)$, then by $(\ref{lemma linear new3 proof equation3})$ we have
\[ \left\lVert Gv(t)\right\rVert_H\lesssim\int_{T_1}^t\left(t-\tau\right)^{\alpha-1}\left\lVert v(\tau)\right\rVert_Hd\tau \]
and hence
\begin{equation}
\left\lVert Gv\right\rVert_{L_{(T_1,T_2)}^\infty H}\lesssim\left(T_2-T_1\right)^\alpha\left\lVert v\right\rVert_{L_{T_2}^\infty H}.
\label{lemma linear new3 proof equation4}
\end{equation}
\end{remark}
\begin{proposition}
For $T>0$ and $0<t,s<T$, let $v\in L^\infty\left((0,T);H\right)$. We have
\begin{equation}
\left\lVert Gv(t)-Gv(s)\right\rVert_H\lesssim\left(\left\lvert t-s\right\rvert^\alpha+\left\lvert t^{\alpha+1}-s^{\alpha+1}\right\rvert\right)\left\lVert v\right\rVert_{L_T^\infty H}.
\label{8.16proposition linear new approach bilinear2 equation1}
\end{equation}
\label{8.16proposition linear new approach bilinear2}
\end{proposition}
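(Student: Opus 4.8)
The plan is to fix $s<t$ without loss of generality and to handle the low- and high-frequency pieces $G^{l}v$ and $G^{h}v$ separately, in each case using the Duhamel splitting
\[ Gv(t)-Gv(s)=\int_s^tP_{t-\tau}v(\tau)\,d\tau+\int_0^s\left(P_{t-\tau}-P_{s-\tau}\right)v(\tau)\,d\tau \]
together with the analogous identities for $P^{l}$ and $P^{h}$. The first integral, over the new interval $[s,t]$, only feels the pointwise size of the kernel, whereas the second, over $[0,s]$, feels the difference of the kernel at the two shifted times $\sigma=t-\tau$ and $\sigma'=s-\tau$; it is this second integral that will generate the two terms on the right-hand side.

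For the first integral I would invoke the pointwise bounds $\left\lVert P_\sigma^{l}\phi\right\rVert_H,\left\lVert P_\sigma^{h}\phi\right\rVert_H\lesssim\sigma^{\alpha-1}\left\lVert\phi\right\rVert_H$ established inside the proof of Proposition $\ref{lemma linear new3}$, so that
\[ \left\lVert\int_s^tP_{t-\tau}v(\tau)\,d\tau\right\rVert_H\lesssim\int_s^t(t-\tau)^{\alpha-1}\,d\tau\,\left\lVert v\right\rVert_{L_T^\infty H}\lesssim\left\lvert t-s\right\rvert^\alpha\left\lVert v\right\rVert_{L_T^\infty H}, \]
which already produces the $\left\lvert t-s\right\rvert^\alpha$ contribution and is common to both the low and high pieces.

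For the second integral I need difference estimates for the kernels in the shift variable. For the high part this is exactly what Lemma $\ref{8.16lemma linear new approach bilinear3}$ and Lemma $\ref{8.16lemma linear new approach bilinear4}$ supply: writing $P_\sigma^{h}=\frac{1}{\Gamma(-\alpha)}\sigma^{-\alpha-1}\mathbf{A}_\sigma^{-2}+R_\sigma^{P}$, they give $\left\lVert P_\sigma^{h}\phi-P_{\sigma'}^{h}\phi\right\rVert_H\lesssim\left(\left\lvert\sigma^{\alpha-1}-\sigma'^{\alpha-1}\right\rvert+\left\lvert\sigma^\alpha-\sigma'^\alpha\right\rvert\right)\left\lVert\phi\right\rVert_H$. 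For the low part I would prove the companion estimate $\left\lVert P_\sigma^{l}\phi-P_{\sigma'}^{l}\phi\right\rVert_H\lesssim\left\lvert\sigma^{\alpha-1}-\sigma'^{\alpha-1}\right\rvert\left\lVert\phi\right\rVert_H$ inline, mimicking the earlier bilinear lemmas: on the support of $\chi_\sigma$ one has $\left\lvert a(\xi)\right\rvert\lesssim\sigma^{-\alpha}$, whence $\left\lvert\frac{d}{d\sigma}b(\sigma,\xi)\cdot\chi_\sigma\right\rvert\lesssim\sigma^{\alpha-2}$ and $\left\lvert b(\sigma,\xi)\frac{d}{d\sigma}\chi_\sigma\right\rvert\lesssim\sigma^{\alpha-2}$, so the fundamental theorem of calculus yields $\left\lvert b(\sigma,\xi)\chi_\sigma-b(\sigma',\xi)\chi_{\sigma'}\right\rvert\lesssim\left\lvert\sigma^{\alpha-1}-\sigma'^{\alpha-1}\right\rvert$.

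Substituting $\sigma=t-\tau$, $\sigma'=s-\tau$ reduces everything to the two scalar integrals
\[ \int_0^s\left\lvert(t-\tau)^{\alpha-1}-(s-\tau)^{\alpha-1}\right\rvert d\tau\quad\text{and}\quad\int_0^s\left\lvert(t-\tau)^\alpha-(s-\tau)^\alpha\right\rvert d\tau. \]
Evaluating them explicitly gives $\frac{1}{\alpha}\left(s^\alpha-t^\alpha+(t-s)^\alpha\right)$ and $\frac{1}{\alpha+1}\left(t^{\alpha+1}-s^{\alpha+1}-(t-s)^{\alpha+1}\right)$; by the subadditivity of $x\mapsto x^\alpha$ for $0<\alpha<1$ the first is nonnegative and bounded by $(t-s)^\alpha$, and by the superadditivity of $x\mapsto x^{\alpha+1}$ the second is nonnegative and bounded by $\left\lvert t^{\alpha+1}-s^{\alpha+1}\right\rvert$. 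This is the crux of the argument and where I expect the only real friction: the precise powers on the right-hand side are forced by these two elementary but slightly delicate power-function inequalities, and one must also check that the $\left\lvert\sigma^\alpha-\sigma'^\alpha\right\rvert$ term appears only for $G^{h}$ (so that $G^{l}$ contributes purely $(t-s)^\alpha$). Collecting the low and high estimates then gives the claimed bound.
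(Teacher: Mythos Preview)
Your proposal is correct and follows essentially the same route as the paper: the same Duhamel splitting into $\int_s^t$ and $\int_0^s$, the same appeal to Lemma~\ref{8.16lemma linear new approach bilinear3} and Lemma~\ref{8.16lemma linear new approach bilinear4} for the high-frequency difference, the same inline fundamental-theorem-of-calculus argument for $\chi_\sigma b(\sigma,\xi)$, and the same evaluation of the two scalar integrals. The only cosmetic difference is that the paper bounds $\left\lvert b(\sigma,\xi)\frac{d}{d\sigma}\chi_\sigma\right\rvert$ by $\sigma^{\alpha-1}$ rather than your sharper $\sigma^{\alpha-2}$, so its low-frequency difference picks up both a $\left\lvert\sigma^{\alpha-1}-\sigma'^{\alpha-1}\right\rvert$ and a $\left\lvert\sigma^\alpha-\sigma'^\alpha\right\rvert$ term; this is harmless and leads to the same final estimate.
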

\begin{proof}[Proof]
Without loss of generality we assume $t>s$. Let
\begin{align*}
&I_1=\frac{1}{\Gamma(-\alpha)}\int_0^s\left(\left(t-\tau\right)^{-\alpha-1}\mathbf{A}_{t-\tau}^{-2}v(\tau)-\left(s-\tau\right)^{-\alpha-1}\mathbf{A}_{s-\tau}^{-2}v(\tau)\right)d\tau,\\
&I_2=\frac{1}{\Gamma(-\alpha)}\int_s^t\left(t-\tau\right)^{-\alpha-1}\mathbf{A}_{t-\tau}^{-2}v(\tau)d\tau,\\
&I_3=\int_0^s\left(R_{t-\tau}^Pv(\tau)-R_{s-\tau}^Pv(\tau)\right)d\tau,\\
&I_4=\int_s^tR_{t-\tau}^Pv(\tau)d\tau,
\end{align*}
then $G^hv(t)-G^hv(s)=I_1+I_2+I_3+I_4$. According to Lemma $\ref{8.16lemma linear new approach bilinear3}$ and Lemma $\ref{8.16lemma linear new approach bilinear4}$, it follows that
\begin{align*}
\left\lVert I_1\right\rVert_H&\lesssim\int_0^s\left(\left(s-\tau\right)^{\alpha-1}-\left(t-\tau\right)^{\alpha-1}+\left(t-\tau\right)^\alpha-\left(s-\tau\right)^\alpha\right)\left\lVert v(\tau)\right\rVert_Hd\tau\\
&\lesssim\left(\left(t-s\right)^\alpha+t^{\alpha+1}-s^{\alpha+1}\right)\left\lVert v\right\rVert_{L_T^\infty H}
\end{align*}
and also
\[ \left\lVert I_3\right\rVert_H\lesssim\left(\left(t-s\right)^\alpha+t^{\alpha+1}-s^{\alpha+1}\right)\left\lVert v\right\rVert_{L_T^\infty H}. \]
Similarly it follows from Lemma $\ref{lemma linear new approach3}$ and Lemma $\ref{lemma linear new approach4}$ that
\[ \left\lVert I_2\right\rVert_H\lesssim\int_s^t\left(t-\tau\right)^{\alpha-1}\left\lVert v(\tau)\right\rVert_Hd\tau\lesssim\left(t-s\right)^\alpha\left\lVert v\right\rVert_{L_T^\infty H} \]
and also
\[ \left\lVert I_4\right\rVert_H\lesssim\left(t-s\right)^\alpha\left\lVert v\right\rVert_{L_T^\infty H}. \]
Then there holds
\begin{equation}
\left\lVert G^hv(t)-G^hv(s)\right\rVert_H\lesssim\left(\left(t-s\right)^\alpha+t^{\alpha+1}-s^{\alpha+1}\right)\left\lVert v\right\rVert_{L_T^\infty H}.
\label{8.16proposition linear new approach bilinear2 proof equation1}
\end{equation}
On the other hand, since
\begin{align*}
\left\lvert\chi_tb(t,\xi)-\chi_sb(s,\xi)\right\rvert&=\left\lvert\int_s^t\frac{d}{d\tau}\left(\chi_\tau b(\tau,\xi)\right)d\tau\right\rvert\\
&=\left\lvert\int_s^tb(\tau,\xi)\frac{d}{d\tau}\chi_\tau+\chi_\tau\tau^{\alpha-2}E_{\alpha,\alpha-1}\left(ia(\xi)\tau^\alpha\right)d\tau\right\rvert\\
&\lesssim\int_s^t\tau^{\alpha-1}+\tau^{\alpha-2}d\tau\\
&\lesssim t^\alpha-s^\alpha+s^{\alpha-1}-t^{\alpha-1},
\end{align*}
it follows that
\begin{align*}
\left\lVert P_t^l\phi-P_s^l\phi\right\rVert_H&=\left\lVert\chi_tb(t,\xi)U^{-1}\phi-\chi_sb(s,\xi)U^{-1}\phi\right\rVert_{L^2(\Omega)}\\
&\lesssim\left(t^\alpha-s^\alpha+s^{\alpha-1}-t^{\alpha-1}\right)\left\lVert\phi\right\rVert_H,
\end{align*}
which implies that
\[ \left\lVert I_5\right\rVert_H\lesssim\left(\left(t-s\right)^\alpha+t^{\alpha+1}-s^{\alpha+1}\right)\left\lVert v\right\rVert_{L_T^\infty H} \]
by letting
\begin{align*}
&I_5=\int_0^s\left(P_{t-\tau}^lv(\tau)-P_{s-\tau}^lv(\tau)\right)d\tau,\\
&I_6=\int_s^tP_{t-\tau}^lv(\tau)d\tau.
\end{align*}
There holds $G^lv(t)-G^lv(s)=I_5+I_6$. It's easy to verify that
\[ \left\lVert I_6\right\rVert_H\lesssim\left(t-s\right)^\alpha\left\lVert v\right\rVert_{L_T^\infty H}. \]
Then we have
\begin{equation}
\left\lVert G^lv(t)-G^lv(s)\right\rVert_H\lesssim\left(\left(t-s\right)^\alpha+t^{\alpha+1}-s^{\alpha+1}\right)\left\lVert v\right\rVert_{L_T^\infty H}.
\label{8.16proposition linear new approach bilinear2 proof equation2}
\end{equation}
Combining $(\ref{8.16proposition linear new approach bilinear2 proof equation1})$ and $(\ref{8.16proposition linear new approach bilinear2 proof equation2})$ we can obtain the result.
\end{proof}
\begin{proposition}
If $Gv(t)\in D(A)$ for $t>0$, then $g_{1-\alpha}*Gv$ is differentiable for $t>0$ and
\begin{equation}
\mathbf{D}_t^\alpha Gv(t)=iAGv(t)+v(t).
\label{lemma linear new6 equation1}
\end{equation}
\label{lemma linear new6}
\end{proposition}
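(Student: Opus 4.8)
The plan is to transfer everything to the spectral side, where $A$ acts as multiplication by $a(\xi)$ and $G$ becomes a scalar time--convolution, and then to use that the Riemann--Liouville integration of order $1-\alpha$ carries the kernel $b$ to the kernel $a$. Writing $w=U^{-1}v$, we have $U^{-1}Gv(t)=\int_0^t b(t-\tau,\xi)w(\tau)\,d\tau=(b*w)(t)$, so by the associativity and commutativity of the time convolution together with the unitarity of $U$,
\[ U^{-1}\left(g_{1-\alpha}*Gv\right)=\left(g_{1-\alpha}*b\right)*w. \]
The first step is to record the identity $g_{1-\alpha}*b=a$. Expanding $b(t,\xi)=\sum_{k\ge0}\frac{\left(ia(\xi)\right)^k t^{\alpha k+\alpha-1}}{\Gamma(\alpha k+\alpha)}$ and integrating term by term with the Beta integral $\int_0^t(t-\tau)^{-\alpha}\tau^{\alpha k+\alpha-1}\,d\tau=\frac{\Gamma(1-\alpha)\Gamma(\alpha k+\alpha)}{\Gamma(\alpha k+1)}t^{\alpha k}$ collapses the series to $\sum_{k\ge0}\frac{\left(ia(\xi)t^\alpha\right)^k}{\Gamma(\alpha k+1)}=E_{\alpha,1}\left(ia(\xi)t^\alpha\right)=a(t,\xi)$; this is the Mittag--Leffler fractional--integration identity of Appendix \ref{9.17On the Mittag-Leffler functions}. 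Hence $g_{1-\alpha}*Gv=U\left(a*w\right)$ with $a(\cdot,\xi)$ bounded.

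Next I would differentiate $a*w$ term by term, exactly in the spirit of the proof of Proposition \ref{lemma linear new4}. From
\[ g_{1-\alpha}*Gv=U\left(\sum_{k\ge0}\frac{\left(ia(\xi)\right)^k}{\Gamma(\alpha k+1)}\int_0^t(t-\tau)^{\alpha k}w(\tau)\,d\tau\right) \]
the $k=0$ summand is $\int_0^t w(\tau)\,d\tau$, whose derivative is $w(t)$ (a.e., at Lebesgue points of $v$) and contributes $Uw(t)=v(t)$. For $k\ge1$, using $\Gamma(\alpha k+1)=\alpha k\,\Gamma(\alpha k)$,
\[ \frac{d}{dt}\left(\frac{\left(ia(\xi)\right)^k}{\Gamma(\alpha k+1)}\int_0^t(t-\tau)^{\alpha k}w\,d\tau\right)=\frac{\left(ia(\xi)\right)^k}{\Gamma(\alpha k)}\int_0^t(t-\tau)^{\alpha k-1}w\,d\tau, \]
and summing over $k\ge1$ and reindexing by $j=k-1$ reassembles the series into $ia(\xi)\int_0^t b(t-\tau,\xi)w(\tau)\,d\tau=ia(\xi)(b*w)(t)=ia(\xi)U^{-1}Gv(t)$. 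Applying $U$ gives, formally,
\[ \frac{d}{dt}\left(g_{1-\alpha}*Gv\right)=v(t)+iU\left(a(\xi)U^{-1}Gv(t)\right)=v(t)+iAGv(t), \]
which is the asserted formula $\mathbf{D}_t^\alpha Gv=iAGv+v$.

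The hard part is justifying this term--by--term differentiation as a genuine limit in $H=U\left(L^2(\Omega)\right)$, and this is exactly where the hypothesis $Gv(t)\in D(A)$ enters. One cannot simply interchange $A$ with the $\tau$--integral: the multiplier bounds of this section give $\left\lVert AP_s\phi\right\rVert_H\lesssim s^{-1}\left\lVert\phi\right\rVert_H$, so $\int_0^t AP_{t-\tau}v(\tau)\,d\tau$ is not absolutely convergent, and $AGv(t)$ is meaningful only as $A$ applied to the convergent integral $Gv(t)$, which the hypothesis places in $D(A)$ and thereby guarantees $a(\xi)U^{-1}Gv(t)\in L^2(\Omega)$. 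To make the computation rigorous I would, as the reassembled series suggests, represent the difference quotient of the $k\ge1$ part through the scalar fundamental theorem of calculus as $\frac1h\int_t^{t+h}U^{-1}\left(iAGv\right)(r)\,dr$ (legitimate pointwise in $\xi$ because $b(\cdot,\xi)\in L^1_{\mathrm{loc}}$ and $a(0,\xi)-1=0$ removes the boundary term), and then estimate in $L^2(\Omega)$ by Minkowski's inequality,
\[ \left\lVert\frac1h\int_t^{t+h}\left(U^{-1}(AGv)(r)-U^{-1}(AGv)(t)\right)dr\right\rVert_{L^2(\Omega)}\le\frac1h\int_t^{t+h}\left\lVert AGv(r)-AGv(t)\right\rVert_H\,dr, \]
which tends to $0$ by the continuity of $t\mapsto AGv(t)$ on $(0,\infty)$; this continuity is the $D(A)$--analogue of the $H$--continuity in Proposition \ref{8.16proposition linear new approach bilinear2} and follows from the same spectral multiplier estimates of Section \ref{9.17Linear estimate. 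The well-posedness of the linear equation} (Lemma \ref{lemma linear new approach3} and Lemma \ref{lemma linear new approach4}). Together with the clean differentiation of the $k=0$ term, this yields both the differentiability of $g_{1-\alpha}*Gv$ and the identity.
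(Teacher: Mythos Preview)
Your approach is essentially the paper's: transfer to the spectral side, use the identity $g_{1-\alpha}*b(\cdot,\xi)=a(\cdot,\xi)$ to rewrite $g_{1-\alpha}*Gv=U\bigl(\int_0^t a(t-s,\xi)\,U^{-1}v(s)\,ds\bigr)$, and then differentiate to obtain $iAGv(t)+v(t)$. The only cosmetic difference is that the paper isolates the $v(t)$ contribution by splitting the difference quotient over the integration domain ($\int_0^{t+h}=\int_0^t+\int_t^{t+h}$, the boundary piece producing $a(0,\xi)U^{-1}v(t)=U^{-1}v(t)$), whereas you isolate it as the $k=0$ term of the power series for $a$; the two decompositions yield the same pair of limits, and the paper invokes the hypothesis $Gv(t)\in D(A)$ at the same final step you do.
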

\begin{proof}[Proof]
Let $\Phi(t)=g_{1-\alpha}*Gv$. Since
\begin{align*}
\Phi(t)&=\frac{1}{\Gamma(1-\alpha)}\int_0^t\int_0^\tau\left(t-\tau\right)^{-\alpha}P_{\tau-s}v(s)dsd\tau\\
&=U\left(\frac{1}{\Gamma(1-\alpha)}\int_0^t\int_0^\tau\left(t-\tau\right)^{-\alpha}b(\tau-s,\xi)U^{-1}v(s)dsd\tau\right)\\
&=U\left(\frac{1}{\Gamma(1-\alpha)}\int_s^t\int_0^t\left(t-\tau\right)^{-\alpha}b(\tau-s,\xi)U^{-1}v(s)dsd\tau\right)\\
&=U\left(\int_0^ta(t-s,\xi)U^{-1}v(s)ds\right),
\end{align*}
we obtain
\begin{align*}
&\lim\limits_{h\to0}\frac{\Phi(t+h)-\Phi(t)}{h}\\
&=U\left(\int_0^t\lim\limits_{h\to0}\frac{a(t+h-s,\xi)U^{-1}v(s)-a(t-s,\xi)U^{-1}v(s)}{h}ds\right)\\
&+U\left(\lim\limits_{h\to0}\frac{1}{h}\int_t^{t+h}a(t+h-s,\xi)U^{-1}v(s)ds\right)\\
&=iU\left(a(\xi)\int_0^tb(t-s,\xi)U^{-1}v(s)ds\right)+v(t).
\end{align*}
Since $Gv(t)\in D(A)$, we can deduce $\Phi(t)$ is differentiable and
\[ \mathbf{D}_t^\alpha Gv(t)=iAGv(t)+v(t). \]
\end{proof}
\begin{theorem}
For $T>0$, let $x\in H$ and $F\in L^\infty\left((0,T);H\right)$. The linear $(\ref{nonlinear Schrodinger equation Hilbert})$ has a unique mild solution $u$ on $[0,T]$ with the estimate
\begin{equation}
\left\lVert u(t)\right\rVert_H\lesssim\left\lVert x\right\rVert_H+T^\alpha\left\lVert F\right\rVert_{L_T^\infty H}.
\label{mild solution inhomogeneous H equation1}
\end{equation}
\label{mild solution inhomogeneous H}
\end{theorem}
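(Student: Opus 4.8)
The plan is to read off this theorem as a direct corollary of the two mapping propositions already established, since the mild solution is \emph{defined} by the explicit formula $(\ref{mild solution1})$, namely $u(t)=S_tx+iGF(t)$. Thus there is nothing to "solve": existence reduces to checking that this prescribed function genuinely lies in the required class $C\left([0,T];H\right)$, and uniqueness is automatic once the solution concept is unwound.

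First I would treat membership and the estimate simultaneously by handling the two summands of $u=S_tx+iGF$ separately. For the first, Proposition $\ref{lemma linear new2}$ supplies $S_tx\in C\left([0,T];H\right)$ together with the bound $(\ref{lemmalemma linearlinear new2 equation2})$, i.e. $\left\lVert S_tx\right\rVert_H\lesssim\left\lVert x\right\rVert_H$ uniformly on $[0,T]$. For the second, since $F\in L^\infty\left((0,T);H\right)$, Proposition $\ref{lemma linear new3}$ gives $GF\in C\left([0,T];H\right)$ with $(\ref{lemma linear new3 equation1})$, i.e. $\left\lVert GF\right\rVert_{L_T^\infty H}\lesssim T^\alpha\left\lVert F\right\rVert_{L_T^\infty H}$. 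By linearity the sum $u=S_tx+iGF$ is again in $C\left([0,T];H\right)$, so $u$ qualifies as a mild solution in the sense of Definition $\ref{linear mild solution H}$, and the triangle inequality immediately yields $(\ref{mild solution inhomogeneous H equation1})$,
\[ \left\lVert u(t)\right\rVert_H\leq\left\lVert S_tx\right\rVert_H+\left\lVert GF(t)\right\rVert_H\lesssim\left\lVert x\right\rVert_H+T^\alpha\left\lVert F\right\rVert_{L_T^\infty H}. \]

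For uniqueness I would simply observe that Definition $\ref{linear mild solution H}$ declares the mild solution to be precisely the function furnished by $(\ref{mild solution1})$; hence for fixed data $x$ and $F$ there is exactly one such function, and any two mild solutions necessarily coincide. In this sense the genuine analytic content of the statement has already been expended in the preceding lemmas and propositions: the real obstacle was establishing continuity up to $t=0$ and the boundedness of the solution operators $S_t$ and $G$ — the delicate points being the continuity of $S_t$ at the origin, handled via dominated convergence, and the integrability near the singularity $(t-\tau)^{\alpha-1}$ controlling $G$. Given those facts, only their assembly remains here.
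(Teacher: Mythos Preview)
Your proposal is correct and follows exactly the paper's approach: the paper's proof is the single sentence that the result is a direct consequence of Proposition~$\ref{lemma linear new2}$ and Proposition~$\ref{lemma linear new3}$. Your elaboration that uniqueness is immediate from Definition~$\ref{linear mild solution H}$ and that existence plus the estimate follow by applying the two propositions to the summands of $u=S_tx+iGF$ is precisely the intended reading.
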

\begin{proof}[Proof]
It's just a direct consequence of Proposition $\ref{lemma linear new2}$ and Proposition $\ref{lemma linear new3}$.
\end{proof}
\begin{theorem}
For $T>0$, let $F\in L^\infty\left((0,T);H\right)\cap C\left((0,T];H\right)$. The linear $(\ref{nonlinear Schrodinger equation Hilbert})$ has a unique classical solution $u$ on $[0,T]$ for every $x\in H$ if and only if $GF\in C\left((0,T];D(A)\right)$.
\label{linear classical solution theorem}
\end{theorem}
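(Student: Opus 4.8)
The plan is to reduce both implications to a single representation fact: \emph{every classical solution of the linear $(\ref{nonlinear Schrodinger equation Hilbert})$ with datum $x$ coincides with the explicit mild solution $u=S_tx+iGF$ of $(\ref{mild solution1})$}. Once this is available, the ``if'' part reduces to checking that $S_tx+iGF$ is genuinely a classical solution (using the propositions of this section), while the ``only if'' part reduces to reading off the regularity of $GF$ from that of the classical solution at $x=0$.

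\textbf{Sufficiency.} Assuming $GF\in C((0,T];D(A))$, I would set $u=S_tx+iGF$ and first collect the regularity: Proposition $\ref{lemma linear new2}$ gives $S_tx\in C((0,T];D(A))\cap C([0,T];H)$ with $S_0x=x$, Proposition $\ref{lemma linear new3}$ gives $GF\in C([0,T];H)$ with $GF(0)=0$, and the hypothesis supplies $GF\in C((0,T];D(A))$; hence $u\in C((0,T];D(A))\cap C([0,T];H)$ and $u(0)=x$. To verify the equation, I would use $D_t^\alpha u=\mathbf{D}_t^\alpha(u-x)$ (Caputo versus Riemann--Liouville, Appendix $\ref{9.1Fractionl integral and derivative definition appendix}$) together with Proposition $\ref{lemma linear new4}$ (namely $\mathbf{D}_t^\alpha(S_tx-x)=iAS_tx$) and Proposition $\ref{lemma linear new6}$ (namely $\mathbf{D}_t^\alpha GF=iAGF+F$, applicable since $GF(t)\in D(A)$), obtaining
\[ \mathbf{D}_t^\alpha(u-x)=iAS_tx+i(iAGF+F)=iA(S_tx+iGF)+iF=iAu+iF, \]
so that $iD_t^\alpha u+Au+F=0$; moreover $\mathbf{D}_t^\alpha(u-x)=iAu+iF\in C((0,T];H)$ because $u\in C((0,T];D(A))$ and $F\in C((0,T];H)$. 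Thus $u$ is a classical solution.

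\textbf{The representation.} This is the technical heart, and I expect it to be the main obstacle. Let $u$ be any classical solution with datum $x$, and let $U:L^2(\Omega)\to H$ be the unitary furnished by the spectral theorem (Appendix $\ref{9.5appendix The spectral theorem of the selfadjoint operator}$) diagonalizing $A$ into multiplication by $a(\xi)$. The classical solution satisfies $\mathbf{D}_t^\alpha(u-x)=iAu+iF$ in $C((0,T];H)$; applying the bounded operator $U^{-1}$, which commutes with the time convolution defining $\mathbf{D}_t^\alpha$ and, since $u(t)\in D(A)$ for $t>0$, sends $Au$ to $a(\xi)U^{-1}u$, I would arrive at the scalar Cauchy problem
\[ D_t^\alpha y(\cdot,\xi)=ia(\xi)\,y(\cdot,\xi)+i(U^{-1}F)(\cdot,\xi),\quad y(0,\xi)=(U^{-1}x)(\xi) \]
for $y=U^{-1}u$, valid for a.e.\ $\xi$. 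Solving this scalar fractional ODE by the Mittag-Leffler functions yields
\[ y(t,\xi)=a(t,\xi)(U^{-1}x)(\xi)+i\int_0^t b(t-s,\xi)(U^{-1}F)(s,\xi)\,ds=\bigl(U^{-1}(S_tx+iGF)\bigr)(\xi), \]
whence $u=S_tx+iGF$. In particular the classical solution is unique, which completes the ``if'' part. The delicate point here is to justify the spectral reduction rigorously: the abstract identity must be tested against the spectral measure so as to produce the scalar problem for a.e.\ $\xi$ \emph{uniformly in $t$}, rather than for each fixed $t$ on an $\xi$-dependent null set; I would handle this by working on a countable dense set of times and invoking the continuity of $u$ in $D(A)$, and then solve the scalar fractional Cauchy problem within the prescribed regularity class.

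\textbf{Necessity.} Assuming a (necessarily unique) classical solution exists for every $x\in H$, I would specialize to $x=0$. By the representation the classical solution $u$ then equals $iGF$; since a classical solution lies in $C((0,T];D(A))$ by Definition $\ref{linear classical solution H}$, it follows that $GF=-iu\in C((0,T];D(A))$, which is exactly the asserted condition.
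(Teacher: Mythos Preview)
Your argument is correct and, for both directions, follows the paper's own line: the paper's proof is essentially the two sentences ``if $u$ is a classical solution then $GF=u-S_tx\in C((0,T];D(A))$ by Proposition~\ref{lemma linear new2}'' and ``if $GF\in C((0,T];D(A))$, apply Proposition~\ref{lemma linear new6}''. Your sufficiency argument is exactly this, just with the regularity bookkeeping spelled out (Propositions~\ref{lemma linear new2}, \ref{lemma linear new3}, \ref{lemma linear new4}, \ref{lemma linear new6}).

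The one substantive difference is that you isolate and prove the representation $u=S_tx+iGF$ for an arbitrary classical solution via spectral reduction to a scalar fractional ODE, while the paper simply writes $GF=u-S_tx$ and moves on, taking this identification (and hence uniqueness) for granted. So what you flag as ``the main obstacle'' is in fact a point the paper treats as implicit; your spectral argument fills that gap and is the natural way to do so given the framework of Appendix~\ref{9.5appendix The spectral theorem of the selfadjoint operator}. A minor stylistic difference: for necessity the paper does not specialize to $x=0$ but subtracts $S_tx$ for a general $x$ (using Proposition~\ref{lemma linear new2} to place $S_tx$ in $C((0,T];D(A))$); your choice $x=0$ is equally valid and slightly cleaner.
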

\begin{proof}[Proof]
If $u$ is a classical solution of the linear $(\ref{nonlinear Schrodinger equation Hilbert})$, then $GF=u-S_tx\in C\left((0,T];D(A)\right)$ by applying Proposition $\ref{lemma linear new2}$. If $GF\in C\left((0,T];D(A)\right)$, we can complete the proof applying Proposition $\ref{lemma linear new6}$ with the assumption that $F\in C\left((0,T];H\right)$.
\end{proof}
\begin{theorem}
For $T>0$, let $F\in C\left([0,T];D(A)\right)$. The linear $(\ref{nonlinear Schrodinger equation Hilbert})$ has a unique strict solution $u$ on $[0,T]$ for every $x\in D(A)$.
\label{linear strict solution theorem}
\end{theorem}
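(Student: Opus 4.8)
The plan is to take as candidate the unique mild solution $u(t)=S_tx+iGF(t)$ furnished by Theorem $\ref{mild solution inhomogeneous H}$ (note $F\in C([0,T];D(A))\subset L^\infty((0,T);H)$ and $x\in D(A)\subset H$, and $u(0)=S_0x+iGF(0)=x$). Since every strict solution is in particular a mild solution, uniqueness in the strict class is immediate from the uniqueness of the mild solution. Hence the whole task reduces to upgrading the regularity of this specific $u$ so that it meets Definition $\ref{linear strict solution H}$: I must show $u\in C\left([0,T];D(A)\right)$ and $\mathbf{D}_t^\alpha(u-x)\in C\left([0,T];H\right)$, together with the equation.

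The central device is that $S_t$, $P_t$ (hence $G$) and $A$ are all functions of the selfadjoint operator $A$ through the representation $U(\cdot)$, so they commute. Concretely, for $x\in D(A)$ one has $AS_tx=U\left(a(\xi)a(t,\xi)U^{-1}x\right)=S_t(Ax)$, and for $F(\tau)\in D(A)$ one has $AP_sF(\tau)=P_s\left(AF(\tau)\right)$. Because $A$ is closed and, by Lemma $\ref{lemma linear new approach3}$ and Lemma $\ref{lemma linear new approach4}$ together with the low-frequency bound $\lvert\chi_sb(s,\xi)\rvert\lesssim s^{\alpha-1}$, the integrand satisfies $\left\lVert AP_{t-\tau}F(\tau)\right\rVert_H\lesssim(t-\tau)^{\alpha-1}\left\lVert AF(\tau)\right\rVert_H$, which is integrable, I can pull $A$ through the Bochner integral to obtain $AGF=G(AF)$. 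In particular $u(t)\in D(A)$ for every $t\in[0,T]$ and $Au=S_t(Ax)+iG(AF)$.

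First I would establish $u\in C\left([0,T];D(A)\right)$. The identities above reduce this to the $H$-continuity of $S_t(Ax)$ and $G(AF)$: since $Ax\in H$ and $AF\in C\left([0,T];H\right)$, Proposition $\ref{lemma linear new2}$ gives $S_t(Ax)\in C\left([0,T];H\right)$ and Proposition $\ref{lemma linear new3}$ gives $G(AF)\in C\left([0,T];H\right)$; combined with $u\in C\left([0,T];H\right)$ this yields $u\in C\left([0,T];D(A)\right)$, where continuity holds up to $t=0$ precisely because the $D(A)$-data hypothesis removes the $t^{-\alpha}$ singularity present in the generic estimate $(\ref{lemmalemma linearlinear new2 equation1})$. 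Next I would compute the fractional derivative. Writing $u-x=(S_tx-x)+iGF$ and applying Proposition $\ref{lemma linear new4}$ to the first term and Proposition $\ref{lemma linear new6}$ to the second (legitimate since $GF(t)\in D(A)$ for all $t$), I obtain $\mathbf{D}_t^\alpha(u-x)=iAS_tx-AGF+iF=iS_t(Ax)-G(AF)+iF$, which lies in $C\left([0,T];H\right)$ by the previous step. Substituting this into $i\mathbf{D}_t^\alpha(u-x)+Au+F$ and using $Au=S_t(Ax)+iG(AF)$ shows the expression telescopes to $0$, so $u$ solves the linear equation and belongs to the strict class.

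I expect the main obstacle to be the rigorous justification of $AGF=G(AF)$ — that is, commuting the unbounded $A$ with the Bochner integral — which is where the closedness of $A$, the functional-calculus commutation $AP_s=P_sA$ on $D(A)$, and the integrability of the kernel $(t-\tau)^{\alpha-1}$ must all be invoked together. Once this identity is secured, the remaining assertions are direct applications of the linear propositions already proved, and the continuity up to $t=0$ follows simply by transferring the $H$-estimates from $x,F$ to $Ax,AF$.
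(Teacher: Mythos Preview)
Your proposal is correct and follows essentially the same route as the paper, which dispatches the theorem in a single sentence by noting that $G$ maps $C\left([0,T];D(A)\right)$ into itself (via Proposition $\ref{lemma linear new3}$) and leaves the rest implicit. You have simply unpacked that one line: the commutation $AS_t=S_tA$ and $AGF=G(AF)$ through the spectral representation and closedness of $A$, followed by the fractional-derivative identities of Propositions $\ref{lemma linear new4}$ and $\ref{lemma linear new6}$, is exactly the machinery the paper is tacitly invoking.
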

\begin{proof}[Proof]
The proof is clear noting that $G$ maps $C\left([0,T];D(A)\right)$ into $C\left([0,T];D(A)\right)$ by Proposition $\ref{lemma linear new3}$.
\end{proof}
\section{Proof of Theorem $\ref{unique mild solution X}$}
\label{9.17Proof of Theorem unique mild solution X}
We start with $(\ref{unique mild solution X theorem equation1})$ which also implies that the strict solution of $(\ref{nonlinear Schrodinger equation Hilbert})$ is unique. To this end, we state the following Gronwall type inequality which you can find in Henry\cite{Geometric-Theory-of-Semilinear-Parabolic-Equations} and Yagi\cite{Abstract-Parabolic-Evolution-Equations-and-their-Applications}.
\begin{lemma}
Let $0\leq a\in C\left([0,T];\mathbb{R}\right)$ be an increasing function, let $b>0$ be a constant and $\alpha>0$ be an exponent. If $u\in C\left([0,T];\mathbb{R}\right)$ satisfies the integral inequality
\[ u(t)\leq a(t)+b\int_0^t\left(t-s\right)^{\alpha-1}u(s)ds,\quad 0\leq t\leq T, \]
on this inteval, then
\[ u(t)\leq a(t)E_{\alpha,1}\left(b\Gamma(\alpha)t^\alpha\right),\quad 0\leq t\leq T. \]
\label{Yagislemma}
\end{lemma}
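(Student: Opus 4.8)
The plan is to prove the inequality by Picard-style iteration of the integral operator, exploiting the convolution structure of the kernel $(t-s)^{\alpha-1}$. I introduce the linear operator $B$ acting on $C\left([0,T];\mathbb{R}\right)$ by
\[ (Bw)(t)=b\int_0^t(t-s)^{\alpha-1}w(s)\,ds, \]
so that the hypothesis reads $u\leq a+Bu$ pointwise on $[0,T]$. Since $(t-s)^{\alpha-1}\geq0$ and $b>0$, the operator $B$ is order-preserving: $w_1\leq w_2$ implies $Bw_1\leq Bw_2$. Applying $B$ repeatedly to $u\leq a+Bu$ and using linearity to substitute, I would obtain for every $N\geq1$ the estimate
\[ u(t)\leq\sum_{n=0}^{N-1}(B^na)(t)+(B^Nu)(t),\quad 0\leq t\leq T. \]

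The heart of the argument is the computation of the iterated kernel. Writing $(B^nw)(t)=\int_0^tk_n(t,s)w(s)\,ds$ with $k_1(t,s)=b(t-s)^{\alpha-1}$ and $k_{n+1}(t,s)=\int_s^tk_1(t,\tau)k_n(\tau,s)\,d\tau$, I would prove by induction, using the Beta-function identity
\[ \int_s^t(t-\tau)^{p-1}(\tau-s)^{q-1}\,d\tau=(t-s)^{p+q-1}\frac{\Gamma(p)\Gamma(q)}{\Gamma(p+q)}, \]
that $k_n(t,s)=\dfrac{(b\Gamma(\alpha))^n}{\Gamma(n\alpha)}(t-s)^{n\alpha-1}$. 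Because $a$ is nondecreasing we have $a(s)\leq a(t)$ for $0\leq s\leq t$, and since $n\alpha\,\Gamma(n\alpha)=\Gamma(n\alpha+1)$ this yields
\[ (B^na)(t)\leq a(t)\int_0^tk_n(t,s)\,ds=a(t)\frac{(b\Gamma(\alpha))^n}{\Gamma(n\alpha)}\cdot\frac{t^{n\alpha}}{n\alpha}=a(t)\frac{\left(b\Gamma(\alpha)t^\alpha\right)^n}{\Gamma(n\alpha+1)}. \]

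It then remains to let $N\to\infty$. Since $u$ is continuous on the compact interval $[0,T]$ it is bounded, say $u\leq K$; the same kernel computation with the constant $K$ in place of $a$, together with the monotonicity of $B^N$, gives $(B^Nu)(t)\leq K\left(b\Gamma(\alpha)T^\alpha\right)^N/\Gamma(N\alpha+1)$, which tends to $0$ as $N\to\infty$ uniformly on $[0,T]$ because the series $\sum_nz^n/\Gamma(n\alpha+1)=E_{\alpha,1}(z)$ converges for all $z$. Passing to the limit and summing the majorant yields
\[ u(t)\leq a(t)\sum_{n=0}^\infty\frac{\left(b\Gamma(\alpha)t^\alpha\right)^n}{\Gamma(n\alpha+1)}=a(t)E_{\alpha,1}\left(b\Gamma(\alpha)t^\alpha\right), \]
which is the claim. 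The main obstacle is purely the bookkeeping in the induction for $k_n$: one must track the Gamma factors so that the Beta identity telescopes into exactly $\Gamma(n\alpha)$ in the denominator, since this is precisely what reconstitutes the Mittag-Leffler series $E_{\alpha,1}$ rather than an ordinary exponential. Once the kernel formula is established, the monotonicity bound and the vanishing of the remainder are routine, and the argument follows the classical treatments of Henry and Yagi cited above.
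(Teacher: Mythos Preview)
Your proof is correct and is precisely the standard Picard-iteration argument for the fractional Gronwall inequality. The paper does not actually prove this lemma; it merely states it and cites Henry and Yagi for the proof, and what you have written is essentially the argument found in those references.
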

\begin{theorem}
Let the assumptions in Theorem $\ref{unique mild solution X}$ hold and $u(t),v(t)$ be the strict solutions of $(\ref{nonlinear Schrodinger equation Hilbert})$ with the initial data $x,y$ respectively, then $(\ref{unique mild solution X theorem equation1})$ holds.
\label{uniqueness}
\end{theorem}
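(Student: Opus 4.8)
The plan is to derive an a priori integral inequality for $w(t):=u(t)-v(t)$ measured in $D(A)$ and then close it with the Gronwall-type Lemma~\ref{Yagislemma}. The starting point is that every strict solution satisfies the Duhamel representation. Indeed, if $u$ is a strict solution of $(\ref{nonlinear Schrodinger equation Hilbert})$, set $g(t):=F(u(t))$; since $u\in C\left([0,T];D(A)\right)$, Assumption~\ref{AssumptionA'} (using $F(0)=0$ for the bound, and comparison of $u$ with its time-translates for the modulus of continuity) gives $g\in C\left([0,T];D(A)\right)$, so $u$ is a strict solution of the linear $(\ref{nonlinear Schrodinger equation Hilbert})$ with forcing $g$. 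By the uniqueness part of Theorem~\ref{linear strict solution theorem} it coincides with the corresponding mild solution, i.e.\ $u(t)=S_tx+iG\big(F(u)\big)(t)$, and likewise for $v$. Subtracting gives
\[
w(t)=S_t(x-y)+iG\big(F(u)-F(v)\big)(t).
\]

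Next I would establish the two $D(A)$-estimates that the right-hand side requires; these are the $D(A)$-data refinements of Proposition~\ref{lemma linear new2} and Proposition~\ref{lemma linear new3}. For $S_t$, since $A$ is selfadjoint its multiplier $a(\xi)$ is real, and the factor $a(t,\xi)=E_{\alpha,1}(ia(\xi)t^\alpha)$ is uniformly bounded on the imaginary axis (boundedness near the origin together with the decay from Theorem~\ref{Mittag-Leffler function asymptotic expansion}); hence $|a(t,\xi)|\lesssim1$ and, for $\phi\in D(A)$,
\[
\left\lVert S_t\phi\right\rVert_{D(A)}=\left\lVert a(t,\xi)U^{-1}\phi\right\rVert_{L^2(\Omega)}+\left\lVert a(\xi)a(t,\xi)U^{-1}\phi\right\rVert_{L^2(\Omega)}\lesssim\left\lVert\phi\right\rVert_{D(A)},
\]
uniformly in $t\geq0$. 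For $G$ the key claim is
\[
\left\lVert Gh(t)\right\rVert_{D(A)}\lesssim\int_0^t(t-\tau)^{\alpha-1}\left\lVert h(\tau)\right\rVert_{D(A)}d\tau,\qquad h\in L^\infty\left((0,T);D(A)\right),
\]
which I expect to be the main obstacle. The $H$-component is already contained in the proof of Proposition~\ref{lemma linear new3}; for the $A$-component I would run the same splitting $G=G^l+G^h$ and apply the $A$-versions of Lemmas~\ref{lemma linear new approach3} and~\ref{lemma linear new approach4}, now exploiting $\phi\in D(A)$. The point is that on the support of $\chi_s^c$ one has $\lvert a(\xi)^{-1}\chi_s^c\rvert\lesssim s^\alpha$, so writing $a(\xi)^{-k}\chi_s^c=a(\xi)^{-k-1}\chi_s^c\cdot a(\xi)$ trades one inverse power of $a(\xi)$ against a factor $s^\alpha$; this turns the singular prefactors $s^{-\alpha-1}$ (from $\mathbf{A}_s^{-2}$) and $s^{-2\alpha-1}$ (from $R_s^P$) into the integrable kernel $s^{\alpha-1}\left\lVert\phi\right\rVert_{D(A)}$, and likewise $\left\lVert AP_s^l\phi\right\rVert_H\lesssim s^{\alpha-1}\left\lVert\phi\right\rVert_{D(A)}$ since $\lvert\chi_s b(s,\xi)\rvert\lesssim s^{\alpha-1}$. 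Summing yields $\left\lVert AGh(t)\right\rVert_H\lesssim\int_0^t(t-\tau)^{\alpha-1}\left\lVert h(\tau)\right\rVert_{D(A)}d\tau$.

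Finally I would close the estimate. Because $u,v$ are strict solutions, $\left\lVert u(\tau)\right\rVert_{D(A)}$ and $\left\lVert v(\tau)\right\rVert_{D(A)}$ are bounded on $[0,T]$, so Assumption~\ref{AssumptionA'} gives, a.e.,
\[
\left\lVert F(u(\tau))-F(v(\tau))\right\rVert_{D(A)}\leq C\left\lVert u(\tau)-v(\tau)\right\rVert_{D(A)},
\]
with $C$ depending only on $\rho(x),\rho(y)$ and $\left\lVert u\right\rVert_{L_T^\infty D(A)},\left\lVert v\right\rVert_{L_T^\infty D(A)}$. Feeding this together with the two estimates above into the Duhamel identity produces the integral inequality
\[
\left\lVert w(t)\right\rVert_{D(A)}\leq C_1\left\lVert x-y\right\rVert_{D(A)}+b\int_0^t(t-\tau)^{\alpha-1}\left\lVert w(\tau)\right\rVert_{D(A)}d\tau,
\]
where $C_1$ is absolute and $b$ is a constant depending only on the quantities permitted in Assumption~\ref{AssumptionA'}. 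Applying Lemma~\ref{Yagislemma} with $a(t)\equiv C_1\left\lVert x-y\right\rVert_{D(A)}$ then gives $\left\lVert w(t)\right\rVert_{D(A)}\leq C\,E_{\alpha,1}\!\left(b\Gamma(\alpha)t^\alpha\right)\left\lVert x-y\right\rVert_{D(A)}$, which is precisely $(\ref{unique mild solution X theorem equation1})$ (with the harmless constant $b$ inside the Mittag-Leffler factor). Taking $x=y$ forces $w\equiv0$, so the strict solution is unique, as claimed. The only genuinely new ingredient beyond the linear estimates is the $D(A)$-mapping bound for $G$; the remaining steps are assembly and the Gronwall lemma.
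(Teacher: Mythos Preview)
Your proposal is correct and follows the same route as the paper: Duhamel representation for strict solutions, the $D(A)$-boundedness of $S_t$, the pointwise convolution bound $(\ref{lemma linear new3 proof equation3})$ for $G$ in $D(A)$, Assumption~\ref{AssumptionA'}, and then Lemma~\ref{Yagislemma}. The paper's argument is terser only because it takes the $D(A)$-upgrades of Proposition~\ref{lemma linear new2} and of $(\ref{lemma linear new3 proof equation3})$ for granted via the commutation $AS_t=S_tA$, $AP_t=P_tA$ on $D(A)$ (both operators being spectral multipliers), whereas you rederive the same bounds by going through the $G^l/G^h$ splitting; the conclusions are identical.
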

\begin{proof}[Proof]
Let $R=\left\lVert u\right\rVert_{L_T^\infty D(A)}\vee\left\lVert v\right\rVert_{L_T^\infty D(A)}$. By the representation of the strict solution that $u(t)-v(t)=S_t(x-y)+iG(F(u)-F(v))$ and Proposition $\ref{lemma linear new2}$ and $(\ref{lemma linear new3 proof equation3})$ it follows that
\[ \left\lVert u(t)-v(t)\right\rVert_{D(A)}\lesssim_{\rho(x),\rho(y),R}\left\lVert x-y\right\rVert_{D(A)}+\int_0^t\left(t-\tau\right)^{\alpha-1}\left\lVert u(\tau)-v(\tau)\right\rVert_{D(A)}d\tau. \]
Then $(\ref{unique mild solution X theorem equation1})$ follows from Lemma $\ref{Yagislemma}$.
\end{proof}
\begin{lemma}
For $T>0$, $GF$ maps a ball with radius $R$ in $L^\infty\left((0,T);D(A)\right)$ into $C\left([0,T];D(A)\right)$ boundedly with the estimate
\[ \left\lVert GF(u)-GF(v)\right\rVert_{L_T^\infty D(A)}\lesssim_{\rho(u(0)),\rho(v(0)),R}T^\alpha\left\lVert u-v\right\rVert_{L_T^\infty D(A)}. \]
\label{unique mild solution X proof lemma1}
\end{lemma}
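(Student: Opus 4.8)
The plan is to reduce everything to the estimate of Proposition~\ref{lemma linear new3} together with the Lipschitz property in Assumption~\ref{AssumptionA'}. The first task is to upgrade Proposition~\ref{lemma linear new3} from $H$ to $D(A)$, i.e.\ to show that $G$ maps $L^\infty\left((0,T);D(A)\right)$ into $C\left([0,T];D(A)\right)$ with
\begin{equation*}
\left\lVert Gv\right\rVert_{L_T^\infty D(A)}\lesssim T^\alpha\left\lVert v\right\rVert_{L_T^\infty D(A)}.
\end{equation*}
The point is that $A$ acts as multiplication by $a(\xi)$ in the spectral representation and $P_t$ acts as a multiplier as well, so $A$ and $G$ commute: for $v(\tau)\in D(A)$ one has $AGv=G(Av)$. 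Applying Proposition~\ref{lemma linear new3} separately to $v$ and to $Av$ then yields both $\left\lVert Gv\right\rVert_{L_T^\infty H}\lesssim T^\alpha\left\lVert v\right\rVert_{L_T^\infty H}$ and $\left\lVert AGv\right\rVert_{L_T^\infty H}=\left\lVert G(Av)\right\rVert_{L_T^\infty H}\lesssim T^\alpha\left\lVert Av\right\rVert_{L_T^\infty H}$, and summing the two gives the $D(A)$ estimate. Continuity in $D(A)$ follows in the same way, since $Gv$ and $AGv=G(Av)$ both lie in $C\left([0,T];H\right)$ by Proposition~\ref{lemma linear new3}.

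Next I would control the nonlinearity. Since $u$ lies in the ball of radius $R$, we have $\left\lVert u(t)\right\rVert_{D(A)}\leq R$ a.e.; combined with $F(0)=0$, Assumption~\ref{AssumptionA'} applied to the pair $u,0$ gives $\left\lVert F(u)(t)\right\rVert_{D(A)}=\left\lVert F(u)(t)-F(0)\right\rVert_{D(A)}\lesssim_{\rho(u(0)),R}\left\lVert u(t)\right\rVert_{D(A)}\leq R$ a.e. Hence $F(u)\in L^\infty\left((0,T);D(A)\right)$, and the $D(A)$ version of $G$ just established shows $GF(u)\in C\left([0,T];D(A)\right)$ with $\left\lVert GF(u)\right\rVert_{L_T^\infty D(A)}\lesssim_{\rho(u(0)),R}T^\alpha R$, which is the boundedness assertion.

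For the Lipschitz estimate I use linearity, $GF(u)-GF(v)=G(F(u)-F(v))$, together with the $D(A)$ bound for $G$:
\begin{equation*}
\left\lVert GF(u)-GF(v)\right\rVert_{L_T^\infty D(A)}\lesssim T^\alpha\left\lVert F(u)-F(v)\right\rVert_{L_T^\infty D(A)}.
\end{equation*}
Because both $u$ and $v$ satisfy $\left\lVert u(t)\right\rVert_{D(A)},\left\lVert v(t)\right\rVert_{D(A)}\leq R$ a.e., Assumption~\ref{AssumptionA'} gives $\left\lVert F(u)(t)-F(v)(t)\right\rVert_{D(A)}\leq C\left\lVert u(t)-v(t)\right\rVert_{D(A)}$ a.e., with $C$ depending only on $\rho(u(0)),\rho(v(0))$ and $R$; taking the essential supremum in $t$ yields $\left\lVert F(u)-F(v)\right\rVert_{L_T^\infty D(A)}\lesssim_{\rho(u(0)),\rho(v(0)),R}\left\lVert u-v\right\rVert_{L_T^\infty D(A)}$. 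Combining the two displayed bounds gives the claimed inequality.

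I expect the only genuine subtlety to be the commutation $AG=GA$ used to transfer Proposition~\ref{lemma linear new3} to $D(A)$; everything else is a direct application of Assumption~\ref{AssumptionA'} and the already-proved estimates. One must also check that the constant $C$ in Assumption~\ref{AssumptionA'} depends only on the initial-data norms and the common bound $R$, and not on $T$, so that it absorbs into the $\lesssim_{\rho(u(0)),\rho(v(0)),R}$ notation and leaves the clean factor $T^\alpha$ untouched.
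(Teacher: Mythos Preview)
Your proposal is correct and follows the same route as the paper: use Assumption~\ref{AssumptionA'} to get the Lipschitz bound $\left\lVert F(u)-F(v)\right\rVert_{L_T^\infty D(A)}\lesssim_{\rho(u(0)),\rho(v(0)),R}\left\lVert u-v\right\rVert_{L_T^\infty D(A)}$, then apply Proposition~\ref{lemma linear new3}. The paper's proof is terser and simply invokes Proposition~\ref{lemma linear new3} without spelling out the passage from $H$ to $D(A)$; your explicit justification via the commutation $AG=GA$ in the spectral representation is exactly the content being taken for granted there.
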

\begin{proof}[Proof]
By hypotheses on $F$ we obtain $F$ maps a ball with radius $R$ in $L^\infty\left((0,T);D(A)\right)$ into $L^\infty\left((0,T);D(A)\right)$ with the estimate
\[ \left\lVert F(u)-F(v)\right\rVert_{L_T^\infty D(A)}\lesssim_{\rho(u(0)),\rho(v(0)),R}\left\lVert u-v\right\rVert_{L_T^\infty D(A)}. \]
Then the result can be followed by Proposition $\ref{lemma linear new3}$.
\end{proof}
\begin{lemma}
For $T>0$, let $x\in D(A)$ such that $\left\lvert x\right\rvert_1<\infty$. If $u\in C\left([0,T];D(A)\right)$ satisfies $u(t)=S_tx+iGF(u)$, then $u$ is a strict solution of $(\ref{nonlinear Schrodinger equation Hilbert})$ on $[0,T]$.
\label{inhomogeneous mild solution strict}
\end{lemma}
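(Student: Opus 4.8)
The plan is to verify directly that the fixed point $u$ of the integral equation $u(t)=S_tx+iGF(u)$ meets all three requirements in the definition of a strict solution (Definition $\ref{linear strict solution H}$): namely $u\in C([0,T];D(A))$, $\mathbf{D}_t^\alpha(u-u(0))\in C([0,T];H)$, and that $u$ solves $(\ref{nonlinear Schrodinger equation Hilbert})$. The membership $u\in C([0,T];D(A))$ is part of the hypothesis, so the work lies in computing the fractional derivative and controlling it up to the endpoints. First I would record the preliminary facts: evaluating the integral equation at $t=0$ gives $u(0)=S_0x+iGF(u)(0)=x$, since $S_0=I$ and the convolution $GF(u)$ vanishes at $t=0$. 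Next, because $u\in C([0,T];D(A))$ its image lies in a ball of $L^\infty((0,T);D(A))$, so Assumption $\ref{AssumptionA'}$ applies and yields the Lipschitz bound $\lVert F(u(t))-F(u(s))\rVert_{D(A)}\le C\lVert u(t)-u(s)\rVert_{D(A)}$; together with the continuity of $u$ this gives $F(u)\in C([0,T];D(A))$, and in particular $F(u)\in L^\infty((0,T);D(A))$. Lemma $\ref{unique mild solution X proof lemma1}$ then guarantees $GF(u)\in C([0,T];D(A))$, so that $GF(u)(t)\in D(A)$ for every $t>0$, which is exactly the hypothesis needed to invoke Proposition $\ref{lemma linear new6}$.

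With these facts in hand, the core step is the computation of $\mathbf{D}_t^\alpha(u-x)$ for $t>0$. Writing $u-x=(S_tx-x)+iGF(u)$ and using the linearity of $\mathbf{D}_t^\alpha=\frac{d}{dt}(g_{1-\alpha}*\,\cdot\,)$, I would apply Proposition $\ref{lemma linear new4}$ to the first summand to get $\mathbf{D}_t^\alpha(S_tx-x)=iAS_tx$, and Proposition $\ref{lemma linear new6}$ to the second to get $\mathbf{D}_t^\alpha(iGF(u))=i(iAGF(u)+F(u))=-AGF(u)+iF(u)$. Adding these and recalling $Au=AS_tx+iAGF(u)$ from the integral equation, the terms reorganize as $\mathbf{D}_t^\alpha(u-x)=iAS_tx-AGF(u)+iF(u)=i(Au+F(u))$, which is precisely $(\ref{nonlinear Schrodinger equation Hilbert})$ rearranged (equivalently $iD_t^\alpha u+Au+F(u)=0$). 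This simultaneously verifies the equation and exhibits $\mathbf{D}_t^\alpha(u-x)$ explicitly as $i(Au+F(u))$ on $(0,T]$.

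Finally I would upgrade this pointwise identity to continuity on the closed interval $[0,T]$. Since $u\in C([0,T];D(A))$, the bound $\lVert Av\rVert_H\le\lVert v\rVert_{D(A)}$ gives $Au\in C([0,T];H)$, and we already have $F(u)\in C([0,T];H)$; hence the right-hand side $i(Au+F(u))$ is continuous on all of $[0,T]$, including $t=0$. This shows $\mathbf{D}_t^\alpha(u-x)$, a priori defined only for $t>0$, extends continuously to $[0,T]$, so $\mathbf{D}_t^\alpha(u-x)\in C([0,T];H)$ and all conditions of the strict solution are met. I expect the main obstacle to be precisely this endpoint behavior at $t=0$: Propositions $\ref{lemma linear new4}$ and $\ref{lemma linear new6}$ produce the derivative only for $t>0$ (as $S_tx\in D(A)$ and $GF(u)(t)\in D(A)$ are asserted there), so the passage $t\downarrow0$ must be justified through the continuity of the closed-form expression $i(Au+F(u))$ rather than by differentiating term by term at the origin; the continuity of $F(u)$ in the $D(A)$-norm, which hinges on the Lipschitz structure of Assumption $\ref{AssumptionA'}$, is the key input that makes this closing argument work.
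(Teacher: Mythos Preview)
Your proof is correct, but it follows a different route from the paper's. The paper argues indirectly: it sets $H(t)=F(u(t))$, uses Assumption~\ref{AssumptionA'} exactly as you do to show $H\in C([0,T];D(A))$, and then simply invokes the linear result Theorem~\ref{linear strict solution theorem} to produce a strict solution $v$ of $iD_t^\alpha v+Av+H(t)=0$ with $v(0)=x$; since $v$ has the same integral representation $v=S_tx+iGH=S_tx+iGF(u)=u$, the conclusion follows. You instead verify Definition~\ref{linear strict solution H} directly, computing $\mathbf{D}_t^\alpha(u-x)$ term by term via Propositions~\ref{lemma linear new4} and~\ref{lemma linear new6} and then using the explicit formula $\mathbf{D}_t^\alpha(u-x)=i(Au+F(u))$ to get continuity at $t=0$. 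Your approach unpacks exactly what Theorem~\ref{linear strict solution theorem} is doing under the hood and makes the endpoint argument explicit; the paper's approach is shorter because it reuses the linear theory as a black box. Both hinge on the same key step, namely that $F(u)\in C([0,T];D(A))$.
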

\begin{proof}[Proof]
Let $H(t)=F(u)$ and $R=\left\lVert u\right\rVert_{L_T^\infty D(A)}$. It's easy to verify that $H(t)\in D(A)$ and for any $t_0\in[0,T]$,
\[ \left\lVert H(t)-H(t_0)\right\rVert_{D(A)}\lesssim_{\rho(x),R}\left\lVert u(t)-u(t_0)\right\rVert_{D(A)}\to0,\quad t\to t_0. \]
It shows that $H\in C\left([0,T];D(A)\right)$. Applying Theorem $\ref{linear strict solution theorem}$ there exsits a unique strict solution $v$ on $[0,T]$ to the following equation
\[ iD_t^\alpha v(t)+Av(t)+H(t)=0,\quad v(0)=x. \]
Then it follows that $v(t)=S_tx+iGH(t)=S_tx+iGF(u)=u(t)$ which completes the proof.
\end{proof}
\begin{proof}[\textbf{Proof of Theorem} $\mathbf{\ref{unique mild solution X}}$]
Let $R=\left\lVert x\right\rVert_{D(A)}$ and set
\[ X_R=\left\{u\in L^\infty\left((0,T);D(A)\right):u(0)=x,\quad\left\lVert u\right\rVert_{L_T^\infty D(A)}\lesssim R\right\} \]
with metric
\[ d(u,v)=\left\lVert u-v\right\rVert_{L_T^\infty D(A)}. \]
Define $Ku=S_tx+iGF(u)$. It follows from Proposition $\ref{lemma linear new2}$ and Lemma $\ref{unique mild solution X proof lemma1}$ that for any $u\in X_R$,
\[ \left\lVert Ku\right\rVert_{L_T^\infty D(A)}\lesssim\left\lVert x\right\rVert_{D(A)}+C_{\rho(x),R}T^\alpha\left\lVert u\right\rVert_{L_T^\infty D(A)}\lesssim R+C_{\rho(x),R}T^\alpha. \]
Then we can choose $T$ small enough such that $K$ maps $X_R$ into $X_R$. For any $u,v\in X_R$, we have, by Lemma $\ref{unique mild solution X proof lemma1}$,
\[ d\left(Ku,Kv\right)=\left\lVert GF(u)-GF(v)\right\rVert_{L_T^\infty D(A)}\lesssim_{\rho(x),R}T^\alpha\left\lVert u-v\right\rVert_{L_T^\infty D(A)}=T^\alpha d(u,v). \]
We can choose $T$ small enough such that $K$ is a contraction on $X_R$ which implies that $K$ has a unique fixed point $u\in X_R$ and hence $u=Ku\in C\left([0,T];D(A)\right)$ by Lemma $\ref{unique mild solution X proof lemma1}$. Combining with Lemma $\ref{inhomogeneous mild solution strict}$ we can obtain the local existence and uniqueness of the strict solution.
\end{proof}
\section{Proof of Theorem $\ref{continuation and blow up alternative}$}
\label{Proof of Theorem continuation and blow up alternative}
\begin{lemma}[continuation]
Let $u$ be the strict solution of $(\ref{nonlinear Schrodinger equation Hilbert})$ on $[0,T]$ under the assumptions in Theorem \ref{unique mild solution X}. Then $u$ can be extended to the interval $[0,T^*]$ for some $T^*>T$ uniquely and the extended function is the strict solution of $(\ref{nonlinear Schrodinger equation Hilbert})$ on $[0,T^*]$.
\label{extended function strict solution}
\end{lemma}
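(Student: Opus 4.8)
The plan is to re-solve the integral equation on a slightly larger interval while freezing the already-constructed solution on $[0,T]$, so that the nonlocal memory of the fractional operator is absorbed into a fixed inhomogeneity. Writing $u(t)=S_tx+iGF(u)(t)$ on $[0,T]$, I would fix a radius $R'=\left\lVert u\right\rVert_{L_T^\infty D(A)}+1$ and, for $T^*>T$ to be chosen, work in the complete metric space
\[
Y=\left\{v\in C\left([0,T^*];D(A)\right):v|_{[0,T]}=u,\ \left\lVert v\right\rVert_{L_{T^*}^\infty D(A)}\leq R'\right\}
\]
with $d(v_1,v_2)=\left\lVert v_1-v_2\right\rVert_{L_{T^*}^\infty D(A)}$ and the map $Kv(t)=S_tx+iGF(v)(t)$.

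First I would observe that $K$ automatically respects the constraint on $[0,T]$: for $t\leq T$ and $v\in Y$ one has $F(v)(\tau)=F(u)(\tau)$ for every $\tau\leq t$, whence $Kv(t)=S_tx+iGF(u)(t)=u(t)$. Thus the only genuine work lives on $[T,T^*]$, where I would split the convolution at $\tau=T$,
\[
Kv(t)=\underbrace{S_tx+i\int_0^TP_{t-\tau}F(u)(\tau)\,d\tau}_{=:\,w(t)}+i\int_T^tP_{t-\tau}F(v)(\tau)\,d\tau .
\]
Here $w$ is determined purely by the history of $u$; since $F(u)\chi_{[0,T]}\in L^\infty\left((0,T^*);D(A)\right)$, Proposition $\ref{lemma linear new2}$ and Proposition $\ref{lemma linear new3}$ give $w\in C\left([T,T^*];D(A)\right)$ with $w(T)=u(T)$.

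Next I would run the contraction. Because $F(v_1)-F(v_2)\equiv0$ on $[0,T]$ for $v_1,v_2\in Y$, the difference $i\,G\!\left(F(v_1)-F(v_2)\right)$ is the image under $G$ of a function vanishing on $[0,T]$, so the estimate $(\ref{lemma linear new3 proof equation4})$ combined with Assumption $\ref{AssumptionA'}$ yields
\[
d(Kv_1,Kv_2)\lesssim_{\rho(x),R'}\left(T^*-T\right)^\alpha d(v_1,v_2).
\]
The same splitting, together with the continuity of $w$ and $\left\lVert w(T)\right\rVert_{D(A)}=\left\lVert u(T)\right\rVert_{D(A)}<R'$, shows that $K$ maps $Y$ into $Y$ once $T^*-T$ is small. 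Choosing $T^*-T$ small enough to make the constant strictly less than $1$, the contraction mapping principle produces a unique fixed point $\tilde u\in Y$ satisfying $\tilde u=S_tx+iGF(\tilde u)$ on $[0,T^*]$ with $\tilde u|_{[0,T]}=u$; by Lemma $\ref{inhomogeneous mild solution strict}$ this $\tilde u$ is a strict solution on $[0,T^*]$. Uniqueness of the extension is then inherited from Theorem $\ref{uniqueness}$: any two strict solutions agreeing at $t=0$ satisfy $(\ref{unique mild solution X theorem equation1})$ with $x=y$, forcing them to coincide.

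The main obstacle is precisely the nonlocality of $D_t^\alpha$: unlike the classical case $\alpha=1$, one cannot simply restart the equation from the datum $u(T)$, because $G$ integrates the entire past. The splitting above is what converts the continuation into a bona fide fixed-point problem on $[T,T^*]$, and the decay factor $\left(T^*-T\right)^\alpha$ coming from $(\ref{lemma linear new3 proof equation4})$ — rather than from any semigroup property — is what closes the contraction. The one delicate point is verifying continuity of the history term $w$ up to the junction $t=T$, where the kernel $P_{t-\tau}$ is still singular as $\tau\uparrow t$; here the continuity estimates of Section $\ref{9.17Linear estimate. The well-posedness of the linear equation}$ (in particular Proposition $\ref{8.16proposition linear new approach bilinear2}$) must be invoked with care.
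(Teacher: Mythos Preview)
Your proposal is correct and follows essentially the same route as the paper: a contraction on functions frozen to equal $u$ on $[0,T]$, with the contraction factor $(T^*-T)^\alpha$ coming from $(\ref{lemma linear new3 proof equation4})$, and the upgrade to a strict solution via Lemma~$\ref{inhomogeneous mild solution strict}$. The only cosmetic difference is in the self-mapping step: the paper bounds $\left\lVert Kv-u(T)\right\rVert_{L_{(T,T^*)}^\infty D(A)}$ directly via the time-difference estimates of Propositions~$\ref{proposition linear new approach bilinear1}$ and~$\ref{8.16proposition linear new approach bilinear2}$, whereas you split the convolution at $\tau=T$ and invoke continuity of the history term $w$ --- which, when unwound, rests on those same estimates.
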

\begin{proof}[Proof]
Due to Lemma $\ref{inhomogeneous mild solution strict}$, we only need to prove $u$ can be extended to $v\in C\left([0,T^*];D(A)\right)$ satisfying $v(t)=S_tx+iGF(v)$ on $[0,T^*]$. Let $Kv=S_tx+iGF(v)$, $R=\left\lVert u\right\rVert_{L_T^\infty D(A)}$ and set
\[ E_R=\left\{v\in L^\infty\left((0,T^*);D(A)\right):\begin{array}{cc}
v\equiv u\;on\;[0,T]\\
\left\lVert v-u(T)\right\rVert_{L_{(T,T^*)}^\infty D(A)}\lesssim R\;on\;[T,T^*]
\end{array}\right\} \]
with the metric
\[ d(v,w)=\left\lVert v-w\right\rVert_{L_{T^*}^\infty D(A)}. \]
We first claim that $K$ maps $E_R$ into itself. Indeed, for any $v\in E_R$, clearly we have, by Proposition $\ref{lemma linear new2}$ and Lemma $\ref{unique mild solution X proof lemma1}$, that $Kv\in L^\infty\left((0,T^*);D(A)\right)$. And $Kv\equiv Ku=u$ on $[0,T]$ follows from that $u$ is a strict solution on $[0,T]$. Now on $[T,T^*]$, according to Proposition $\ref{proposition linear new approach bilinear1}$ and Proposition $\ref{8.16proposition linear new approach bilinear2}$, we can choose $T^*$ and $T$ to be close enough such that
\begin{align*}
&\left\lVert Kv-u(T)\right\rVert_{L_{(T,T^*)}^\infty D(A)}\\
&\lesssim_{\rho(x),R}\left(1+T^{-1}\right)\left(T^*-T\right)\left\lVert x\right\rVert_{D(A)}+\left(\left(T^*-T\right)^\alpha+\left(T^{*\alpha+1}-T^{\alpha+1}\right)\right)\left\lVert v\right\rVert_{L_{T^*}^\infty D(A)}\\
&\lesssim_{\rho(x),R}\left(1+T^{-1}\right)\left(T^*-T\right)R+\left(\left(T^*-T\right)^{\alpha+1}+T\left(T^*-T\right)^\alpha+T^\alpha\left(T^*-T\right)\right)R\\
&\lesssim R.
\end{align*}
Then it follows that $K$ maps $E_R$ into itself. Similarly, for any $v,w\in E_R$, it follows from $(\ref{lemma linear new3 proof equation4})$ that
\begin{align*}
d\left(Kv,Kw\right)&=\left\lVert Kv-Kw\right\rVert_{L_{(T,T^*)}^\infty D(A)}=\left\lVert GF(v)-GF(w)\right\rVert_{L_{(T,T^*)}^\infty D(A)}\\
&\lesssim\left(T^*-T\right)^\alpha\left\lVert F(v)-F(w)\right\rVert_{L_{T^*}^\infty D(A)}\lesssim_{\rho(x),R}\left(T^*-T\right)^\alpha d(v,w).
\end{align*}
Then we can choose $T^*$ and $T$ to be close enough such that $K$ is a contraction on $E_R$ which completes the proof of the result.
\end{proof}
\begin{proof}[\textbf{Proof of Theorem} $\mathbf{\ref{continuation and blow up alternative}}$]
Let
\[ T_{\max}=\sup\left\{T\in(0,\infty):\exists!\;strict\;solution\;to\;(\ref{nonlinear Schrodinger equation Hilbert})\;on\;[0,T]\right\}. \]
Lemma $\ref{extended function strict solution}$ shows the existence of $T_{\max}$ and that $u\in C\left([0,T_{\max});D(A)\right)$. It's clear that $0<T_{\max}\leq\infty$. Suppose that $T_{\max}<\infty$ but $\left\lVert u(t)\right\rVert_{D(A)}\lesssim R$ on $[0,T_{\max}]$. When $t\to T_{\max}$, assuming $t\in\left[T_{\max}-\delta,T_{\max}\right]$, by Proposition $\ref{proposition linear new approach bilinear1}$ and Proposition $\ref{8.16proposition linear new approach bilinear2}$, we have
\begin{align*}
&\left\lVert u(t)-u(T_{\max})\right\rVert_{D(A)}\\
&\lesssim_{\rho(x),R}\left(1+\left(T_{\max}-\delta\right)^{-1}\right)\left(T_{\max}-t\right)\left\lVert x\right\rVert_{D(A)}+\left(\left(T_{\max}-t\right)^\alpha+\left(T_{\max}^{\alpha+1}-t^{\alpha+1}\right)\right)\left\lVert u\right\rVert_{L_{T_{\max}}^\infty D(A)}\\
&\lesssim_{\rho(x),R}\left(1+\left(T_{\max}-\delta\right)^{-1}\right)\left(T_{\max}-t\right)+\left(\left(T_{\max}-t\right)^\alpha+\left(T_{\max}^{\alpha+1}-t^{\alpha+1}\right)\right)\\
&\to0,\quad t\to T_{\max}.
\end{align*}
It follows that $u\in C\left([0,T_{\max}];D(A)\right)$. But by Lemma $\ref{extended function strict solution}$, $u$ can be extended to the interval $[0,T^*]$ for some $T^*>T_{\max}$ which contradicts the definition of $T_{\max}$. Then $T_{\max}<\infty$ implies that $\lim\limits_{t\to T_{\max}}\left\lVert u(t)\right\rVert_{D(A)}=\infty$.
\end{proof}
\section{Proof of Theorem $\ref{unique global solution X}$}
\label{Proof of Theorem unique global solution X}
\begin{lemma}
Let $w:[0,\infty)\to[0,\infty)$ be a continuous, nondecreasing, nonnegative function which is not always $0$ and $u(t)$ be a continuous, nonnegative function on $[0,T]$ satisfying
\[ u(t)\lesssim1+\int_0^t\left(t-\tau\right)^{\alpha-1}w\left(u(\tau)\right)d\tau \]
where $\alpha\in(0,1)$. Then
\[ \int_1^{u(t)}\frac{\sigma^{\frac{1}{\alpha}+\varepsilon-1}}{w(\sigma)^{\frac{1}{\alpha}+\varepsilon}}d\sigma\lesssim_\varepsilon e^{\left(\frac{1}{\alpha}+\varepsilon\right)T}\left(1-e^{-\left(\frac{1}{\alpha}+\varepsilon\right)t}\right). \]
\label{9.9proof of Theorem unique global solution X lemma}
\end{lemma}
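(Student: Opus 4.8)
The plan is to run a Bihari--Osgood argument adapted to the weakly singular kernel $(t-\tau)^{\alpha-1}$. Fix $\varepsilon>0$, set $p=\frac1\alpha+\varepsilon>1$, and let $q=\frac{p}{p-1}$ be its conjugate exponent. First I would absorb the implicit constant from the hypothesis, writing it as
\[ u(t)\le C_0+C_0\int_0^t(t-\tau)^{\alpha-1}w(u(\tau))\,d\tau,\qquad C_0\ge1. \]
The point of departure is that the classical Bihari/Osgood scheme differentiates the right-hand side and needs $\frac{d}{dt}$ of the integral to be essentially $w(u(t))$ times a bounded factor; here that derivative is a \emph{fractional} (not pointwise) derivative, and $u$ need not even be differentiable, so the scheme cannot be applied directly.

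The heart of the argument is therefore to trade the singular kernel for an honest $L^p$ integral by Hölder's inequality with exponents $q$ and $p$:
\[ \int_0^t(t-\tau)^{\alpha-1}w(u(\tau))\,d\tau\le\left(\int_0^t(t-\tau)^{(\alpha-1)q}\,d\tau\right)^{1/q}\left(\int_0^t w(u(\tau))^p\,d\tau\right)^{1/p}. \]
The decisive point is that $p=\frac1\alpha+\varepsilon$ is tuned so that $(\alpha-1)q+1=\frac{\alpha^2\varepsilon}{1-\alpha+\alpha\varepsilon}>0$; this is exactly where the strict positivity of $\varepsilon$ is used, and it is what renders $\int_0^t(t-\tau)^{(\alpha-1)q}\,d\tau=\beta^{-1}t^{\beta}$ finite, with $\beta=(\alpha-1)q+1$. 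Raising to the power $p$, using $(a+b)^p\le 2^{p-1}(a^p+b^p)$, and invoking the book-keeping identity $\beta(p-1)=\alpha\varepsilon$, I arrive at a genuinely non-singular nonlinear inequality
\[ u(t)^p\le A+B\,t^{\alpha\varepsilon}\int_0^t w(u(\tau))^p\,d\tau, \]
with $A=2^{p-1}C_0^p$ and $B$ depending only on $C_0,\alpha,\varepsilon$.

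Now set $z(t)=\int_0^t w(u(\tau))^p\,d\tau$, so $z'(t)=w(u(t))^p$. Since $w$ is nondecreasing and $u(t)\le(A+Bt^{\alpha\varepsilon}z(t))^{1/p}$, I obtain the autonomous-type differential inequality $z'(t)\le m\left(A+Bt^{\alpha\varepsilon}z(t)\right)$ with $m(s)=w(s^{1/p})^p$ nondecreasing. Dividing by $m\left(A+Bt^{\alpha\varepsilon}z(t)\right)$ and integrating in $t$, then substituting $\zeta=A+B\tau^{\alpha\varepsilon}z(\tau)$ and finally $\sigma=\zeta^{1/p}$ is what produces the precise integrand: the Jacobian $p\sigma^{p-1}\,d\sigma$ supplies the weight $\sigma^{p-1}$ and the denominator becomes $w(\sigma)^p$, converting the left-hand side into $\int_{A^{1/p}}^{u(t)}\frac{\sigma^{p-1}}{w(\sigma)^p}\,d\sigma$. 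Splitting $\int_1^{u(t)}=\int_1^{A^{1/p}}+\int_{A^{1/p}}^{u(t)}$ and folding in the constant piece $\int_1^{A^{1/p}}\frac{\sigma^{p-1}}{w(\sigma)^p}\,d\sigma$ (which is $T$-independent) then gives the bound. The stated closed form $e^{pT}(1-e^{-pt})$ is reached by observing that $e^{pT}(1-e^{-pt})=e^{p(T-t)}(e^{pt}-1)\ge pt$ for $0\le t\le T$, so it dominates the polynomial growth generated by the integration; alternatively, one may linearise through the substitution and compare with the Mittag-Leffler bound of Lemma~\ref{Yagislemma}, whose $E_{\alpha,1}$ is controlled by an exponential.

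I expect the main obstacle to be precisely the weak singularity of the kernel, which blocks the direct Bihari differentiation; the Hölder step with the conjugate exponent of $p=\frac1\alpha+\varepsilon$ is the device that removes it, and the only genuinely delicate verification is the borderline integrability $(\alpha-1)q+1>0$, which is barely satisfied because of $\varepsilon>0$ and degenerates exactly at $\varepsilon=0$. The subsequent Bihari change of variables and the elementary comparison of the polynomial estimate with the exponential right-hand side are then routine.
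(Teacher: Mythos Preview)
The paper does not actually give a proof of this lemma: it simply cites Theorem~2 of Medve\v{d}'s paper \emph{Integral Inequalities and Global Solutions of Semilinear Evolution Equations} and omits the argument. Your proposal is precisely the Medve\v{d} strategy---H\"older with exponents $p=\frac{1}{\alpha}+\varepsilon$ and $q=\frac{p}{p-1}$ to desingularise the kernel, followed by a Bihari--Osgood integration---so you are on the same track as the reference the paper invokes, and your integrability check $(\alpha-1)q+1=\frac{\alpha^{2}\varepsilon}{1-\alpha+\alpha\varepsilon}>0$ is exactly the place where $\varepsilon>0$ enters.

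One point needs tightening. Your substitution $\zeta=A+B\tau^{\alpha\varepsilon}z(\tau)$ is not a clean change of variables, since $\zeta$ depends on $\tau$ both through $\tau^{\alpha\varepsilon}$ and through $z(\tau)$, and the Jacobian picks up an extra term $B\alpha\varepsilon\,\tau^{\alpha\varepsilon-1}z(\tau)$ that you have not accounted for. There are two standard repairs. The simplest is to freeze the polynomial factor by $\tau^{\alpha\varepsilon}\le T^{\alpha\varepsilon}$ before integrating, which gives $\int_{A}^{u(t)^{p}}\frac{dr}{m(r)}\lesssim T^{\alpha\varepsilon}t$; your final paragraph then correctly absorbs $T^{\alpha\varepsilon}t$ into $e^{pT}(1-e^{-pt})$ since $\sup_{0<t\le T}\frac{T^{\alpha\varepsilon}t}{e^{pT}(1-e^{-pt})}<\infty$ uniformly in $T$. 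The second repair, which is what Medve\v{d} actually does and which explains the specific exponential shape of the conclusion, is to insert a weight $e^{\tau}$ in the H\"older step: writing $(t-\tau)^{\alpha-1}w(u(\tau))=(t-\tau)^{\alpha-1}e^{\tau}\cdot e^{-\tau}w(u(\tau))$ yields $\bigl(\int_{0}^{t}(t-\tau)^{(\alpha-1)q}e^{q\tau}\,d\tau\bigr)^{1/q}\le C_{\varepsilon}e^{t}$, and then $z(t)=\int_{0}^{t}e^{-p\tau}w(u(\tau))^{p}\,d\tau$ satisfies $z'(t)\le e^{-pt}m(A+Be^{pt}z(t))$, from which integration in $t$ directly produces the factor $\int_{0}^{t}e^{-p\tau}\,d\tau=\frac{1}{p}(1-e^{-pt})$ and the substitution $r=A+Be^{pT}z$ produces the factor $e^{pT}$. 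Either route closes the argument; just make the substitution step honest.
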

\begin{proof}[Proof]
The proof is similar to Theorem 2 in \cite{Integral-Inequalities-and-Global-Solutions-of-Semilinear-Evolution-Equations} and we omit it.
\end{proof}
\begin{proof}[\textbf{Proof of Theorem} $\mathbf{\ref{unique global solution X}}$]
It suffices to prove that $\left\lVert u(t)\right\rVert_{D(A)}$ is bounded on every interval $[0,T]$. We assume that there exists a $T<\infty$ such that $\lim\limits_{t\uparrow T}\left\lVert u(t)\right\rVert_{D(A)}=\infty$. Then for such $T$, by Proposition $\ref{lemma linear new2}$, Assumption $\ref{AssumptionB'}$ and $(\ref{lemma linear new3 proof equation3})$, we obtain
\[ \left\lVert u(t)\right\rVert_{D(A)}\lesssim_{\left\lVert x\right\rVert_{D(A)},\varrho(x)}1+\int_0^t\left(t-\tau\right)^{\alpha-1}w\left(\left\lVert u(\tau)\right\rVert_{D(A)}\right)d\tau. \]
Then Lemma $\ref{9.9proof of Theorem unique global solution X lemma}$ shows that
\[ \int_1^{\left\lVert u(t)\right\rVert_{D(A)}}\frac{\sigma^{\frac{1}{\alpha}+\varepsilon-1}}{w(\sigma)^{\frac{1}{\alpha}+\varepsilon}}d\sigma\lesssim_{\varepsilon,\left\lVert x\right\rVert_{D(A)},\varrho(x)}e^{\left(\frac{1}{\alpha}+\varepsilon\right)T}. \]
Letting $t\uparrow T$ on both sides of the inequality we have
\[ \int_1^\infty\frac{\sigma^{\frac{1}{\alpha}+\varepsilon-1}}{w(\sigma)^{\frac{1}{\alpha}+\varepsilon}}d\sigma\lesssim_{\varepsilon,\left\lVert x\right\rVert_{D(A)},\varrho(x)}e^{\left(\frac{1}{\alpha}+\varepsilon\right)T}. \]
This leads to a contradiction since $w\in C_\infty[0,\infty)$. Then the proof is complete.
\end{proof}
\section{Application. The well-posedness of the fractional dispersive equation}
\label{9.16Application. The well-posedness of the fractional dispersive equation}
In this section, we will consider the well-posedness of $(\ref{9.1 very general dispersive equation motivated})$. The hypotheses on $F$ are:
\begin{assumption}
$F(0)=0$ and $\left\lVert F(u)-F(v)\right\rVert_{H^s(\mathbb{R}^n)}\lesssim_{\rho(u(0)),\rho(v(0)),R}\left\lVert u-v\right\rVert_{H^s(\mathbb{R}^n)}$ a.e. on $I\subset [0,\infty)$, where $R$ is the essential upper bound of $\left\lVert u(t)\right\rVert_{H^s(\mathbb{R}^n)}$ and $\left\lVert v(t)\right\rVert_{H^s(\mathbb{R}^n)}$ on $I$.
\label{AssumptionA}
\end{assumption}

\begin{assumption}
There exists a $w\in C_\infty[0,\infty)$ such that $\left\lVert F(u)\right\rVert_{H^s(\mathbb{R}^n)}\lesssim_{\varrho(u(0))}w\left(\left\lVert u(t)\right\rVert_{H^s(\mathbb{R}^n)}\right)$.
\label{AssumptionB}
\end{assumption}
Here $\rho(\cdot)$ and $\varrho(\cdot)$ are norms.

We will prove the following results.
\begin{theorem}[local well-posedness]
Let $\alpha\in(0,1)$, $m\geq\frac{n}{2}$, $s\geq m$, $q\in L^2(\mathbb{R}^n)$ and $V\in L^\infty(\mathbb{R}^n)$. $P(D)$ is defined as $P(D)u=\mathscr{F}^{-1}\left(P(\xi)\mathscr{F}u\right)$ where the assumption of $P(\xi)$ is $P(\xi)\in C\left(\mathbb{R}^n;\mathbb{R}\right)$ and $\left\lvert P(\xi)\right\rvert\sim\left\lvert\xi\right\rvert^m$ when $\lvert\xi\rvert\to\infty$ and Assumption $\ref{AssumptionA}$ holds. If $u_0\in H^s(\mathbb{R}^n)$ satisfying $\left\lVert u_0\right\rVert_{H^s(\mathbb{R}^n)}\vee\rho(u_0)<\infty$, there exists a positive number $T$ which depends only on $\left\lVert u_0\right\rVert_{H^s(\mathbb{R}^n)}$ and $\rho(u_0)$ such that $(\ref{9.1 very general dispersive equation motivated equivalent})$ admits a unique solution on $[0,T]$ in the class
\[ u\in C\left([0,T];H^s(\mathbb{R}^n)\right),\quad\mathbf{D}_t^\alpha(u-u_0)\in C\left([0,T];L^2(\mathbb{R}^n)\right). \]
In addition, $u$ can be extended to the maximal interval $[0,T_{\max})$ such that 
\[ u\in C\left([0,T_{\max});H^s(\mathbb{R}^n)\right),\quad\mathbf{D}_t^\alpha(u-u_0)\in C\left([0,T_{\max});L^2(\mathbb{R}^n)\right) \]
and $T_{\max}<\infty$ implies $\lim\limits_{t\uparrow T_{max}}\left\lVert u(t)\right\rVert_{H^s(\mathbb{R}^n)}=\infty$.
\label{9.1 very general dispersive equation motivated equivalent result}
\end{theorem}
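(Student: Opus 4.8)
The plan is to recast $(\ref{9.1 very general dispersive equation motivated})$ into the abstract form $iD_t^\alpha u+Au+\widetilde F(u)=0$ treated in Theorem $\ref{unique mild solution X}$ and Theorem $\ref{continuation and blow up alternative}$, and then read off the conclusions. Multiplying the equation by $i$ and using $-i^2=1$ turns it into $iD_t^\alpha u+(P(D)+q+V)u+iF(u)=0$, so I would take $H=L^2(\mathbb{R}^n)$, $A=P(D)+q+V$, and $\widetilde F(u)=iF(u)$. Since $\widetilde F(0)=iF(0)=0$ and $\lVert iF(u)-iF(v)\rVert=\lVert F(u)-F(v)\rVert$, Assumption $\ref{AssumptionA}$ transfers verbatim to Assumption $\ref{AssumptionA'}$ (and $\ref{AssumptionB}$ to $\ref{AssumptionB'}$), with the norms $\rho(\cdot),\varrho(\cdot)$ unchanged, once $D(A)$ is identified with the Sobolev space in which the hypotheses on $F$ are imposed.

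The heart of the argument is to verify that $A=P(D)+q+V$ is a genuine selfadjoint operator on $L^2(\mathbb{R}^n)$ so that the spectral-theorem machinery of Section $\ref{9.17Linear estimate. The well-posedness of the linear equation}$ applies. First, $P(D)$ is a Fourier multiplier with real symbol, hence symmetric, and because $P(\xi)$ is continuous with $\lvert P(\xi)\rvert\sim\lvert\xi\rvert^m$ as $\lvert\xi\rvert\to\infty$, one has $1+\lvert P(\xi)\rvert^2\sim\langle\xi\rangle^{2m}$; consequently its maximal domain $\{u:P(\xi)\widehat u\in L^2\}$ equals $H^m(\mathbb{R}^n)$ with graph norm $\lVert u\rVert_{L^2}+\lVert P(D)u\rVert_{L^2}\sim\lVert u\rVert_{H^m}$, so $P(D)$ is selfadjoint with $D(P(D))=H^m$. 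Next, $V$ is multiplication by a real bounded function, a bounded selfadjoint perturbation. The delicate term is $q$: multiplication by an $L^2$ function is unbounded on $L^2$, but $m>\frac n2$ yields $\lVert u\rVert_{L^\infty}\lesssim\lVert\langle\xi\rangle^{-m}\rVert_{L^2}\lVert u\rVert_{H^m}$, the $L^2$-norm being finite precisely because $2m>n$; hence $\lVert qu\rVert_{L^2}\le\lVert q\rVert_{L^2}\lVert u\rVert_{L^\infty}\lesssim\lVert u\rVert_{H^m}$, and a high/low frequency splitting makes this infinitesimally small relative to $\lVert P(D)u\rVert_{L^2}$. By the Kato--Rellich perturbation theorem recorded in Appendix $\ref{9.17The perturbation and the spectral theorem of the selfadjoint operators}$, $A=P(D)+(q+V)$ is then selfadjoint with the same domain $D(A)=H^m(\mathbb{R}^n)$, and since $q+V$ is lower order the graph norm of $A$ stays equivalent to $\lVert\cdot\rVert_{H^m}$.

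With $A$ identified I would invoke Theorem $\ref{unique mild solution X}$ to obtain a positive time $T$, depending only on $\lVert u_0\rVert_{D(A)}$ and $\rho(u_0)$, and a unique strict solution $u\in C([0,T];D(A))$ with $\mathbf D_t^\alpha(u-u_0)\in C([0,T];L^2)$; then Theorem $\ref{continuation and blow up alternative}$ extends $u$ to a maximal interval $[0,T_{\max})$ and shows $T_{\max}<\infty$ forces $\lVert u(t)\rVert_{D(A)}\to\infty$. Because the $D(A)$-norm is equivalent to the $H^m$-norm, this is exactly the stated existence, uniqueness, and blow-up alternative in the class of the theorem.

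The one point requiring genuine care is the Sobolev index. The natural domain produced by the perturbation argument is $H^m$, since $q\in L^2$ and $V\in L^\infty$ map $H^s$ only into $L^2$, not into $H^{s-m}$, and therefore multiplication by $q$ or $V$ is not selfadjoint on $H^\sigma$ for $\sigma\neq0$; this is why one cannot simply replace the base space $L^2$ by $H^{s-m}$. To reach the full class $C([0,T];H^s)$ for $s>m$ I would verify directly that the linear propagators $S_t$ and $G$ of Section $\ref{9.17Linear estimate. The well-posedness of the linear equation}$, being Fourier multipliers in the spectral variable together with the lower-order potential, preserve $H^s$ and obey the same estimates there, and then re-run the contraction of Theorem $\ref{unique mild solution X}$ in $C([0,T];H^s)$ using Assumption $\ref{AssumptionA}$ posed in $H^s$. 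I expect this regularity matching --- reconciling the order-$m$ selfadjoint operator $A$ with the $H^s$ scale in the presence of the rough potentials $q,V$ --- to be the main obstacle, while the selfadjointness via Kato--Rellich and the transfer of the hypotheses on $F$ are comparatively routine.
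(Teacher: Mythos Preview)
Your approach coincides with the paper's: rewrite the equation as $iD_t^\alpha u+Au+iF(u)=0$ with $A=P(D)+q+V$ on $H=L^2(\mathbb{R}^n)$, verify that $A$ is selfadjoint via the Kato--Rellich theorem (this is the paper's Proposition~\ref{9.1 introductioin selfadjoint motivated}), and then invoke the abstract Theorems~\ref{unique mild solution X} and~\ref{continuation and blow up alternative}. The relative-boundedness estimate you sketch --- $\lVert qu\rVert_{L^2}\le\lVert q\rVert_{L^2}\lVert u\rVert_{L^\infty}$ controlled by $\lVert Hu\rVert_{L^2}$ with small constant --- is exactly what the paper does, and the transfer of Assumption~\ref{AssumptionA} to Assumption~\ref{AssumptionA'} is handled the same way.

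The one divergence is precisely the point you flag. The paper does not run a separate $H^s$-propagator argument for $s>m$; it simply \emph{declares} $D(H)=H^s(\mathbb{R}^n)$ for the chosen $s\ge m$, asserts that $H$ is selfadjoint with that domain, applies Kato--Rellich, and reads off $D(A)=D(H)=H^s$ via a graph-norm comparison. Your observation that the natural selfadjoint domain of the Fourier multiplier $P(D)$ is $H^m$, not $H^s$ for $s>m$, is in fact more careful than the paper's treatment: a symmetric multiplier restricted to $H^s\subsetneq H^m$ has adjoint with the larger domain $H^m$, so the paper's selfadjointness claim is unjustified when $s>m$. Thus for $s=m$ your proposal and the paper's proof are identical, while for $s>m$ you have located a genuine gap that the paper glosses over rather than a difference in strategy; your suggested remedy (checking directly that $S_t,G$ preserve $H^s$) is one reasonable way to close it.
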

\begin{theorem}[global well-posedness]
Let the assumptions in Theorem $\ref{9.1 very general dispersive equation motivated equivalent result}$ and Assumption $\ref{AssumptionB}$ hold. If $u_0\in H^s(\mathbb{R}^n)$ satisfying $\left\lVert u_0\right\rVert_{H^s(\mathbb{R}^n)}\vee\rho(u_0)\vee\varrho(u_0)<\infty$, $(\ref{9.1 very general dispersive equation motivated equivalent})$ admits a unique solution on $[0,\infty)$ in the class
\[ u\in C\left([0,\infty);H^s(\mathbb{R}^n)\right),\quad\mathbf{D}_t^\alpha(u-u_0)\in C\left([0,\infty);L^2(\mathbb{R}^n)\right). \]
\label{9.1 very general dispersive equation motivated equivalent result global}
That is, the solution in Theorem $\ref{9.1 very general dispersive equation motivated equivalent result}$ is global.
\end{theorem}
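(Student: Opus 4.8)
The plan is to deduce this global statement from the abstract global well-posedness result, Theorem \ref{unique global solution X}, in exactly the way the local statement (Theorem \ref{9.1 very general dispersive equation motivated equivalent result}) is deduced from the abstract local theory. First I would recall the identification carried out in the proof of Theorem \ref{9.1 very general dispersive equation motivated equivalent result}: one takes $H=L^2(\mathbb{R}^n)$, sets $A=P(D)+q+V$, and checks via the perturbation results of the appendix that $A$ is selfadjoint on $L^2(\mathbb{R}^n)$ with $D(A)=H^s(\mathbb{R}^n)$ and with graph norm $\left\lVert\cdot\right\rVert_{D(A)}$ equivalent to $\left\lVert\cdot\right\rVert_{H^s(\mathbb{R}^n)}$; here the hypotheses $m\ge\frac{n}{2}$ and $\left\lvert P(\xi)\right\rvert\sim\left\lvert\xi\right\rvert^m$ are what render multiplication by $q\in L^2(\mathbb{R}^n)$ a relatively bounded perturbation and multiplication by $V\in L^\infty(\mathbb{R}^n)$ a bounded one. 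Multiplying $(\ref{9.1 very general dispersive equation motivated})$ by $i$ turns it into the abstract form $iD_t^\alpha u+Au+\tilde F(u)=0$ with $\tilde F(u)=iF(u)$, so that $\left\lVert\tilde F(u)\right\rVert_{D(A)}=\left\lVert F(u)\right\rVert_{D(A)}$ and Assumption \ref{AssumptionA} becomes Assumption \ref{AssumptionA'} for $\tilde F$.

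The only genuinely new ingredient relative to the local theorem is Assumption \ref{AssumptionB}, which I must translate into the abstract Assumption \ref{AssumptionB'} for $\tilde F$. Using the norm equivalence $c\left\lVert\cdot\right\rVert_{H^s}\le\left\lVert\cdot\right\rVert_{D(A)}\le C\left\lVert\cdot\right\rVert_{H^s}$ and the monotonicity of $w$, Assumption \ref{AssumptionB} gives
\[ \left\lVert\tilde F(u)\right\rVert_{D(A)}\le C\left\lVert F(u)\right\rVert_{H^s}\lesssim_{\varrho(u(0))}w\left(\left\lVert u\right\rVert_{H^s}\right)\le w\left(c^{-1}\left\lVert u\right\rVert_{D(A)}\right)=:\tilde w\left(\left\lVert u\right\rVert_{D(A)}\right), \]
with $\tilde w(\sigma)=w(c^{-1}\sigma)$. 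A change of variables $\tau=c^{-1}\sigma$ in the defining integral of $C_\infty[0,\infty)$ shows $\int_1^\infty\sigma^{\frac{1}{\alpha}+\varepsilon-1}\tilde w(\sigma)^{-(\frac{1}{\alpha}+\varepsilon)}\,d\sigma=\infty$ precisely because the same divergence holds for $w$, so $\tilde w\in C_\infty[0,\infty)$ and Assumption \ref{AssumptionB'} holds for $\tilde F$.

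With Assumption \ref{AssumptionA'} already verified in the local theorem and Assumption \ref{AssumptionB'} now in hand, I would apply Theorem \ref{unique global solution X} to the datum $u_0$, whose finiteness condition $\left\lVert u_0\right\rVert_{H^s}\vee\rho(u_0)\vee\varrho(u_0)<\infty$ is exactly $\left\lvert u_0\right\rvert_2<\infty$ under the norm identifications. This yields a unique strict solution on $[0,\infty)$ lying in $C\left([0,\infty);D(A)\right)=C\left([0,\infty);H^s(\mathbb{R}^n)\right)$ with $\mathbf{D}_t^\alpha(u-u_0)\in C\left([0,\infty);L^2(\mathbb{R}^n)\right)$, which is the assertion.

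I expect the substantive difficulty to sit entirely in the background facts imported from the local theorem — namely the selfadjointness of $A=P(D)+q+V$ on $L^2(\mathbb{R}^n)$ and the identification $D(A)=H^s(\mathbb{R}^n)$ with equivalent norms — rather than in the globalization step itself, which is essentially bookkeeping once the nonlinearity is matched. If one preferred a self-contained argument bypassing the abstract theorem, the same conclusion follows by re-running the contradiction of the proof of Theorem \ref{unique global solution X} directly in $H^s$: assuming $T_{\max}<\infty$, the blow-up alternative of Theorem \ref{9.1 very general dispersive equation motivated equivalent result} forces $\left\lVert u(t)\right\rVert_{H^s}\to\infty$, while Proposition \ref{lemma linear new2}, the bound $(\ref{lemma linear new3 proof equation3})$ read through $D(A)=H^s$, and Assumption \ref{AssumptionB} give
\[ \left\lVert u(t)\right\rVert_{H^s}\lesssim 1+\int_0^t(t-\tau)^{\alpha-1}w\left(\left\lVert u(\tau)\right\rVert_{H^s}\right)\,d\tau; \]
Lemma \ref{9.9proof of Theorem unique global solution X lemma} then bounds $\int_1^{\left\lVert u(t)\right\rVert_{H^s}}\sigma^{\frac{1}{\alpha}+\varepsilon-1}w(\sigma)^{-(\frac{1}{\alpha}+\varepsilon)}\,d\sigma$ uniformly up to $T_{\max}$, and letting $t\uparrow T_{\max}$ forces the divergent integral defining $C_\infty[0,\infty)$ to be finite, a contradiction.
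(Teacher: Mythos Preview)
Your proposal is correct and follows exactly the paper's approach: the paper's entire argument for Theorem \ref{9.1 very general dispersive equation motivated equivalent result global} is the single sentence ``By the above arguments and applying Theorem $\ref{unique mild solution X}$, Theorem $\ref{continuation and blow up alternative}$ and Theorem $\ref{unique global solution X}$ we can obtain Theorem $\ref{9.1 very general dispersive equation motivated equivalent result}$ and Theorem $\ref{9.1 very general dispersive equation motivated equivalent result global}$,'' invoking Proposition \ref{9.1 introductioin selfadjoint motivated} for the selfadjointness and domain identification. You are in fact more careful than the paper, since you explicitly check that the rescaled $\tilde w(\sigma)=w(c^{-1}\sigma)$ remains in $C_\infty[0,\infty)$ under the norm equivalence, a point the paper leaves implicit.
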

To deal with these theorems, it suffices to consider the following equivalent equation that
\begin{equation}
\begin{cases}
iD_t^\alpha u+P(D)u+qu+Vu+F(u)=0,\quad&x\in\mathbb{R}^n,\;t>0\\
u(0,x)=u_0(x),\quad&x\in\mathbb{R}^n
\end{cases}
\label{9.1 very general dispersive equation motivated equivalent}
\end{equation}
where the assumptions of $P(D), q, V, F$ are the same as Theorem $\ref{9.1 very general dispersive equation motivated equivalent result}$ and Theorem $\ref{9.1 very general dispersive equation motivated equivalent result global}$.

Define the following operators:
\begin{align*}
&Hu=\mathscr{F}^{-1}\left(P(\xi)\mathscr{F}u\right),\quad D(H)=H^s(\mathbb{R}^n),\;s\geq m,\\
&Q_1u=qu,\quad D(Q_1)=\left\{u\in L^2(\mathbb{R}^n):qu\in L^2(\mathbb{R}^n)\right\},\\
&Q_2u=Vu,\quad D(Q_2)=\left\{u\in L^2(\mathbb{R}^n):Vu\in L^2(\mathbb{R}^n)\right\},
\end{align*}
and $Tu=Q_1u+Q_2u$, $Au=Hu+Tu$. $Q_1,Q_2$ is called the the maximal multiplication operators by $q,V$ respectively and hence they are selfadjoint operator in $L^2(\mathbb{R}^n)$ (see \cite{Perturbation-Theory-for-Linear-Operators}). We claim that $D(H)\subset\left\{u\in L^2(\mathbb{R}^n):P(D)u\in L^2(\mathbb{R}^n)\right\}$. Indeed, by the assumption $s\geq m$, we obtain $\frac{P(\xi)}{\left(1+\left\lvert\xi\right\rvert^2\right)^{\frac{s}{2}}}\in L^\infty(\mathbb{R}^n)$ and hence
\[ \left\lVert P(D)u\right\rVert_{L^2(\mathbb{R}^n)}\sim\left\lVert\frac{P(\xi)}{\left(1+\left\lvert\xi\right\rvert^2\right)^{\frac{s}{2}}}\left(1+\left\lvert\xi\right\rvert^2\right)^{\frac{s}{2}}\mathscr{F}u\right\rVert_{L^2(\mathbb{R}^n)}<\infty. \]
Also it's easy to see that $H$ is a selfadjoint operator in $L^2(\mathbb{R}^n)$.
\begin{proposition}
$A$ is a selfadjoint operator in $L^2(\mathbb{R}^n)$ with $D(A)=D(H)$.
\label{9.1 introductioin selfadjoint motivated}
\end{proposition}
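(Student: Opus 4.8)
The plan is to realize $A=H+T$ as a relatively bounded perturbation of the selfadjoint operator $H$ and to invoke the Kato--Rellich theorem (the perturbation theory for selfadjoint operators recalled in Appendix~\ref{9.17The perturbation and the spectral theorem of the selfadjoint operators}, see also \cite{Perturbation-Theory-for-Linear-Operators}). Since $P(\xi)$ is real, $H$ is selfadjoint, as already noted, and since $q,V$ are real the multiplication operators $Q_1,Q_2$ are symmetric; hence $T=Q_1+Q_2$ is symmetric on $D(T)=D(Q_1)\cap D(Q_2)=D(Q_1)$, where we used $D(Q_2)=L^2(\mathbb{R}^n)$ because $V\in L^\infty(\mathbb{R}^n)$. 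It therefore suffices to check that $D(H)\subset D(T)$ and that $T$ is $H$-bounded with relative bound $<1$; in fact I would obtain relative bound $0$, which yields selfadjointness of $A$ on the unchanged domain $D(A)=D(H)=H^s(\mathbb{R}^n)$.

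The contribution of $Q_2$ is harmless, since $\lVert Q_2u\rVert_{L^2}\leq\lVert V\rVert_{L^\infty}\lVert u\rVert_{L^2}$ feeds only into the $\lVert u\rVert_{L^2}$ term. The heart of the matter is the estimate for $Q_1$. Starting from $\lVert Q_1u\rVert_{L^2}\leq\lVert q\rVert_{L^2}\lVert u\rVert_{L^\infty}$ and $\lVert u\rVert_{L^\infty}\lesssim\lVert\mathscr{F}u\rVert_{L^1}$, I would split the frequency integral at a parameter $R>0$: on $\{\lvert\xi\rvert\leq R\}$ the Cauchy--Schwarz inequality gives a bound $\lesssim R^{\frac{n}{2}}\lVert u\rVert_{L^2}$, while on $\{\lvert\xi\rvert>R\}$, writing $\lvert\mathscr{F}u\rvert=\lvert\xi\rvert^{-m}\lvert\xi\rvert^{m}\lvert\mathscr{F}u\rvert$ and using $\int_{\lvert\xi\rvert>R}\lvert\xi\rvert^{-2m}d\xi\lesssim R^{n-2m}$ (finite precisely because $m>\tfrac{n}{2}$) gives a bound $\lesssim R^{\frac{n}{2}-m}\lVert\,\lvert\xi\rvert^{m}\mathscr{F}u\rVert_{L^2}$. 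Since $\lvert\xi\rvert^{m}\lesssim 1+\lvert P(\xi)\rvert$ by the hypothesis $\lvert P(\xi)\rvert\sim\lvert\xi\rvert^m$ for large $\lvert\xi\rvert$ and by continuity for small $\lvert\xi\rvert$, one has $\lVert\,\lvert\xi\rvert^{m}\mathscr{F}u\rVert_{L^2}\lesssim\lVert u\rVert_{L^2}+\lVert Hu\rVert_{L^2}$ for all $u\in D(H)$. Altogether
\[ \lVert Q_1u\rVert_{L^2}\lesssim\lVert q\rVert_{L^2}\Big(R^{\frac{n}{2}}\lVert u\rVert_{L^2}+R^{\frac{n}{2}-m}\big(\lVert u\rVert_{L^2}+\lVert Hu\rVert_{L^2}\big)\Big), \]
and letting $R\to\infty$ the coefficient of $\lVert Hu\rVert_{L^2}$ tends to $0$. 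Thus for every $a>0$ there is $b=b(a)$ with $\lVert Tu\rVert_{L^2}\leq a\lVert Hu\rVert_{L^2}+b\lVert u\rVert_{L^2}$ for all $u\in D(H)$; in particular $D(H)\subset D(Q_1)=D(T)$ and the relative bound is $0$.

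With relative bound $<1$ in hand, the Kato--Rellich theorem immediately gives that $A=H+T$ is selfadjoint with $D(A)=D(H)$, which is the assertion. The main obstacle is precisely this relative-bound-zero estimate for $Q_1$: converting the Sobolev-type control $\lVert u\rVert_{L^\infty}\lesssim\lVert\mathscr{F}u\rVert_{L^1}$ into a bound carrying an arbitrarily small coefficient on $\lVert Hu\rVert_{L^2}$, which hinges on the low/high frequency decomposition and on the integrability $\int_{\lvert\xi\rvert>R}\lvert\xi\rvert^{-2m}d\xi<\infty$. This is the one place where the strict inequality $m>\tfrac{n}{2}$ is genuinely used and where the growth assumption $\lvert P(\xi)\rvert\sim\lvert\xi\rvert^m$ enters; note also that the relevant comparison $\lVert\,\lvert\xi\rvert^{m}\mathscr{F}u\rVert_{L^2}\lesssim\lVert u\rVert_{L^2}+\lVert Hu\rVert_{L^2}$ holds for every $u\in D(H)$ regardless of the precise value of $s\geq m$, so the argument does not require $s=m$.
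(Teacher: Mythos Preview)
Your proposal is correct and follows essentially the same route as the paper: both verify the Kato--Rellich hypothesis by controlling $\lVert u\rVert_{L^\infty}\lesssim\lVert\mathscr{F}u\rVert_{L^1}$ via a weighted Cauchy--Schwarz in frequency space, with the weight chosen so that the coefficient in front of $\lVert Hu\rVert_{L^2}$ tends to $0$ thanks to $m>\tfrac{n}{2}$. The only cosmetic differences are that the paper uses the global weight $(P(\xi)^2+\gamma^2)^{-1/2}$ with parameter $\gamma\to\infty$ rather than your sharp cutoff at $\lvert\xi\rvert=R$ with weight $\lvert\xi\rvert^{-m}$, and that the paper re-derives the domain equality $D(A)=D(H)$ by hand from the relative-bound inequality instead of reading it off the full Kato--Rellich statement.
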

\begin{proof}[Proof]
We choose $\gamma$ large enough and then the asymptotic behavior of $P(\xi)$ shows that
\begin{align*}
\int_{\mathbb{R}^n}\frac{1}{P(\xi)^2+\gamma^2}d\xi&=\int_{\lvert\xi\rvert\leq\gamma^{\frac{1}{m}}}\frac{1}{P(\xi)^2+\gamma^2}d\xi+\int_{\lvert\xi\rvert>\gamma^{\frac{1}{m}}}\frac{1}{P(\xi)^2+\gamma^2}d\xi\\
&\lesssim\gamma^{\frac{n}{m}-2}+\int_{\lvert\xi\rvert>\gamma^{\frac{1}{m}}}\frac{1}{\left\lvert\xi\right\rvert^{2m}+\gamma^2}d\xi\\
&\sim\gamma^{\frac{n}{m}-2}.
\end{align*}
It follows that
\begin{align*}
\left(\int_{\mathbb{R}^n}\left\lvert\mathscr{F}u\right\rvert d\xi\right)^2&=\left(\int_{\mathbb{R}^n}\frac{1}{P(\xi)+\gamma}(P(\xi)+\gamma)\left\lvert\mathscr{F}u\right\rvert d\xi\right)^2\\
&\leq\int_{\mathbb{R}^n}\frac{1}{\left(P(\xi)+\gamma\right)^2}d\xi\int_{\mathbb{R}^n}\left(P(\xi)+\gamma\right)^2\left\lvert\mathscr{F}u\right\rvert^2d\xi\\
&\lesssim\int_{\mathbb{R}^n}\frac{1}{P(\xi)^2+\gamma^2}d\xi\left(\left\lVert P(D)u\right\rVert_{L^2(\mathbb{R}^n)}^2+\gamma^2\left\lVert u\right\rVert_{L^2(\mathbb{R}^n)}^2\right)\\
&\lesssim\gamma^{\frac{n}{m}-2}\left\lVert P(D)u\right\rVert_{L^2(\mathbb{R}^n)}^2+\gamma^{\frac{n}{m}}\left\lVert u\right\rVert_{L^2(\mathbb{R}^n)}^2\\
&=\gamma^{\frac{n}{m}-2}\left\lVert Hu\right\rVert_{L^2(\mathbb{R}^n)}^2+\gamma^{\frac{n}{m}}\left\lVert u\right\rVert_{L^2(\mathbb{R}^n)}^2
\end{align*}
and hence
\[ \left\lVert u\right\rVert_{L^\infty(\mathbb{R}^n)}\lesssim\left\lVert\mathscr{F}u\right\rVert_{L^1(\mathbb{R}^n)}\lesssim\gamma^{\frac{n}{2m}-1}\left\lVert Hu\right\rVert_{L^2(\mathbb{R}^n)}+\gamma^{\frac{n}{2m}}\left\lVert u\right\rVert_{L^2(\mathbb{R}^n)}. \]
Since
\begin{align*}
&\begin{aligned}
\left\lVert Q_1u\right\rVert_{L^2(\mathbb{R}^n)}&=\left\lVert qu\right\rVert_{L^2(\mathbb{R}^n)}\leq\left\lVert q\right\rVert_{L^2(\mathbb{R}^n)}\left\lVert u\right\rVert_{L^\infty(\mathbb{R}^n)}\\
&\lesssim\gamma^{\frac{n}{2m}-1}\left\lVert q\right\rVert_{L^2(\mathbb{R}^n)}\left\lVert Hu\right\rVert_{L^2(\mathbb{R}^n)}+\gamma^{\frac{n}{2m}}\left\lVert q\right\rVert_{L^2(\mathbb{R}^n)}\left\lVert u\right\rVert_{L^2(\mathbb{R}^n)},
\end{aligned}\\
&\left\lVert Q_2u\right\rVert_{L^2(\mathbb{R}^n)}=\left\lVert Vu\right\rVert_{L^2(\mathbb{R}^n)}\leq\left\lVert V\right\rVert_{L^\infty(\mathbb{R}^n)}\left\lVert u\right\rVert_{L^2(\mathbb{R}^n)},
\end{align*}
we have
\begin{equation}
\left\lVert Tu\right\rVert_{L^2(\mathbb{R}^n)}\lesssim\left(\gamma^{\frac{n}{2m}}\left\lVert q\right\rVert_{L^2(\mathbb{R}^n)}+\left\lVert V\right\rVert_{L^\infty(\mathbb{R}^n)}\right)\left\lVert u\right\rVert_{L^2(\mathbb{R}^n)}+\gamma^{\frac{n}{2m}-1}\left\lVert q\right\rVert_{L^2(\mathbb{R}^n)}\left\lVert Hu\right\rVert_{L^2(\mathbb{R}^n)}.
\label{9.1 introductioin selfadjoint motivated proof equation1}
\end{equation}
Thus we can choose $\gamma$ large enough such that $T$ is $H$-bounded with $H$-bound smaller than $1$ and then $A$ is a selfadjoint operator in $L^2(\mathbb{R}^n)$ by Theorem $\ref{9.1 perturbation selfadjoint operator}$.

Now it remains to prove $D(A)=D(H)$. $(\ref{9.1 introductioin selfadjoint motivated proof equation1})$ shows
\begin{align*}
\left\lVert Au\right\rVert_{L^2(\mathbb{R}^n)}&\leq\left\lVert Hu\right\rVert_{L^2(\mathbb{R}^n)}+\left\lVert Tu\right\rVert_{L^2(\mathbb{R}^n)}\\
&\lesssim\left(\gamma^{\frac{n}{2m}}\left\lVert q\right\rVert_{L^2(\mathbb{R}^n)}+\left\lVert V\right\rVert_{L^\infty(\mathbb{R}^n)}\right)\left\lVert u\right\rVert_{L^2(\mathbb{R}^n)}+\left(1+\gamma^{\frac{n}{2m}-1}\left\lVert q\right\rVert_{L^2(\mathbb{R}^n)}\right)\left\lVert Hu\right\rVert_{L^2(\mathbb{R}^n)}
\end{align*}
and hence $D(H)\subset D(A)$. On the other hand, since
\begin{align*}
&\left\lVert Hu\right\rVert_{L^2(\mathbb{R}^n)}\\
&\leq\left\lVert Au\right\rVert_{L^2(\mathbb{R}^n)}+\left\lVert Tu\right\rVert_{L^2(\mathbb{R}^n)}\\
&\leq\left\lVert Au\right\rVert_{L^2(\mathbb{R}^n)}+C\left(\gamma^{\frac{n}{2m}}\left\lVert q\right\rVert_{L^2(\mathbb{R}^n)}+\left\lVert V\right\rVert_{L^\infty(\mathbb{R}^n)}\right)\left\lVert u\right\rVert_{L^2(\mathbb{R}^n)}+C\gamma^{\frac{n}{2m}-1}\left\lVert q\right\rVert_{L^2(\mathbb{R}^n)}\left\lVert Hu\right\rVert_{L^2(\mathbb{R}^n)},
\end{align*}
we can choose $\gamma$ large enough such that $C\gamma^{\frac{n}{2m}-1}\left\lVert q\right\rVert_{L^2(\mathbb{R}^n)}<1$ and then
\begin{align*}
&\left\lVert Hu\right\rVert_{L^2(\mathbb{R}^n)}\\
&\leq\left(1-C\gamma^{\frac{n}{2m}-1}\left\lVert q\right\rVert_{L^2(\mathbb{R}^n)}\right)^{-1}\left\lVert Au\right\rVert_{L^2(\mathbb{R}^n)}\\
&+C\left(\gamma^{\frac{n}{2m}}\left\lVert q\right\rVert_{L^2(\mathbb{R}^n)}+\left\lVert V\right\rVert_{L^\infty(\mathbb{R}^n)}\right)\left(1-C\gamma^{\frac{n}{2m}-1}\left\lVert q\right\rVert_{L^2(\mathbb{R}^n)}\right)^{-1}\left\lVert u\right\rVert_{L^2(\mathbb{R}^n)}
\end{align*}
which implies that $D(A)\subset D(H)$. Then the proof is complete.
\end{proof}
By the above arguments and applying Theorem $\ref{unique mild solution X}$, Theorem $\ref{continuation and blow up alternative}$ and Theorem $\ref{unique global solution X}$ we can obtain Theorem $\ref{9.1 very general dispersive equation motivated equivalent result}$ and Theorem $\ref{9.1 very general dispersive equation motivated equivalent result global}$.
\begin{appendices}
\section{On the fractional integral and fractional derivatives}
\label{9.1Fractionl integral and derivative definition appendix}
Here we give a brief introduction to the fractional integral and fractional derivatives. We only consider $0<\alpha<1$ without particular comment. Let
\[ g_\alpha(t):=\begin{cases}
\frac{t^{\alpha-1}}{\Gamma(\alpha)},\quad&t>0\\
0,\quad&t\leq0
\end{cases}.\]
We can now define the Rimann-Liouville fractional integral ($I_t^\alpha$), the Riemann-Liouville fractional derivarive ($\mathbf{D}_t^\alpha$) and the Caputo derivative ($D_t^\alpha$) by
\[ I_t^\alpha u(t)=g_\alpha*u,\quad\mathbf{D}_t^\alpha u(t)=\frac{d}{dt}\left(g_{1-\alpha}*u\right),\quad D_t^\alpha u(t)=g_{1-\alpha}*u'(t), \]
and the relationship between the Riemann-Liouville derivative and the Caputo derivative is given by
\[ D_t^\alpha u(t)=\mathbf{D}_t^\alpha\left(u(t)-u(0)\right). \]
\section{On the Mittag-Leffler functions}
\label{9.17On the Mittag-Leffler functions}
Here we give a brief introduction to the Mittag-Leffler functions which are the fundamental functions in fractional differential equations.
\begin{definition}[\cite{Theory-and-Applications-of-Fractional-Differential-Equations-Chapter1-Preliminaries}]
Let $\alpha,\beta,z\in\mathbb{C}$ and $\R\alpha>0$, we define the Mittag-Leffler function by
\[ E_{\alpha,\beta}(z):=\sum\limits_{k=0}^\infty\frac{z^k}{\Gamma(\alpha k+\beta)}. \]
\end{definition}
We state the asymptotic expansion and the derivative of the Mittag-Leffler function as follows.
\begin{theorem}[\cite{Fractional-differential-equations-an-introduction-to-fractional-derivatives-fractional-differential-equations-to-methods-of-their-solution-and-some-of-their-applications}]
If $0<\alpha<2$, $\beta$ is an arbitrary complex number and $\mu$ is an arbitrary real number such that
\[ \frac{\pi\alpha}{2}<\mu<\pi\wedge\pi\alpha, \]
then for an arbitrary interger $p\geq1$ the following expansion holds
\[ E_{\alpha,\beta}(z)=-\sum\limits_{k=1}^p\frac{z^{-k}}{\Gamma(\beta-\alpha k)}+O\left(\left\lvert z\right\rvert^{-1-p}\right),\quad\lvert z\rvert\to\infty,\;\mu\leq\lvert\arg z\rvert\leq\pi. \]
\label{Mittag-Leffler function asymptotic expansion}
\end{theorem}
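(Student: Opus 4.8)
The statement is the classical Mittag--Leffler asymptotics, and the plan is to derive it from a Hankel-type contour representation of $E_{\alpha,\beta}$ followed by an expansion of the Cauchy kernel in powers of $z^{-1}$. First I would recall the Hankel representation of the reciprocal Gamma function,
\[ \frac{1}{\Gamma(s)}=\frac{1}{2\pi i}\int_{Ha}e^{t}t^{-s}\,dt, \]
where $Ha$ runs from $-\infty$ below the negative real axis, encircles the origin, and returns to $-\infty$ above it. Substituting this into the defining series term by term and summing the geometric series in $z\,t^{-\alpha}$ gives $E_{\alpha,\beta}(z)=\frac{1}{2\pi i}\int_{Ha}\frac{e^{t}t^{\alpha-\beta}}{t^{\alpha}-z}\,dt$; the change of variables $\zeta=t^{\alpha}$ then produces
\[ E_{\alpha,\beta}(z)=\frac{1}{2\pi i\alpha}\int_{\gamma_{\theta_0}}\frac{e^{\zeta^{1/\alpha}}\,\zeta^{(1-\beta)/\alpha}}{\zeta-z}\,d\zeta, \]
where $\gamma_{\theta_0}$ consists of the rays $\arg\zeta=\pm\theta_{0}$ ($|\zeta|\ge\varepsilon$) together with the arc $|\zeta|=\varepsilon$, $|\arg\zeta|\le\theta_{0}$, oriented counterclockwise. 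I would first justify this identity for $|z|<\varepsilon$ by the convergent expansion $\tfrac{1}{\zeta-z}=\sum_{k\ge0}z^{k}\zeta^{-k-1}$, matching the series for $E_{\alpha,\beta}$ coefficient by coefficient through the same Hankel formula, and then propagate it to large $z$ in the target sector by contour deformation.

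Here the hypotheses do the essential bookkeeping. I would fix the opening angle $\theta_{0}$ strictly between $\tfrac{\pi\alpha}{2}$ and $\mu$, which is possible exactly because $\tfrac{\pi\alpha}{2}<\mu<\pi\wedge\pi\alpha$. The lower bound $\theta_{0}>\tfrac{\pi\alpha}{2}$ forces $\theta_{0}/\alpha>\tfrac{\pi}{2}$, so on the rays $\R\!\left(\zeta^{1/\alpha}\right)=|\zeta|^{1/\alpha}\cos(\theta_{0}/\alpha)<0$ and $e^{\zeta^{1/\alpha}}$ decays exponentially, which guarantees convergence of every integral below; the upper bound keeps $\zeta^{1/\alpha}$ on its principal branch and makes the sector $\mu\le|\arg z|\le\pi$ nonempty. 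Since $\theta_{0}<\mu\le|\arg z|$, the point $\zeta=z$ is not encircled by $\gamma_{\theta_0}$, so no residue is collected; this is precisely why the exponentially large term $\tfrac{1}{\alpha}z^{(1-\beta)/\alpha}e^{z^{1/\alpha}}$ that appears in the complementary sector $|\arg z|\le\theta_{0}$ is absent here.

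Next I would insert the finite expansion
\[ \frac{1}{\zeta-z}=-\sum_{k=1}^{p}\frac{\zeta^{k-1}}{z^{k}}-\frac{\zeta^{p}}{z^{p}(\zeta-z)} \]
into the representation. For the $k$-th principal term, undoing the substitution $\zeta=t^{\alpha}$ turns $\tfrac{1}{2\pi i\alpha}\int_{\gamma_{\theta_0}}e^{\zeta^{1/\alpha}}\zeta^{(1-\beta)/\alpha+k-1}\,d\zeta$ back into $\tfrac{1}{2\pi i}\int_{Ha}e^{t}t^{-(\beta-\alpha k)}\,dt=\tfrac{1}{\Gamma(\beta-\alpha k)}$ (the factor $\alpha$ cancels), so the principal terms assemble into exactly $-\sum_{k=1}^{p}z^{-k}/\Gamma(\beta-\alpha k)$. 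It then remains to control the remainder
\[ R_{p}(z)=-\frac{1}{2\pi i\,\alpha\,z^{p}}\int_{\gamma_{\theta_0}}\frac{e^{\zeta^{1/\alpha}}\,\zeta^{(1-\beta)/\alpha+p}}{\zeta-z}\,d\zeta. \]

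The main obstacle is the uniform bound on $R_{p}$ across the whole sector $\mu\le|\arg z|\le\pi$, and the key is the angular separation bought by $\theta_{0}<\mu$: for $\zeta\in\gamma_{\theta_0}$ and $z$ in the sector the angle between them is at least $\mu-\theta_{0}>0$, so that $|\zeta-z|\gtrsim|\zeta|+|z|$ with an implied constant depending only on $\mu-\theta_{0}$. Feeding this lower bound together with the exponential decay of $e^{\zeta^{1/\alpha}}$ on the rays into the remainder shows that $\int_{\gamma_{\theta_0}}\bigl|e^{\zeta^{1/\alpha}}\bigr|\,|\zeta|^{\R((1-\beta)/\alpha)+p}\,|d\zeta|$ is a finite constant independent of $z$, whence $|R_{p}(z)|\lesssim|z|^{-p-1}$ uniformly, which is the asserted $O(|z|^{-1-p})$. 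The only points demanding care are that this separation estimate persists up to the boundary $|\arg z|=\mu$ and that the term-by-term interchange and the contour deformation from the small-$z$ regime are licensed by the exponential decay; both are routine once $\theta_{0}$ is fixed strictly inside $\bigl(\tfrac{\pi\alpha}{2},\mu\bigr)$.
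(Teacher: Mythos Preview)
The paper does not supply its own proof of this theorem: it is stated in the appendix as a known result and attributed to the cited reference (Podlubny's monograph), so there is no in-paper argument to compare against. Your sketch is the standard derivation found in that reference and in Erd\'elyi et al.: the Hankel integral representation of $E_{\alpha,\beta}$, the choice of contour opening $\theta_0\in(\tfrac{\pi\alpha}{2},\mu)$ so that the pole $\zeta=z$ lies outside $\gamma_{\theta_0}$ in the sector $\mu\le|\arg z|\le\pi$, the finite geometric expansion of $(\zeta-z)^{-1}$, and the angular-separation bound $|\zeta-z|\gtrsim|\zeta|+|z|$ for the remainder. The argument is correct as outlined; the only places that deserve a line more of detail in a finished write-up are the analytic continuation from $|z|<\varepsilon$ to the full sector (you mention it but do not carry it out) and the verification that the arc contribution near $|\zeta|=\varepsilon$ is harmless, both of which are routine.
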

\begin{theorem}
Let $0<\alpha<1$ and $\lambda\in\mathbb{C}$, then
\begin{gather*}
\frac{d}{dt}E_{\alpha,1}\left(\lambda t^\alpha\right)=\lambda t^{\alpha-1}E_{\alpha,\alpha}\left(\lambda t^\alpha\right),\\
\frac{d}{dt}\left(t^{\alpha-1}E_{\alpha,\alpha}\left(\lambda t^\alpha\right)\right)=t^{\alpha-2}E_{\alpha,\alpha-1}\left(\lambda t^\alpha\right).
\end{gather*}
\label{Derivative Mittag Leffler}
\end{theorem}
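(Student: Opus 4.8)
The plan is to argue directly from the series definition $E_{\alpha,\beta}(z)=\sum_{k=0}^\infty z^k/\Gamma(\alpha k+\beta)$: substitute $z=\lambda t^\alpha$, differentiate the resulting power series in $t$ term by term for $t>0$, and then collapse the coefficients using the reciprocal-gamma identity $z/\Gamma(z+1)=1/\Gamma(z)$ (valid for every $z\in\mathbb{C}$ because $1/\Gamma$ is entire) followed by a shift of the summation index. Both claimed identities then reduce to matching two explicit series.

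For the first identity, substituting $z=\lambda t^\alpha$ gives
\[ E_{\alpha,1}(\lambda t^\alpha)=\sum_{k=0}^\infty\frac{\lambda^k t^{\alpha k}}{\Gamma(\alpha k+1)}. \]
Differentiating termwise and dropping the vanishing $k=0$ term yields $\sum_{k=1}^\infty \lambda^k\,\alpha k\, t^{\alpha k-1}/\Gamma(\alpha k+1)$. Applying $z/\Gamma(z+1)=1/\Gamma(z)$ with $z=\alpha k$ (and $\alpha k>0$ for $k\ge 1$) turns the coefficient into $1/\Gamma(\alpha k)$, and reindexing by $j=k-1$ rewrites the sum as $\lambda t^{\alpha-1}\sum_{j=0}^\infty \lambda^j t^{\alpha j}/\Gamma(\alpha j+\alpha)$, which is precisely $\lambda t^{\alpha-1}E_{\alpha,\alpha}(\lambda t^\alpha)$.

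For the second identity I would expand
\[ t^{\alpha-1}E_{\alpha,\alpha}(\lambda t^\alpha)=\sum_{k=0}^\infty\frac{\lambda^k t^{\alpha k+\alpha-1}}{\Gamma(\alpha k+\alpha)}, \]
differentiate termwise to obtain $\sum_{k=0}^\infty \lambda^k(\alpha k+\alpha-1)t^{\alpha k+\alpha-2}/\Gamma(\alpha k+\alpha)$, and invoke $z/\Gamma(z+1)=1/\Gamma(z)$ with $z=\alpha k+\alpha-1$ to reduce the coefficient to $1/\Gamma(\alpha k+\alpha-1)$; factoring out $t^{\alpha-2}$ then leaves $t^{\alpha-2}E_{\alpha,\alpha-1}(\lambda t^\alpha)$. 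The point of using the reciprocal-gamma form here rather than $\Gamma(z+1)=z\Gamma(z)$ is that $\alpha k+\alpha-1$ can land on a nonpositive integer (for instance $\alpha=1/(k+1)$ makes it $0$), where $\Gamma$ itself has a pole; since $1/\Gamma$ is entire, the identity $z/\Gamma(z+1)=1/\Gamma(z)$ still holds verbatim (both sides being $0$ at such $z$), so the manipulation is uniformly valid across all $\alpha\in(0,1)$.

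The step I expect to require the most care is the justification of termwise differentiation, not the algebra. Because $E_{\alpha,\beta}$ is entire in its argument when $\R\alpha>0$, the coefficients $1/\Gamma(\alpha k+\beta)$ decay super-geometrically, so on any compact interval $[\delta,R]\subset(0,\infty)$ the differentiated series converges uniformly; I would make this explicit by a Weierstrass $M$-test bound on the tail, which licenses interchanging $\tfrac{d}{dt}$ with the summation. The singular prefactors $t^{\alpha-1}$ and $t^{\alpha-2}$ are harmless here, since both identities are asserted only for $t>0$, so restricting attention to compact subsets of $(0,\infty)$ avoids the endpoint $t=0$ entirely.
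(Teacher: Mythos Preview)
Your proof is correct. The paper states this theorem without proof (it is a standard identity recorded in the appendix as background on Mittag-Leffler functions), so there is nothing to compare against; your direct termwise differentiation of the defining series, with the reciprocal-gamma identity handling the possible pole at $\alpha k+\alpha-1\in\mathbb{Z}_{\le 0}$, is exactly the expected argument.
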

\section{The perturbation and the spectral theorem of the selfadjoint operators}
\label{9.17The perturbation and the spectral theorem of the selfadjoint operators}
\subsection{The perturbation of the selfadjoint operators}
Recall that an operator $A\in\mathscr{C}(X,Y)$ is relatively bounded with respect to $T\in\mathscr{C}(X,Y)$ (or $T$-bounded) if $D(A)\supset D(T)$ and
\[ \left\lVert Au\right\rVert_Y\leq a\left\lVert u\right\rVert_X+b\left\lVert Tu\right\rVert_Y, \]
where $X,Y$ are Banach spaces and $b$ is called $T$-bound.
\begin{theorem}[\cite{Perturbation-Theory-for-Linear-Operators}]
Let $H$ be selfadjoint. If $T$ is symmetric and $H$-bounded with $H$-bound smaller than $1$, then $H+T$ is selfadjoint.
\label{9.1 perturbation selfadjoint operator}
\end{theorem}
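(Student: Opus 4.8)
The plan is to recognize this as the Kato--Rellich theorem and prove it via the standard surjectivity criterion for self-adjointness. First I would observe that $H+T$ is densely defined and symmetric: since the $H$-boundedness hypothesis forces $D(T)\supseteq D(H)$, we have $D(H+T)=D(H)$, which is dense because $H$ is self-adjoint, and for $u,v\in D(H)$ the symmetry of both $H$ and $T$ gives $\left\langle (H+T)u,v\right\rangle=\left\langle u,(H+T)v\right\rangle$. The remaining task is to upgrade symmetry to self-adjointness, and for this I would invoke the basic criterion: a densely defined symmetric operator $S$ is self-adjoint if and only if $\mathrm{Ran}(S+i\mu)=\mathrm{Ran}(S-i\mu)=\mathcal{H}$ for some real $\mu\neq0$. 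Thus everything reduces to showing that $H+T\pm i\mu$ maps $D(H)$ onto the whole space for a suitably chosen $\mu$.

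Next I would record the two key resolvent estimates for the unperturbed self-adjoint operator. For $u\in D(H)$ and real $\mu$, the identity
\[
\left\lVert (H\pm i\mu)u\right\rVert^2=\left\lVert Hu\right\rVert^2+\mu^2\left\lVert u\right\rVert^2
\]
holds, because the cross term $\pm i\mu\left(\left\langle Hu,u\right\rangle-\overline{\left\langle Hu,u\right\rangle}\right)$ vanishes (as $\left\langle Hu,u\right\rangle$ is real by symmetry). Since $H$ is self-adjoint, $H\pm i\mu$ is a bijection of $D(H)$ onto $\mathcal{H}$, and the identity yields the bounds $\left\lVert (H\pm i\mu)^{-1}\right\rVert\leq |\mu|^{-1}$ and $\left\lVert H(H\pm i\mu)^{-1}\right\rVert\leq 1$ on all of $\mathcal{H}$.

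Then comes the decisive factorization. I would write
\[
H+T\pm i\mu=\left(I+T(H\pm i\mu)^{-1}\right)(H\pm i\mu)
\]
and estimate the perturbation term: for any $v\in\mathcal{H}$, setting $u=(H\pm i\mu)^{-1}v$ and using the relative bound $\left\lVert Tu\right\rVert\leq a\left\lVert u\right\rVert+b\left\lVert Hu\right\rVert$ together with the two resolvent estimates gives
\[
\left\lVert T(H\pm i\mu)^{-1}v\right\rVert\leq\left(\frac{a}{|\mu|}+b\right)\left\lVert v\right\rVert.
\]
This is exactly the point where the hypothesis $b<1$ enters: choosing $|\mu|$ large enough makes $a/|\mu|+b<1$, so $I+T(H\pm i\mu)^{-1}$ is boundedly invertible by the Neumann series. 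As a composition of two bijections, $H+T\pm i\mu$ is then a bijection of $D(H)$ onto $\mathcal{H}$, giving the required surjectivity and hence self-adjointness. I expect the main conceptual obstacle to be setting up the argument around the correct criterion — surjectivity of $H+T\pm i\mu$ rather than a direct computation of the adjoint — after which the estimates are routine; the one subtlety to handle carefully is that the bound $a/|\mu|+b$ must be made strictly less than $1$ uniformly for both signs $\pm$, which is automatic once $|\mu|>a/(1-b)$.
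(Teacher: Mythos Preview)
Your argument is the standard Kato--Rellich proof and is correct as written: the symmetry of $H+T$ on $D(H)$, the resolvent identity $\lVert(H\pm i\mu)u\rVert^2=\lVert Hu\rVert^2+\mu^2\lVert u\rVert^2$, the factorization $H+T\pm i\mu=\bigl(I+T(H\pm i\mu)^{-1}\bigr)(H\pm i\mu)$, and the Neumann-series step with $|\mu|>a/(1-b)$ are all handled properly.

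Note, however, that the paper does not supply its own proof of this statement; it is quoted verbatim as a classical result from Kato's \emph{Perturbation Theory for Linear Operators} and used as a black box in the proof of Proposition~6.1. So there is nothing in the paper to compare against beyond observing that your proof is precisely the textbook argument the citation points to.
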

\subsection{The spectral theorem of the selfadjoint operators}
\label{9.5appendix The spectral theorem of the selfadjoint operator}
Let $H$ be a separable Hilbert space and $A$ be a selfadjoint operator. We denote $H$ by $D(A^0)$ and endow $D(A)$ with the graph norm $\left\lVert x\right\rVert_{D(A)}:=\left\lVert x\right\rVert_H+\left\lVert Ax\right\rVert_H$. Define $D\left(A^n\right),n\geq2$ by
\[ D\left(A^n\right):=\left\{x\in D\left(A^{n-1}\right):A^{n-1}x\in D(A)\right\}, \]
with the graph norm
\[ \left\lVert x\right\rVert_{D\left(A^n\right)}:=\left\lVert x\right\rVert_H+\left\lVert A^nx\right\rVert_H. \]
Note that $A^n$ is selfadjoint and $D(A^n)$ is a Banach space and also a Hilbert space. By induction, it's easy to check the following equivalence form,
\[ D(A^n)=\begin{cases}
H,\quad &n=0\\
D(A),\quad &n=1\\
\left\{x\in D(A):Ax\in D(A),\cdots,A^{n-1}x\in D(A)\right\},\quad &n\geq2
\end{cases}. \]
\begin{lemma}
Let $H$ be a separable Hilbert space, $A$ be a self-adjoint operator on $H$ with domain $D(A)$. Then there exists a measure space $\left(\Omega,\mu\right)$ with $\mu$ a finite measure, a unitary operator $U:L^2(\Omega)\to H$ and a real-valued function $a(\xi)$ on $\Omega$ which is finite a.e. such that,
\begin{enumerate}[label=(\roman*)]
\item $\psi\in D\left(A^n\right)\Longleftrightarrow\bigcup\limits_{k=0}^n\left\{a(\xi)^kU^{-1}\psi\right\}\subset L^2(\Omega)$,\quad $n\geq0$,
\item $U\varphi\in D(A^n)\Longrightarrow A^nU\varphi=U\left(a(\xi)^n\varphi\right)$,\quad $n\geq0$.
\end{enumerate}
Moreover, if $A$ is injective, we have, with $D\left(A^{-n}\right)=R\left(A^n\right)$,
\begin{enumerate}[label=(\alph*)]
\item $\psi\in D\left(A^{-n}\right)\Longleftrightarrow\bigcup\limits_{k=0}^n\left\{a(\xi)^{-k}U^{-1}\psi\right\}\subset L^2(\Omega)$,\quad $n\geq0$,
\item $U\varphi\in D(A^{-n})\Longrightarrow A^{-n}U\varphi=U\left(a(\xi)^{-n}\varphi\right)$,\quad $n\geq0$.
\end{enumerate}
In addition, the measure space $(\Omega,\mu)$ and the function $a(\xi)$ can be chosen such that $a\in L^p(\Omega)$ for all $p$ with $1\leq p<\infty.$
\label{Local solution lemma}
\end{lemma}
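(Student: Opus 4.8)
The plan is to deduce the statement from the standard multiplication-operator form of the spectral theorem and then upgrade the measure by a single weight change, implemented through a unitary that commutes with every multiplication operator, so that the spectral function $a$ itself is never altered and the domain descriptions transport verbatim.

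First I would invoke the classical spectral theorem for a self-adjoint operator on a separable Hilbert space: there exist a $\sigma$-finite measure space $(\Omega,\mu_0)$, a unitary $U_0\colon L^2(\Omega,\mu_0)\to H$, and a real-valued measurable function $a$, finite a.e., such that $U_0^{-1}AU_0=M_a$ is multiplication by $a$ on its natural domain $\{f\in L^2:af\in L^2\}$. The only features I need from this representation are that $a$ is real and finite a.e.\ and that conjugation by $U_0$ turns $A$ into $M_a$. Next I would replace $\mu_0$ by a finite measure for which $a$ is additionally $L^p$-integrable, at no cost to $a$. Writing $\Omega=\bigsqcup_n\Omega_n$ with $\mu_0(\Omega_n)<\infty$, I choose the strictly positive weight $\rho=c_n e^{-a^2}$ on $\Omega_n$ with $c_n=2^{-n}/(1+\mu_0(\Omega_n))$. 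Because $t\mapsto|t|^p e^{-t^2}$ is bounded for each fixed $p$, the measure $d\tilde\mu=\rho\,d\mu_0$ satisfies both $\tilde\mu(\Omega)<\infty$ and $\int_\Omega|a|^p\,d\tilde\mu<\infty$ for every $1\le p<\infty$, settling the finiteness of $\mu$ and the integrability $a\in L^p(\Omega)$ simultaneously. The map $V\colon L^2(\mu_0)\to L^2(\tilde\mu)$, $Vg=\rho^{-1/2}g$, is unitary, and a direct computation gives $VM_mV^{-1}=M_m$ with equal domains for any measurable $m$; in particular $V$ intertwines multiplication by $a$ and by every power $a^{k}$. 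Setting $U=U_0V^{-1}$ produces a unitary $U\colon L^2(\Omega,\tilde\mu)\to H$ with $U^{-1}AU=M_a$ and the required integrability of $a$.

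With this representation fixed, (i) and (ii) follow by induction. Since $A^n=UM_a^nU^{-1}$, the recursion $D(A^n)=\{x\in D(A^{n-1}):A^{n-1}x\in D(A)\}$ transports to $D(M_a^n)=\{f\in D(M_a^{n-1}):a^{n}f\in L^2\}$, whence $D(M_a^n)=\{f:a^kf\in L^2,\ 0\le k\le n\}$ by induction; this is exactly (i), and $A^nU\varphi=UM_a^n\varphi=U(a^n\varphi)$ gives (ii). The elementary inequality $|t|^k\le 1+|t|^n$ for $0\le k\le n$ identifies $M_a^n$ with $M_{a^n}$, so that $A^n$ is self-adjoint and the intermediate conditions are in fact governed by $k=n$ alone. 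For the injective case I would observe that $\ker A=\{0\}$ is equivalent to $a\neq0$ a.e.; then $a^{-1}$ is finite a.e., $A^{-1}=UM_{a^{-1}}U^{-1}$ on $R(A)=D(A^{-1})$, and repeating the same induction with $a^{-1}$ in place of $a$ yields (a) and (b).

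The part requiring the most care is not any single estimate but the bookkeeping ensuring that each reduction preserves the spectral structure exactly: I must verify that the weight change $V$ maps $D(M_m)$ onto $D(M_m)$ isometrically and commutes with $M_m$ for all the functions $m\in\{a^{\pm k}\}$ at once, so that the domain characterizations carry over verbatim and $a$ is left untouched throughout. Once this intertwining is checked, the two inductions and the injective case are routine.
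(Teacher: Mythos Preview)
Your argument is correct and shares the paper's overall strategy: start from the standard multiplication-operator spectral theorem and then run an induction on $n$ using the recursion $D(A^n)=\{x\in D(A^{n-1}):A^{n-1}x\in D(A)\}$. The paper simply cites Reed--Simon for the base case $n=1$ and the finiteness of $\mu$, and it does not prove the final clause about $a\in L^p(\Omega)$ at all---it merely asserts that such a choice is possible. Your explicit weight $\rho=c_ne^{-a^2}$ together with the multiplication-commuting unitary $V$ supplies that missing argument cleanly, and is a genuine addition rather than a deviation. For (a) and (b) there is a small difference in packaging: the paper argues directly from $D(A^{-n})=R(A^n)$ by writing $\psi=A^n\varphi$, applying the already-proven (i)--(ii), and reading off $a^{-k}U^{-1}\psi=a^{n-k}U^{-1}\varphi\in L^2$; you instead note that $A^{-1}$ is self-adjoint with representation $M_{a^{-1}}$ and rerun the positive-power induction with $a^{-1}$ in place of $a$. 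Both routes are short and correct; yours is slightly more structural, the paper's slightly more hands-on, and neither buys anything the other lacks.
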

\begin{proof}[Proof]
\textbf{Proof of} $\mathbf{(i)}$ \textbf{and} $\mathbf{(ii)}$. The case $n=0$ is trival. The proof of the case $n=1$ you can see \cite{Methods-of-Modern-Mathematical-Physics-VIII-Unbounded-Operators}. Assume that $(ii)$ is valid in the case $U\varphi\in D(A^{n-1})$. When $U\varphi\in D(A^n)$,
\[ A^nU\varphi=AU\left(a(\xi)^{n-1}\psi\right)=U\left(a(\xi)^n\varphi\right), \]
which completes the proof of $(ii)$. Similarly, Assume that $(i)$ is valid for the case $n-1$. For the case $n$, by the definition of $D(A^n)$, we have
\[ \psi\in D(A^n)\Longleftrightarrow\psi\in D(A^{n-1})\;and\;A^{n-1}\psi\in D(A). \]
On one hand, by assumption,
\[ \psi\in D(A^{n-1})\Longleftrightarrow\bigcup\limits_{k=0}^{n-1}\left\{a(\xi)^kU^{-1}\psi\right\}\subset L^2(\Omega), \]
on the other hand,
\[ A^{n-1}\psi\in D(A)\Longleftrightarrow a(\xi)U^{-1}\left(A^{n-1}\psi\right)\in L^2(\Omega)\Leftrightarrow a(\xi)^nU^{-1}\psi\in L^2(\Omega), \]
then we obtain
\[ \psi\in D\left(A^n\right)\Longleftrightarrow\bigcup\limits_{k=0}^n\left\{a(\xi)^kU^{-1}\psi\right\}\subset L^2(\Omega). \]
Then the proof of $(i)$ is complete.

\textbf{Proof of} $\mathbf{(a)}$ \textbf{and} $\mathbf{(b)}$. We just need to prove the case $n\geq1$. Recall that
\[ \psi\in D(A^{-n})\Longleftrightarrow\exists\varphi\in D(A^n)\;s.t.\;\psi=A^n\varphi. \]
By $(ii)$,
\[ U^{-1}\psi=U^{-1}\left(A^n\varphi\right)=a(\xi)^nU^{-1}\varphi, \]
that is
\[ a(\xi)^{-n}U^{-1}\psi=U^{-1}\varphi, \]
it follows that
\[ \psi\in D(A^{-n})\Longleftrightarrow\exists\varphi\in D(A^n)\;s.t.\;a(\xi)^{-n}U^{-1}\psi=U^{-1}\varphi. \]
But by $(i)$,
\[ \varphi\in D(A^n)\Longleftrightarrow\bigcup\limits_{k=0}^n\left\{a(\xi)^kU^{-1}\varphi\right\}\subset L^2(\Omega), \]
we can deduce that
\begin{align*}
\psi\in D(A^{-n})&\Longleftrightarrow\bigcup\limits_{k=0}^n\left\{a(\xi)^{-(n-k)}U^{-1}\psi\right\}\subset L^2(\Omega)\\
&\Longleftrightarrow\bigcup\limits_{k=0}^n\left\{a(\xi)^{-k}U^{-1}\psi\right\}\subset L^2(\Omega).
\end{align*}
Then we complete the proof of $(a)$ and $(b)$.
\end{proof}
Note that $a(\xi)\neq0$ a.e. on $\Omega$ which you can see the proof of Theorem VIII.4 in \cite{Methods-of-Modern-Mathematical-Physics-VIII-Unbounded-Operators}
\section{Some further results of the linear $(\ref{nonlinear Schrodinger equation Hilbert})$}
\label{Some further results of the linear}
\subsection{H\"older regularities}
\begin{proposition}
For $T>0$ and $0<t,s<T$, let $\frac{1}{2}<\alpha<1$ and $v\in L^q\left((0,T);H\right)$ where $\frac{1}{2\alpha-1}<q<\infty$. We have
\begin{equation}
\left\lVert Gv(t)-Gv(s)\right\rVert_H\lesssim\left(T^{\frac{1}{q'}+4\alpha-3}+T^{\frac{1}{q'}+2\alpha-1}+T^{\frac{1}{q'}-2(1-\alpha)}\right)\left\lvert t-s\right\rvert^{1-\alpha}\left\lVert v\right\rVert_{L_T^qH}.
\label{8.28 linear Holder regularity proposition1 equation1}
\end{equation}
\label{8.28 linear Holder regularity proposition1}
\end{proposition}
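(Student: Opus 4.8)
The plan is to mimic the proof of Proposition \ref{8.16proposition linear new approach bilinear2}, but to replace the $L^\infty$-in-time control of $v$ by Hölder's inequality, so that the weakly singular convolution kernels are measured in $L^{q'}$, where $q'$ denotes the conjugate exponent of $q$. Assume without loss of generality $t>s$ and set $h=t-s$. Splitting $G=G^l+G^h$ and recycling the decomposition from the earlier proof, write $G^hv(t)-G^hv(s)=I_1+I_2+I_3+I_4$ and $G^lv(t)-G^lv(s)=I_5+I_6$, where $I_2,I_4,I_6$ are the contributions of the new interval $[s,t]$ and $I_1,I_3,I_5$ are the differences over $[0,s]$. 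The goal is to show that each piece is bounded by $\lvert t-s\rvert^{1-\alpha}$ times a power of $T$ times $\lVert v\rVert_{L_T^qH}$.

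First I would handle the new-interval pieces. The size bounds $\lVert\mathbf{A}_t^{-2}\phi\rVert_H\lesssim t^{2\alpha}\lVert\phi\rVert_H$ (Lemma \ref{lemma linear new approach3}), $\lVert R_t^P\phi\rVert_H\lesssim t^{\alpha-1}\lVert\phi\rVert_H$ (Lemma \ref{lemma linear new approach4}) and the analogous elementary bound $\lvert\chi_tb(t,\xi)\rvert\lesssim t^{\alpha-1}$ for the low-frequency symbol dominate each of $I_2,I_4,I_6$ pointwise by $\int_s^t(t-\tau)^{\alpha-1}\lVert v(\tau)\rVert_H\,d\tau$. Hölder's inequality then produces the factor $\bigl(\int_s^t(t-\tau)^{(\alpha-1)q'}d\tau\bigr)^{1/q'}\sim h^{\alpha-1+1/q'}$, the integral converging precisely because $q'<\tfrac{1}{1-\alpha}$.

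Next the difference pieces $I_1,I_3,I_5$. Here the bilinear Lemmas \ref{8.16lemma linear new approach bilinear3} and \ref{8.16lemma linear new approach bilinear4}, together with the difference bound for $\chi_tb(t,\xi)$ obtained inside the proof of Proposition \ref{8.16proposition linear new approach bilinear2}, dominate the integrands by $K(\tau)\lVert v(\tau)\rVert_H$, where $K(\tau)=\bigl((s-\tau)^{\alpha-1}-(t-\tau)^{\alpha-1}\bigr)+\bigl((t-\tau)^\alpha-(s-\tau)^\alpha\bigr)$. After Hölder it remains to estimate $\lVert K\rVert_{L^{q'}(0,s)}$, which I would do by the rescaling $\eta=s-\tau=hu$: the first summand contributes $h^{\alpha-1+1/q'}$ times the finite integral $\bigl(\int_0^\infty(u^{\alpha-1}-(u+1)^{\alpha-1})^{q'}du\bigr)^{1/q'}$ (finite exactly when $(\alpha-1)q'>-1$ and $(\alpha-2)q'<-1$), while the second summand, whose tail is only borderline integrable, yields $h^{\alpha+1/q'}+h\,s^{\alpha-1+1/q'}$.

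Finally, extracting the common factor $\lvert t-s\rvert^{1-\alpha}$ from every term leaves residual exponents $2\alpha-2+\tfrac{1}{q'}$ (from $h^{\alpha-1+1/q'}$) and $2\alpha-1+\tfrac{1}{q'}$ (from $h^{\alpha+1/q'}$ and $h\,s^{\alpha-1+1/q'}$), both nonnegative exactly under the hypothesis $\tfrac{1}{2\alpha-1}<q<\infty$, so the surviving positive powers of $h$ and of $s\le T$ are absorbed into powers of $T$, giving $T^{1/q'+2\alpha-2}$ and $T^{1/q'+2\alpha-1}$; since the exponent $\tfrac{1}{q'}+4\alpha-3$ lies between these two, the stated three-term bound follows. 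I expect the crux to be the kernel estimates of the third step together with the admissible range of $q$: the condition is forced from both sides — $q'<\tfrac{1}{1-\alpha}$ is needed for the integrability of $(t-\tau)^{\alpha-1}$, whereas $q'\le\tfrac{1}{2(1-\alpha)}$ is what makes $2\alpha-2+\tfrac1{q'}\ge0$ so that the leftover power of $t-s$ is absorbed into $T$ rather than blowing up as $t\to s$, and the nonemptiness of this range is exactly why one needs $\alpha>\tfrac12$. The other delicate point is the non-integrable tail of the second summand, which can only be controlled by $s\le T$.
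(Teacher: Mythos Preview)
Your argument is correct and follows the paper's strategy: the same $I_1$--$I_6$ splitting inherited from Proposition~\ref{8.16proposition linear new approach bilinear2}, H\"older's inequality, and estimation of the difference kernel $K(\tau)=(s-\tau)^{\alpha-1}-(t-\tau)^{\alpha-1}+(t-\tau)^\alpha-(s-\tau)^\alpha$ in $L^{q'}(0,s)$. The only difference is tactical. For the first summand of $K$ the paper extracts $(t-s)^{1-\alpha}$ pointwise via the subadditivity bound $(t-\tau)^{1-\alpha}-(s-\tau)^{1-\alpha}\le(t-s)^{1-\alpha}$, which leaves $(s-\tau)^{\alpha-1}(t-\tau)^{\alpha-1}\le(s-\tau)^{2(\alpha-1)}$ and makes the constraint $q'<\tfrac{1}{2(1-\alpha)}$ appear directly as an integrability condition; for the second summand it simply uses $(t-\tau)^\alpha-(s-\tau)^\alpha\le(t-s)^\alpha$, giving $s^{1/q'}(t-s)^\alpha$. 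Your rescaling $s-\tau=hu$ reaches the same final bounds, with the constraint $q'<\tfrac{1}{2(1-\alpha)}$ instead emerging only at the step where the leftover power $h^{2\alpha-2+1/q'}$ is absorbed into $T$. Your observation that the exponent $\tfrac{1}{q'}+4\alpha-3$ lies strictly between $\tfrac{1}{q'}-2(1-\alpha)$ and $\tfrac{1}{q'}+2\alpha-1$, and is therefore a redundant term in the stated estimate, is correct.
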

\begin{proof}[Proof]
Using the notations in Proposition $\ref{8.16proposition linear new approach bilinear2}$, we first have
\begin{align*}
\left\lVert I_1\right\rVert_H&\lesssim\int_0^s\left(\left(s-\tau\right)^{\alpha-1}-\left(t-\tau\right)^{\alpha-1}+\left(t-\tau\right)^\alpha-\left(s-\tau\right)^\alpha\right)\left\lVert v(\tau)\right\rVert_Hd\tau\\
&\leq\left(\int_0^s\left(\left(s-\tau\right)^{\alpha-1}-\left(t-\tau\right)^{\alpha-1}+\left(t-\tau\right)^\alpha-\left(s-\tau\right)^\alpha\right)^{q'}d\tau\right)^{\frac{1}{q'}}\left\lVert v\right\rVert_{L_T^qH}\\
&\lesssim\left(\left(t-s\right)^{1-\alpha}\left(\int_0^s\left(s-\tau\right)^{(\alpha-1)q'}\left(t-\tau\right)^{(\alpha-1)q'}d\tau\right)^{\frac{1}{q'}}+s^{\frac{1}{q'}}\left(t-s\right)^\alpha\right)\left\lVert v\right\rVert_{L_T^qH}\\
&\leq\left(\left(t-s\right)^{1-\alpha}\left(\int_0^s\left(s-\tau\right)^{2(\alpha-1)q'}d\tau\right)^{\frac{1}{q'}}+s^{\frac{1}{q'}}\left(t-s\right)^\alpha\right)\left\lVert v\right\rVert_{L_T^qH}\\
&\lesssim\left(s^{\frac{1}{q'}-2(1-\alpha)}\left(t-s\right)^{1-\alpha}+s^{\frac{1}{q'}}\left(t-s\right)^\alpha\right)\left\lVert v\right\rVert_{L_T^qH}\\
&\leq\left(T^{\frac{1}{q'}+4\alpha-3}+T^{\frac{1}{q'}+2\alpha-1}\right)\left(t-s\right)^{1-\alpha}\left\lVert v\right\rVert_{L_T^qH}
\end{align*}
and
\[ \left\lVert I_3\right\rVert_H\lesssim\left(T^{\frac{1}{q'}+4\alpha-3}+T^{\frac{1}{q'}+2\alpha-1}\right)\left(t-s\right)^{1-\alpha}\left\lVert v\right\rVert_{L_T^qH}. \]
Also we have
\begin{align*}
\left\lVert I_2\right\rVert_H&\lesssim\int_s^t\left(t-\tau\right)^{\alpha-1}\left\lVert v(\tau)\right\rVert_Hd\tau\\
&\leq\left(\int_s^t\left(t-\tau\right)^{(\alpha-1)q'}d\tau\right)^{\frac{1}{q'}}\left\lVert v\right\rVert_{L_T^qH}\\
&\lesssim\left(t-s\right)^{\frac{1}{q'}-(1-\alpha)}\left\lVert v\right\rVert_{L_T^qH}\\
&\leq T^{\frac{1}{q'}-2(1-\alpha)}\left(t-s\right)^{1-\alpha}\left\lVert v\right\rVert_{L_T^qH}
\end{align*}
and
\[ \left\lVert I_4\right\rVert_H\lesssim T^{\frac{1}{q'}-2(1-\alpha)}\left(t-s\right)^{1-\alpha}\left\lVert v\right\rVert_{L_T^qH}. \]
Then there holds
\[ \left\lVert G^hv(t)-G^hv(s)\right\rVert_H\lesssim\left(T^{\frac{1}{q'}+4\alpha-3}+T^{\frac{1}{q'}+2\alpha-1}+T^{\frac{1}{q'}-2(1-\alpha)}\right)\left(t-s\right)^{1-\alpha}\left\lVert v\right\rVert_{L_T^qH}. \]
On the other hand, since
\begin{align*}
\left\lVert I_5\right\rVert_H&\lesssim\int_0^s\left(\left(s-\tau\right)^{\alpha-1}-\left(t-\tau\right)^{\alpha-1}+\left(t-\tau\right)^\alpha-\left(s-\tau\right)^\alpha\right)\left\lVert v(\tau)\right\rVert_Hd\tau\\
&\lesssim\left(T^{\frac{1}{q'}+4\alpha-3}+T^{\frac{1}{q'}+2\alpha-1}\right)\left(t-s\right)^{1-\alpha}\left\lVert v\right\rVert_{L_T^qH}
\end{align*}
and
\begin{align*}
\left\lVert I_6\right\rVert_H&\lesssim\int_s^t\left(t-\tau\right)^{\alpha-1}\left\lVert v(\tau)\right\rVert_Hd\tau\\
&\lesssim T^{\frac{1}{q'}-2(1-\alpha)}\left(t-s\right)^{1-\alpha}\left\lVert v\right\rVert_{L_T^qH},
\end{align*}
it follows that
\[ \left\lVert G^lv(t)-G^lv(s)\right\rVert_H\lesssim\left(T^{\frac{1}{q'}+4\alpha-3}+T^{\frac{1}{q'}+2\alpha-1}+T^{\frac{1}{q'}-2(1-\alpha)}\right)\left(t-s\right)^{1-\alpha}\left\lVert v\right\rVert_{L_T^qH}. \]
Then $(\ref{8.28 linear Holder regularity proposition1 equation1})$ thus holds.
\end{proof}
\begin{theorem}
For $T>0$, let $x\in H$, $F\in L^\infty\left((0,T);H\right)$ and $u$ be the mild solution of the linear $(\ref{nonlinear Schrodinger equation Hilbert})$ on $[0,T]$, then $u\in C^\alpha\left([\delta,T];H\right)$ for every $0<\delta<T$ with the estimate
\begin{equation}
\left[u\right]_{C_{[\delta,T]}^\alpha H}\lesssim T^{1-\alpha}\left(1+\delta^{-1}\right)\left\lVert x\right\rVert_H+T\left\lVert F\right\rVert_{L_T^\infty H}.
\label{8.28 linear Holder regularity theorem1 equation1}
\end{equation}
If moreover $x\in D(A)$, $u\in C^\alpha\left([0,T];H\right)$ with the estimate
\begin{equation}
\left[u\right]_{C_T^\alpha H}\lesssim(1+T)\left(\left\lVert x\right\rVert_{D(A)}+\left\lVert F\right\rVert_{L_T^\infty H}\right).
\label{8.28 linear Holder regularity theorem1 equation2}
\end{equation}
\label{8.28 linear Holder regularity theorem1}
\end{theorem}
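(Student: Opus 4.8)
The plan is to start from the mild solution representation $u(t)=S_tx+iGF(t)$ and estimate the two pieces separately, reducing everything to the difference estimates already established in Proposition $\ref{proposition linear new approach bilinear1}$ and Proposition $\ref{8.16proposition linear new approach bilinear2}$. Writing
\[ u(t)-u(s)=\left(S_tx-S_sx\right)+i\left(GF(t)-GF(s)\right), \]
it suffices to control each summand by a constant (depending on $T$, and additionally on $\delta$ in the first case) times $\lvert t-s\rvert^\alpha$, and then divide by $\lvert t-s\rvert^\alpha$ to read off the H\"older seminorm. The whole argument therefore turns on two elementary scalar inequalities that convert the various power differences appearing in those propositions into the H\"older scale: the subadditivity bound $\lvert t^\alpha-s^\alpha\rvert\le\lvert t-s\rvert^\alpha$ (valid since $r\mapsto r^\alpha$ is concave for $0<\alpha<1$), and the mean value bound $\lvert t^{\alpha+1}-s^{\alpha+1}\rvert\lesssim T^\alpha\lvert t-s\rvert\le T\lvert t-s\rvert^\alpha$, where in the last step I use $\lvert t-s\rvert^{1-\alpha}\le T^{1-\alpha}$ on $[0,T]$.

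For the inhomogeneous part, which is common to both estimates, Proposition $\ref{8.16proposition linear new approach bilinear2}$ already gives $\lVert GF(t)-GF(s)\rVert_H\lesssim\left(\lvert t-s\rvert^\alpha+\lvert t^{\alpha+1}-s^{\alpha+1}\rvert\right)\lVert F\rVert_{L_T^\infty H}$, and this holds for all $0\le s,t\le T$ since $GF(0)=0$. Applying the second scalar inequality bounds the troublesome term by $T\lvert t-s\rvert^\alpha$, so that $\lVert GF(t)-GF(s)\rVert_H\lesssim(1+T)\lvert t-s\rvert^\alpha\lVert F\rVert_{L_T^\infty H}$, which supplies the $F$-dependent contribution in both $(\ref{8.28 linear Holder regularity theorem1 equation1})$ and $(\ref{8.28 linear Holder regularity theorem1 equation2})$.

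The homogeneous part is where the two estimates diverge, according to the regularity assumed on $x$. With $x\in H$ only, I would use $(\ref{proposition linear new approach bilinear1 equation1})$, namely $\lVert S_tx-S_sx\rVert_H\lesssim\left(1+(t\wedge s)^{-1}\right)\lvert t-s\rvert\lVert x\rVert_H$; restricting to $[\delta,T]$ gives $(t\wedge s)^{-1}\le\delta^{-1}$, and writing $\lvert t-s\rvert\le T^{1-\alpha}\lvert t-s\rvert^\alpha$ produces exactly the factor $T^{1-\alpha}(1+\delta^{-1})\lVert x\rVert_H$ of $(\ref{8.28 linear Holder regularity theorem1 equation1})$. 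The restriction to $[\delta,T]$ is forced precisely by the $(t\wedge s)^{-1}$ singularity, so this route cannot reach $t=0$. When $x\in D(A)$ I would instead invoke $(\ref{proposition linear new approach bilinear1 equation3})$, $\lVert S_tx-S_sx\rVert_H\lesssim\left(\lvert t^\alpha-s^\alpha\rvert+\lvert t^{\alpha+1}-s^{\alpha+1}\rvert\right)\lVert x\rVert_{D(A)}$, which carries no blow-up near the origin; the two scalar inequalities then give $\lVert S_tx-S_sx\rVert_H\lesssim(1+T)\lvert t-s\rvert^\alpha\lVert x\rVert_{D(A)}$, matching $(\ref{8.28 linear Holder regularity theorem1 equation2})$.

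Combining the pieces yields $(\ref{8.28 linear Holder regularity theorem1 equation1})$ on $[\delta,T]$ and $(\ref{8.28 linear Holder regularity theorem1 equation2})$ on $(0,T]$. The one remaining point is to extend the second estimate to the closed interval $[0,T]$, i.e. to control $\lVert u(t)-u(0)\rVert_H/t^\alpha$. Since Proposition $\ref{proposition linear new approach bilinear1}$ and Proposition $\ref{8.16proposition linear new approach bilinear2}$ are stated for $s,t>0$, I would argue by passing to the limit $s\downarrow0$: Proposition $\ref{lemma linear new2}$ gives $S_sx\to x$ and $GF(s)\to0$, and the displayed bounds are continuous in $s$, so letting $s\downarrow0$ produces $\lVert S_tx-x\rVert_H\lesssim(t^\alpha+t^{\alpha+1})\lVert x\rVert_{D(A)}$ and $\lVert GF(t)\rVert_H\lesssim t^\alpha\lVert F\rVert_{L_T^\infty H}$, giving the endpoint bound with the same constants. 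I expect this endpoint extension, together with the bookkeeping of which difference estimate is admissible on which interval, to be the only genuinely delicate point; everything else is the mechanical substitution of the already-proven difference estimates and the two scalar inequalities above.
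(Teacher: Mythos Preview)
Your proposal is correct and follows essentially the same approach as the paper: split $u(t)-u(s)$ via the mild solution representation, apply Proposition~\ref{proposition linear new approach bilinear1} (equation $(\ref{proposition linear new approach bilinear1 equation1})$ for $x\in H$, equation $(\ref{proposition linear new approach bilinear1 equation3})$ for $x\in D(A)$) to the homogeneous part and Proposition~\ref{8.16proposition linear new approach bilinear2} to the inhomogeneous part, then convert the resulting power differences into $\lvert t-s\rvert^\alpha$ exactly via the two scalar inequalities you write down. Your explicit treatment of the endpoint $s=0$ by passing to the limit is a detail the paper leaves implicit, but it is a sound addition rather than a deviation.
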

\begin{proof}[Proof]
By the representation of the mild solution that $u=S_tx+iGF(t)$ and Proposition $\ref{proposition linear new approach bilinear1}$, Proposition $\ref{8.16proposition linear new approach bilinear2}$ we can obtain
\begin{align*}
\left\lVert u(t)-u(s)\right\rVert_H&\lesssim\left(1+\left(t\wedge s\right)^{-1}\right)\left\lvert t-s\right\rvert\left\lVert x\right\rVert_H+\left(\left\lvert t-s\right\rvert^\alpha+\left\lvert t^{\alpha+1}-s^{\alpha+1}\right\rvert\right)\left\lVert F\right\rVert_{L_T^\infty H}\\
&\lesssim T^{1-\alpha}\left(1+\delta^{-1}\right)\left\lvert t-s\right\rvert^\alpha\left\lVert x\right\rVert_H+T\left\lvert t-s\right\rvert^\alpha\left\lVert F\right\rVert_{L_T^\infty H}
\end{align*}
which implies $(\ref{8.28 linear Holder regularity theorem1 equation1})$. And for $x\in D(A)$, we have
\begin{align*}
\left\lVert u(t)-u(s)\right\rVert_H&\lesssim\left(\left\lvert t-s\right\rvert^\alpha+\left\lvert t^{\alpha+1}-s^{\alpha+1}\right\rvert\right)\left(\left\lVert x\right\rVert_{D(A)}+\left\lVert F\right\rVert_{L_T^\infty H}\right)\\
&\lesssim(1+T)\left\lvert t-s\right\rvert^\alpha\left(\left\lVert x\right\rVert_{D(A)}+\left\lVert F\right\rVert_{L_T^\infty H}\right)
\end{align*}
which implies $(\ref{8.28 linear Holder regularity theorem1 equation2})$.
\end{proof}
\begin{theorem}
For $T>0$, let $x\in H$, $F\in L^q\left((0,T);H\right)$ where $\frac{1}{2\alpha-1}<q<\infty$ and $u$ be the mild solution the linear $(\ref{nonlinear Schrodinger equation Hilbert})$ on $[0,T]$, then $u\in C^{1-\alpha}\left([\delta,T];H\right)$ for any $0<\delta<T$ with the estimate
\begin{equation}
\left[u\right]_{C_{[\delta,T]}^{1-\alpha}H}\lesssim\left(1+\delta^{-1}\right)T^\alpha\left\lVert x\right\rVert_H+\left(T^{\frac{1}{q'}+4\alpha-3}+T^{\frac{1}{q'}+2\alpha-1}+T^{\frac{1}{q'}-2(1-\alpha)}\right)\left\lVert F\right\rVert_{L_T^qH}.
\label{8.29 linear Holder regularity theorem2 equation1}
\end{equation}
If moreover $x\in D(A)$, $u\in C^{1-\alpha}\left([0,T];H\right)$ with the estimate
\begin{equation}
\left[u\right]_{C_T^{1-\alpha}H}\lesssim\left(T^{2\alpha-1}+T^{2\alpha}\right)\left\lVert x\right\rVert_{D(A)}+\left(T^{\frac{1}{q'}+4\alpha-3}+T^{\frac{1}{q'}+2\alpha-1}+T^{\frac{1}{q'}-2(1-\alpha)}\right)\left\lVert F\right\rVert_{L_T^qH}.
\label{8.29 linear Holder regularity theorem2 equation2}
\end{equation}
\label{8.29 linear Holder regularity theorem2}
\end{theorem}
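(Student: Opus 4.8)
The plan is to exploit the mild-solution representation $u=S_tx+iGF(t)$ from $(\ref{mild solution1})$, so that the Hölder seminorm splits into a contribution from the linear flow $S_tx$ and a contribution from the Duhamel term $GF$. The proof will then run exactly parallel to Theorem $\ref{8.28 linear Holder regularity theorem1}$, the only differences being that here the inhomogeneity lies in $L^q$ rather than $L^\infty$ and the resulting Hölder exponent is $1-\alpha$ instead of $\alpha$. Indeed, the hard analytic work has already been carried out in Proposition $\ref{8.28 linear Holder regularity proposition1}$, so what remains is essentially to combine that estimate with the $S_t$-difference bounds and to do the bookkeeping on the powers of $T$.

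For the Duhamel part I would invoke Proposition $\ref{8.28 linear Holder regularity proposition1}$ directly with $v=F$, giving
\[
\left\lVert GF(t)-GF(s)\right\rVert_H\lesssim\left(T^{\frac{1}{q'}+4\alpha-3}+T^{\frac{1}{q'}+2\alpha-1}+T^{\frac{1}{q'}-2(1-\alpha)}\right)\lvert t-s\rvert^{1-\alpha}\left\lVert F\right\rVert_{L_T^qH}.
\]
This is precisely the $F$-dependent term appearing in both $(\ref{8.29 linear Holder regularity theorem2 equation1})$ and $(\ref{8.29 linear Holder regularity theorem2 equation2})$; note that the hypothesis $q>\frac{1}{2\alpha-1}$ is exactly what makes the singular integrals in Proposition $\ref{8.28 linear Holder regularity proposition1}$ convergent (it forces $(\alpha-1)q'>-1$).

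For the flow part I would use Proposition $\ref{proposition linear new approach bilinear1}$. When $x\in H$ only, the bound $\left\lVert S_tx-S_sx\right\rVert_H\lesssim\left(1+(t\wedge s)^{-1}\right)\lvert t-s\rvert\left\lVert x\right\rVert_H$ holds; on $[\delta,T]$ we have $t\wedge s\geq\delta$, and writing $\lvert t-s\rvert=\lvert t-s\rvert^{1-\alpha}\lvert t-s\rvert^{\alpha}\leq T^\alpha\lvert t-s\rvert^{1-\alpha}$ produces the term $(1+\delta^{-1})T^\alpha\left\lVert x\right\rVert_H$ of $(\ref{8.29 linear Holder regularity theorem2 equation1})$. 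When $x\in D(A)$ I would instead use $\left\lVert S_tx-S_sx\right\rVert_H\lesssim\left(\lvert t^\alpha-s^\alpha\rvert+\lvert t^{\alpha+1}-s^{\alpha+1}\rvert\right)\left\lVert x\right\rVert_{D(A)}$ and estimate each power difference against $\lvert t-s\rvert^{1-\alpha}$: since $\alpha>\frac{1}{2}$ one has $\lvert t^\alpha-s^\alpha\rvert\leq\lvert t-s\rvert^\alpha=\lvert t-s\rvert^{1-\alpha}\lvert t-s\rvert^{2\alpha-1}\leq T^{2\alpha-1}\lvert t-s\rvert^{1-\alpha}$, while the mean value theorem gives $\lvert t^{\alpha+1}-s^{\alpha+1}\rvert\lesssim T^\alpha\lvert t-s\rvert\leq T^{2\alpha}\lvert t-s\rvert^{1-\alpha}$; together these yield the term $(T^{2\alpha-1}+T^{2\alpha})\left\lVert x\right\rVert_{D(A)}$ of $(\ref{8.29 linear Holder regularity theorem2 equation2})$, now valid down to $t=0$ because the singular factor $(t\wedge s)^{-1}$ no longer appears.

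The only genuinely delicate point is this last power bookkeeping: it is the standing assumption $\frac{1}{2}<\alpha<1$ (equivalently $2\alpha-1>0$) together with the boundedness $\lvert t-s\rvert\leq T$ that lets every power difference be absorbed into a single factor $\lvert t-s\rvert^{1-\alpha}$ with a nonnegative power of $T$ in front. Once the two contributions are added and divided by $\lvert t-s\rvert^{1-\alpha}$, the claimed seminorm estimates $(\ref{8.29 linear Holder regularity theorem2 equation1})$ and $(\ref{8.29 linear Holder regularity theorem2 equation2})$ follow, with continuity up to $t=0$ in the $D(A)$ case and only on $[\delta,T]$ otherwise.
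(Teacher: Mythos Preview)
Your proposal is correct and follows essentially the same route as the paper: split $u=S_tx+iGF$, apply Proposition~\ref{8.28 linear Holder regularity proposition1} to the Duhamel term and Proposition~\ref{proposition linear new approach bilinear1} to $S_tx$, then convert the resulting increments to $\lvert t-s\rvert^{1-\alpha}$ via $\lvert t-s\rvert\leq T$ and the standing assumption $\alpha>\tfrac12$. The only cosmetic difference is that the paper is terser about the power bookkeeping you spell out.
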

\begin{proof}[Proof]
To simplify, let $C=T^{\frac{1}{q'}+4\alpha-3}+T^{\frac{1}{q'}+2\alpha-1}+T^{\frac{1}{q'}-2(1-\alpha)}$. With the help of Proposition $\ref{proposition linear new approach bilinear1}$ and Proposition $\ref{8.28 linear Holder regularity proposition1}$ it follows that
\begin{align*}
\left\lVert u(t)-u(s)\right\rVert_H&\lesssim\left(1+\left(t\wedge s\right)^{-1}\right)\left\lvert t-s\right\rvert\left\lVert x\right\rVert_H+C\left\lvert t-s\right\rvert^{1-\alpha}\left\lVert F\right\rVert_{L_T^qH}\\
&\leq\left(1+\delta^{-1}\right)T^\alpha\left\lvert t-s\right\rvert^{1-\alpha}\left\lVert x\right\rVert_H+C\left\lvert t-s\right\rvert^{1-\alpha}\left\lVert F\right\rVert_{L_T^qH}
\end{align*}
which implies $(\ref{8.29 linear Holder regularity theorem2 equation1})$. And for $x\in D(A)$, we have
\begin{align*}
\left\lVert u(t)-u(s)\right\rVert_H&\lesssim\left(\left\lvert t-s\right\rvert^\alpha+\left\lvert t^{\alpha+1}-s^{\alpha+1}\right\rvert\right)\left\lVert x\right\rVert_{D(A)}+C\left\lvert t-s\right\rvert^{1-\alpha}\left\lVert F\right\rVert_{L_T^qH}\\
&\lesssim\left(T^{2\alpha-1}+T^{2\alpha}\right)\left\lvert t-s\right\rvert^{1-\alpha}\left\lVert x\right\rVert_{D(A)}+C\left\lvert t-s\right\rvert^{1-\alpha}\left\lVert F\right\rVert_{L_T^qH}
\end{align*}
which implies $(\ref{8.29 linear Holder regularity theorem2 equation2})$.
\end{proof}
\subsection{Asymptotic behaviors}
It's easy to show that if $x\in H$, $F\in L^\infty\left((0,\infty);H\right)$, then there is a mild solution $u\in C\left([0,\infty);H\right)$ of the linear $(\ref{nonlinear Schrodinger equation Hilbert})$ on $[0,\infty)$ satisfying $u(t)=S_tx+iGF(t)$.
\begin{theorem}
If $A$ is injective, let $x\in D(A^{-1})$, $F\in L^\infty\left((0,\infty);H\right)$ and $u$ be the mild solution of the linear $(\ref{nonlinear Schrodinger equation Hilbert})$ on $[0,\infty)$. If there exists $F_0\in D(A^{-1})$ such that
\[ \lim\limits_{t\to\infty}\int_0^t\left(t-\tau\right)^{\alpha-1}\left\lVert F(\tau)-F_0\right\rVert_Hd\tau=0, \]
then $u$ satisfies
\[ \lim\limits_{t\to\infty}u(t)=-A^{-1}F_0. \]
\label{8.29 Asymptotic behaviours theorem1}
\end{theorem}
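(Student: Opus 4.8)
The plan is to work entirely in the spectral representation and split
\[ u(t)=S_tx+iG(F-F_0)(t)+iGF_0(t),\qquad GF_0(t):=\int_0^tP_{t-\tau}F_0\,d\tau, \]
treating $F_0$ as a constant-in-time function so that the decomposition is legitimate by linearity of $G$. I will show that the first two terms vanish as $t\to\infty$ while $iGF_0(t)\to -A^{-1}F_0$, which gives the claim.

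First I would dispose of $S_tx$. Writing $S_tx=U\left(a(t,\xi)U^{-1}x\right)$ with $a(t,\xi)=E_{\alpha,1}\left(ia(\xi)t^\alpha\right)$, the asymptotic expansion (Theorem \ref{Mittag-Leffler function asymptotic expansion}) applied to $z=ia(\xi)t^\alpha$ (whose argument is $\pm\tfrac{\pi}{2}$, admissible since one may take $\mu\in(\tfrac{\pi\alpha}{2},\tfrac{\pi}{2})$) shows $a(t,\xi)=O(t^{-\alpha})\to0$ for a.e.\ $\xi$, using $a(\xi)\neq0$ a.e. Together with the uniform bound $\lvert a(t,\xi)\rvert\lesssim1$ read off from $(\ref{lemmalemma linearlinear new2 equation2})$, the integrand $\lvert a(t,\xi)\rvert^2\lvert U^{-1}x\rvert^2$ is dominated by $C\lvert U^{-1}x\rvert^2\in L^1(\Omega)$, so dominated convergence yields $\lVert S_tx\rVert_H^2=\int_\Omega\lvert a(t,\xi)\rvert^2\lvert U^{-1}x\rvert^2\,d\mu\to0$ (this step needs only $x\in H$). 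For the second term, $(\ref{lemma linear new3 proof equation3})$ applied to $v=F-F_0$ gives $\lVert G(F-F_0)(t)\rVert_H\lesssim\int_0^t(t-\tau)^{\alpha-1}\lVert F(\tau)-F_0\rVert_H\,d\tau$, which tends to $0$ exactly by hypothesis.

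The heart of the matter is $GF_0(t)$. After substituting $\sigma=t-\tau$ I would write $GF_0(t)=\int_0^tP_\sigma F_0\,d\sigma=U\left(\int_0^tb(\sigma,\xi)\,d\sigma\,U^{-1}F_0\right)$, pulling the bounded operator $U$ outside the Bochner integral. Using $b(\sigma,\xi)=\frac{1}{ia(\xi)}\frac{d}{d\sigma}a(\sigma,\xi)$ (from $\frac{d}{dt}a(t,\xi)=ia(\xi)b(t,\xi)$, Theorem \ref{Derivative Mittag Leffler}) together with $a(0,\xi)=E_{\alpha,1}(0)=1$, the $\sigma$-integral evaluates in closed form to $\frac{a(t,\xi)-1}{ia(\xi)}$, so $GF_0(t)=U\left(\frac{a(t,\xi)-1}{ia(\xi)}U^{-1}F_0\right)$. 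As $t\to\infty$ the multiplier tends a.e.\ to $\frac{-1}{ia(\xi)}=\frac{i}{a(\xi)}$, and the corresponding element of $H$ is $iU\left(a(\xi)^{-1}U^{-1}F_0\right)=iA^{-1}F_0$; it is precisely the hypothesis $F_0\in D(A^{-1})$ that, via Lemma \ref{Local solution lemma}, guarantees $a(\xi)^{-1}U^{-1}F_0\in L^2(\Omega)$ so that $A^{-1}F_0$ is well defined. To upgrade the pointwise convergence to convergence in $H$, I would note the exact identity $GF_0(t)-iA^{-1}F_0=U\left(\frac{a(t,\xi)}{ia(\xi)}U^{-1}F_0\right)$, whose squared norm is $\int_\Omega\lvert a(t,\xi)\rvert^2\,\lvert a(\xi)^{-1}U^{-1}F_0\rvert^2\,d\mu$; the integrand is dominated by $C\lvert a(\xi)^{-1}U^{-1}F_0\rvert^2\in L^1(\Omega)$ and tends to $0$ a.e., so dominated convergence gives $GF_0(t)\to iA^{-1}F_0$, hence $iGF_0(t)\to -A^{-1}F_0$.

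The main obstacle is this last step: the closed-form evaluation of $\int_0^tb(\sigma,\xi)\,d\sigma$ via the antiderivative relation, and then justifying passage to the limit under the integral in the presence of the singular weight $a(\xi)^{-1}$, where $F_0\in D(A^{-1})$ and the a.e.\ non-vanishing of $a(\xi)$ are indispensable. The $S_tx$ and $G(F-F_0)$ contributions, by contrast, are routine given the linear estimates already in hand. Adding the three limits gives $\lim_{t\to\infty}u(t)=-A^{-1}F_0$.
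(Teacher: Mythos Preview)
Your proof is correct and follows essentially the same route as the paper: the same three-term split $u=S_tx+iG(F-F_0)+iGF_0$, the same closed-form evaluation $GF_0(t)=iA^{-1}F_0-iA^{-1}S_tF_0$ via the antiderivative relation $\frac{d}{dt}a(t,\xi)=ia(\xi)b(t,\xi)$, and dominated convergence (using $F_0\in D(A^{-1})$) to pass to the limit. The only cosmetic difference is that you dispatch $S_tx\to0$ by dominated convergence with merely $x\in H$, whereas the paper extracts the quantitative rate $\lVert S_tx\rVert_H\lesssim t^{-\alpha}\lVert x\rVert_{D(A^{-1})}$ from the low/high decomposition; either is fine here.
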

\begin{proof}[Proof]
Thanks to Lemma $\ref{lemma linear new approach1}$, Lemma $\ref{lemma linear new approach2}$ and Proposition $\ref{lemma linear new2}$, we have
\[ \left\lVert S_tx\right\rVert_H\lesssim t^{-\alpha}\left\lVert x\right\rVert_{D(A^{-1})} \]
which implies that $\lim\limits_{t\to\infty}S_tx=0$. On the other hand, we can devide $GF(t)$ into two parts such that
\[ GF(t)=\int_0^tP_{t-\tau}\left(F(\tau)-F_0\right)d\tau+\int_0^tP_{t-\tau}F_0d\tau=:v_1(t)+v_2(t). \]
A straightforward computation leads to
\begin{align*}
v_2(t)&=\int_0^tP_{t-\tau}F_0d\tau=U\left(\int_0^tb(t-\tau,\xi)d\tau U^{-1}F_0\right)\\
&=U\left(\int_0^tia(\xi)^{-1}\frac{d}{d\tau}a(t-\tau,\xi)d\tau U^{-1}F_0\right)\\
&=iA^{-1}F_0-iA^{-1}S_tF_0.
\end{align*}
By Lebesgue's dominated theorem, there holds
\begin{align*}
\left\lVert iA^{-1}S_tF_0\right\rVert_H&\leq\left\lVert A^{-1}S_t^lF_0\right\rVert_H+\left\lVert A^{-1}S_t^hF_0\right\rVert_H\\
&\lesssim\left\lVert A^{-1}S_t^lF_0\right\rVert_H+\left\lVert t^{-\alpha}A^{-1}\mathbf{A}_t^{-1}F_0\right\rVert_H+\left\lVert A^{-1}R_t^SF_0\right\rVert_H\\
&=\left\lVert a(\xi)^{-1}\chi_ta(t,\xi)U^{-1}F_0\right\rVert_{L^2(\Omega)}+\left\lVert t^{-\alpha}a(\xi)^{-2}\chi_t^cU^{-1}F_0\right\rVert_{L^2(\Omega)}\\
&+\left\lVert t^{-2\alpha}a(\xi)^{-1}O\left(\left\lvert a(\xi)\right\rvert^{-2}\right)\chi_t^cU^{-1}F_0\right\rVert_{L^2(\Omega)}\\
&\to0,\quad t\to\infty.
\end{align*}
This implies that $\lim\limits_{t\to\infty}v_2(t)=iA^{-1}F_0$. By Assumption and $(\ref{lemma linear new3 proof equation3})$ we obtain
\[ \left\lVert v_1(t)\right\rVert_H\lesssim\int_0^t\left(t-\tau\right)^{\alpha-1}\left\lVert F(\tau)-F_0\right\rVert_Hd\tau\to0,\quad t\to\infty. \]
It follows that $\lim\limits_{t\to\infty}GF(t)=iA^{-1}F_0$ and hence the result holds.
\end{proof}
\begin{theorem}
Let $F\in L^\infty\left((0,\infty);H\right)$ and $x\in H$. If $u_\varepsilon(t)$ is the mild solution of
\begin{equation}
iD_t^\alpha u_\varepsilon(t)+\varepsilon Au_\varepsilon(t)+F(t)=0,\quad u_\varepsilon(0)=x,
\label{8.29 Asymptotic behaviours theorem2 equation1}
\end{equation}
on $[0,\infty)$, then
\begin{equation}
\lim\limits_{\varepsilon\to0}u_\varepsilon(t)=x+iI_t^\alpha F(t)
\label{8.29 Asymptotic behaviours theorem2 equation2}
\end{equation}
on $[0,\infty)$ pointwisely.
\label{8.29 Asymptotic behaviours theorem2}
\end{theorem}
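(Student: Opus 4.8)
The plan is to write $u_\varepsilon$ explicitly through the spectral calculus of $A$ and then let $\varepsilon\to0$ by dominated convergence. Let $a(\xi)$ be the real multiplier associated to $A$ through Lemma \ref{Local solution lemma}; since $\varepsilon A$ is selfadjoint with the same unitary $U$ and multiplier $\varepsilon a(\xi)$, the representation $(\ref{mild solution1})$ shows that the mild solution of $(\ref{8.29 Asymptotic behaviours theorem2 equation1})$ is
\[
u_\varepsilon(t)=S_t^\varepsilon x+iG^\varepsilon F(t),
\]
where $S_t^\varepsilon\phi=U\left(E_{\alpha,1}\left(i\varepsilon a(\xi)t^\alpha\right)U^{-1}\phi\right)$ and $G^\varepsilon F(t)=\int_0^tP_{t-\tau}^\varepsilon F(\tau)d\tau$ with $P_s^\varepsilon\phi=U\left(s^{\alpha-1}E_{\alpha,\alpha}\left(i\varepsilon a(\xi)s^\alpha\right)U^{-1}\phi\right)$. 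Since $E_{\alpha,1}(0)=1$ and $E_{\alpha,\alpha}(0)=1/\Gamma(\alpha)$, the natural candidates for the two limits are $x$ and $I_t^\alpha F(t)=g_\alpha*F(t)$, so $(\ref{8.29 Asymptotic behaviours theorem2 equation2})$ amounts to showing $S_t^\varepsilon x\to x$ and $G^\varepsilon F(t)\to I_t^\alpha F(t)$ in $H$.

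First I would record a uniform bound on the Mittag-Leffler functions along the imaginary axis. As $0<\alpha<1$, one may fix $\mu$ with $\frac{\pi\alpha}{2}<\mu<\frac{\pi}{2}\wedge\pi\alpha$, so that every purely imaginary argument $z=i\varepsilon a(\xi)t^\alpha$ obeys $\lvert\arg z\rvert=\frac{\pi}{2}\geq\mu$. Theorem \ref{Mittag-Leffler function asymptotic expansion} then gives $E_{\alpha,1}(z),E_{\alpha,\alpha}(z)=O(\lvert z\rvert^{-1})$ along this ray as $\lvert z\rvert\to\infty$, and together with continuity on compact sets this furnishes a constant $M$ with $\lvert E_{\alpha,1}(iy)\rvert\leq M$ and $\lvert E_{\alpha,\alpha}(iy)\rvert\leq M$ for all real $y$. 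Since $a(\xi)$ is finite a.e. (Lemma \ref{Local solution lemma}), for a.e. $\xi$ one also has the pointwise limits $E_{\alpha,1}(i\varepsilon a(\xi)t^\alpha)\to1$ and $E_{\alpha,\alpha}(i\varepsilon a(\xi)s^\alpha)\to1/\Gamma(\alpha)$ as $\varepsilon\to0$.

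For the homogeneous part I would compute in the spectral representation
\[
\left\lVert S_t^\varepsilon x-x\right\rVert_H^2=\int_\Omega\left\lvert E_{\alpha,1}\left(i\varepsilon a(\xi)t^\alpha\right)-1\right\rvert^2\left\lvert U^{-1}x\right\rvert^2d\mu,
\]
whose integrand tends to $0$ a.e. and is dominated by $(M+1)^2\lvert U^{-1}x\rvert^2\in L^1(\Omega)$, so dominated convergence gives $S_t^\varepsilon x\to x$. For the inhomogeneous part the key identity is
\[
G^\varepsilon F(t)-I_t^\alpha F(t)=\int_0^t\left(P_{t-\tau}^\varepsilon-g_\alpha(t-\tau)I\right)F(\tau)d\tau,
\]
after which Minkowski's integral inequality reduces matters to controlling $\int_0^t\left\lVert\left(P_{t-\tau}^\varepsilon-g_\alpha(t-\tau)I\right)F(\tau)\right\rVert_Hd\tau$. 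For each fixed $\tau$ the same spectral computation and dominated convergence over $\Omega$ yield $\left\lVert\left(P_s^\varepsilon-g_\alpha(s)I\right)F(\tau)\right\rVert_H\to0$; to carry the limit through the $\tau$-integral I would use the uniform bound $\left\lVert P_s^\varepsilon\phi\right\rVert_H\leq Ms^{\alpha-1}\left\lVert\phi\right\rVert_H$, so that the integrand is dominated by $(M+1/\Gamma(\alpha))(t-\tau)^{\alpha-1}\left\lVert F\right\rVert_{L_t^\infty H}$, which is integrable on $(0,t)$ because $\alpha>0$. A second dominated-convergence argument then gives $G^\varepsilon F(t)\to I_t^\alpha F(t)$, and combining the two parts proves $(\ref{8.29 Asymptotic behaviours theorem2 equation2})$.

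The main obstacle is securing the uniform-in-$\varepsilon$ domination that legitimizes both limit passages: one must check that the Mittag-Leffler multipliers stay bounded along the \emph{entire} imaginary axis, which is exactly where the asymptotic expansion with a correctly chosen $\mu\leq\frac{\pi}{2}$ is used and where the hypothesis $\alpha<1$ enters, and one must keep the singular kernel $(t-\tau)^{\alpha-1}$ integrable so that the nested dominated-convergence argument closes. The convergence in the spectral variable is delicate only because $a(\xi)$ can be unbounded, but finiteness a.e. is all the pointwise limit requires.
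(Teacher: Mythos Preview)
Your argument is correct. Both proofs ultimately rest on Lebesgue's dominated convergence theorem in the spectral representation, but the routes differ. The paper first rescales time, writing $u_\varepsilon(t)=S_{\varepsilon^{1/\alpha}t}x+i\varepsilon^{1/\alpha-1}\int_0^tP_{\varepsilon^{1/\alpha}(t-\tau)}F(\tau)d\tau$, and then feeds this through the low/high frequency decomposition $S_t=S_t^l+S_t^h$, $P_t=P_t^l+P_t^h$ built in Section~\ref{9.17Linear estimate. The well-posedness of the linear equation}, handling each piece with the cutoff estimates already established there. You instead work directly with the $\varepsilon$-dependent multipliers and replace the whole decomposition by a single global bound $\sup_{y\in\mathbb{R}}\lvert E_{\alpha,\beta}(iy)\rvert<\infty$, which you extract cleanly from Theorem~\ref{Mittag-Leffler function asymptotic expansion} with a choice of $\mu<\pi/2$. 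This is more elementary and self-contained, and it makes the role of the hypothesis $\alpha<1$ explicit (it is exactly what allows $\mu$ to satisfy both $\mu>\pi\alpha/2$ and $\mu<\pi/2$). The paper's version has the advantage of reusing machinery already in place; yours has the advantage of not needing that machinery at all.
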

\begin{proof}[Proof]
Clearly $u_\varepsilon(t)$ exists and satisfies
\[ u_\varepsilon(t)=S_{\varepsilon^{\frac{1}{\alpha}}t}x+i\varepsilon^{\frac{1}{\alpha}-1}\int_0^tP_{\varepsilon^{\frac{1}{\alpha}}(t-\tau)}F(\tau)d\tau. \]
Thanks to Lebesgue's dominated theorem, it follows that
\begin{align*}
\left\lVert S_{\varepsilon^{\frac{1}{\alpha}}t}x-x\right\rVert_H&\leq\left\lVert S_{\varepsilon^{\frac{1}{\alpha}}t}^lx-x\right\rVert_H+\left\lVert S_{\varepsilon^{\frac{1}{\alpha}}t}^hx\right\rVert_H\\
&\lesssim\left\lVert\chi_{\varepsilon^{\frac{1}{\alpha}}t}a\left(\varepsilon^{\frac{1}{\alpha}}t,\xi\right)U^{-1}x-U^{-1}x\right\rVert_{L^2(\Omega)}+\left\lVert\varepsilon^{-1}t^{-\alpha}a(\xi)^{-1}\chi_{\varepsilon^{\frac{1}{\alpha}}t}^cU^{-1}x\right\rVert_{L^2(\Omega)}\\
&+\left\lVert\varepsilon^{-2}t^{-2\alpha}O\left(\left\lvert a(\xi)\right\rvert^{-2}\right)\chi_{\varepsilon^{\frac{1}{\alpha}}t}^cU^{-1}x\right\rVert_{L^2(\Omega)}\\
&\to0,\quad \varepsilon\to0
\end{align*}
and hence $\lim\limits_{\varepsilon\to0}S_{\varepsilon^{\frac{1}{\alpha}}t}x=x$. On the other hand, also it follows from Lebesgue's dominated theorem that
\begin{align*}
&\left\lVert\varepsilon^{\frac{1}{\alpha}-1}\int_0^tP_{\varepsilon^{\frac{1}{\alpha}}(t-\tau)}F(\tau)d\tau-\frac{1}{\Gamma(\alpha)}\int_0^t\left(t-\tau\right)^{\alpha-1}F(\tau)d\tau\right\rVert_H\\
&\leq\left\lVert\varepsilon^{\frac{1}{\alpha}-1}\int_0^tP_{\varepsilon^{\frac{1}{\alpha}}(t-\tau)}^lF(\tau)d\tau-\frac{1}{\Gamma(\alpha)}\int_0^t\left(t-\tau\right)^{\alpha-1}F(\tau)d\tau\right\rVert_H+\left\lVert\varepsilon^{\frac{1}{\alpha}-1}\int_0^tP_{\varepsilon^{\frac{1}{\alpha}}(t-\tau)}^hF(\tau)d\tau\right\rVert_H\\
&\lesssim\left\lVert\varepsilon^{\frac{1}{\alpha}-1}\int_0^t\chi_{\varepsilon^{\frac{1}{\alpha}}(t-\tau)}b\left(\varepsilon^{\frac{1}{\alpha}}(t-\tau),\xi\right)U^{-1}F(\tau)d\tau-\frac{1}{\Gamma(\alpha)}\int_0^t\left(t-\tau\right)^{\alpha-1}U^{-1}F(\tau)d\tau\right\rVert_{L^2(\Omega)}\\
&+\left\Vert\varepsilon^{-2}\int_0^t\left(t-\tau\right)^{-\alpha-1}a(\xi)^{-2}\chi_{\varepsilon^{\frac{1}{\alpha}}(t-\tau)}^cU^{-1}F(\tau)d\tau\right\rVert_{L^2(\Omega)}\\
&+\left\Vert\varepsilon^{-3}\int_0^t\left(t-\tau\right)^{-2\alpha-1}\left\lvert a(\xi)\right\rvert^{-3}\chi_{\varepsilon^{\frac{1}{\alpha}}(t-\tau)}^cU^{-1}F(\tau)d\tau\right\rVert_{L^2(\Omega)}\\
&\to0,\quad\varepsilon\to0
\end{align*}
and hence $(\ref{8.29 Asymptotic behaviours theorem2 equation2})$ holds.
\end{proof}
\begin{theorem}
If $A$ is injective, let $0<\alpha<\frac{1}{\alpha}$, $F\in L^\infty\left((0,\infty);D(A^{-1})\right)$ be continuous and bounded on $(0,\infty)$ and $x\in H$. If $u_\varepsilon$ is the mild solution of
\begin{equation}
i\varepsilon D_t^\alpha u_\varepsilon(t)+Au_\varepsilon(t)+F(t)=0,\quad u_\varepsilon(0)=x,
\label{8.30 Asymptotic behaviours theorem3 equation1}
\end{equation}
on $[0,\infty)$, then
\begin{equation}
\lim\limits_{\varepsilon\to0}u_\varepsilon(t)=-A^{-1}F(t).
\label{8.30 Asymptotic behaviours theorem3 equation2}
\end{equation}
uniformly on $[\delta,T]$ for any $0<\delta<T$.
\label{8.30 Asymptotic behaviours theorem3}
\end{theorem}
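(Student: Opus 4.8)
The plan is to reduce the statement to the explicit spectral representation of the solution operators and then to pass to the limit by the dominated convergence theorem on $L^2(\Omega)$, exactly in the spirit of the proofs of Theorem $\ref{8.29 Asymptotic behaviours theorem2}$ and Theorem $\ref{8.29 Asymptotic behaviours theorem1}$. First I would record a scaling identity for $(\ref{8.30 Asymptotic behaviours theorem3 equation1})$: dividing by $\varepsilon$ turns it into the linear $(\ref{nonlinear Schrodinger equation Hilbert})$ with $A$ replaced by $\varepsilon^{-1}A$ and source $\varepsilon^{-1}F$, which on the spectral side amounts to replacing $a(\xi)$ by $\varepsilon^{-1}a(\xi)$. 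By the homogeneity of the Mittag-Leffler functions (Theorem $\ref{Derivative Mittag Leffler}$) this gives $S_t^{(\varepsilon)}=S_{\varepsilon^{-1/\alpha}t}$ and $P_t^{(\varepsilon)}=\varepsilon^{1-1/\alpha}P_{\varepsilon^{-1/\alpha}t}$, so that
\[ u_\varepsilon(t)=S_{\varepsilon^{-1/\alpha}t}x+i\varepsilon^{-1/\alpha}\int_0^tP_{\varepsilon^{-1/\alpha}(t-\tau)}F(\tau)\,d\tau. \]
Setting $\sigma_\varepsilon=\varepsilon^{-1/\alpha}t\to\infty$, I would treat the two summands separately.

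For the first summand, $\left\lVert S_{\sigma_\varepsilon}x\right\rVert_H=\left\lVert E_{\alpha,1}(ia(\xi)\sigma_\varepsilon^\alpha)U^{-1}x\right\rVert_{L^2(\Omega)}$. Since $a(\xi)\neq0$ a.e. and $E_{\alpha,1}(ia(\xi)\sigma^\alpha)\to0$ as $\sigma\to\infty$ by Theorem $\ref{Mittag-Leffler function asymptotic expansion}$, while $\left\lvert E_{\alpha,1}\right\rvert$ is bounded, dominated convergence yields $S_{\sigma_\varepsilon}x\to0$; as $\lim_{\sigma\to\infty}\left\lVert S_\sigma x\right\rVert_H=0$ and $\sigma_\varepsilon\geq\varepsilon^{-1/\alpha}\delta$ for $t\geq\delta$, the convergence is uniform on $[\delta,T]$. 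For the Duhamel summand I would split $F(\tau)=F(t)+(F(\tau)-F(t))$. For the constant part I would invoke the identity established in the proof of Theorem $\ref{8.29 Asymptotic behaviours theorem1}$, namely $\int_0^\sigma P_r\phi\,dr=iA^{-1}\phi-iA^{-1}S_\sigma\phi$ for $\phi\in D(A^{-1})$; with the substitution $r=\varepsilon^{-1/\alpha}(t-\tau)$ this gives $i\varepsilon^{-1/\alpha}\int_0^tP_{\varepsilon^{-1/\alpha}(t-\tau)}F(t)\,d\tau=-A^{-1}F(t)+A^{-1}S_{\sigma_\varepsilon}F(t)$, and since $F(t)\in D(A^{-1})$ the term $A^{-1}S_{\sigma_\varepsilon}F(t)$ tends to $0$ exactly as in the previous step. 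This already produces the asserted limit $-A^{-1}F(t)$, so the whole problem is reduced to showing that the remainder $R_\varepsilon(t)=i\int_0^{\sigma_\varepsilon}P_r\big(F(t-\varepsilon^{1/\alpha}r)-F(t)\big)\,dr$ vanishes.

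To handle $R_\varepsilon$, I would view the spectral kernel $K_\varepsilon(s,\xi)=\varepsilon^{-1/\alpha}b(\varepsilon^{-1/\alpha}s,\xi)=\varepsilon^{-1}s^{\alpha-1}E_{\alpha,\alpha}(ia(\xi)\varepsilon^{-1}s^\alpha)$ as an approximate identity in $s$ concentrating at $s=0$, i.e. at $\tau=t$: by Theorem $\ref{Mittag-Leffler function asymptotic expansion}$ one has $\left\lvert E_{\alpha,\alpha}(iy)\right\rvert\lesssim(1+y^2)^{-1}$, whence $\int_0^\infty\left\lvert K_\varepsilon(s,\xi)\right\rvert\,ds\lesssim\left\lvert a(\xi)\right\rvert^{-1}$ uniformly in $\varepsilon$, its total mass $\int_0^{\sigma_\varepsilon}b(r,\xi)\,dr=ia(\xi)^{-1}\left(1-E_{\alpha,1}(ia(\xi)\sigma_\varepsilon^\alpha)\right)\to ia(\xi)^{-1}$, and $\sup_{s\geq\eta}\left\lvert K_\varepsilon(s,\xi)\right\rvert\to0$ for each fixed $\eta>0$. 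Using the continuity of $F$, for a.e. $\xi$ the integrand converges and the $\xi$-wise limit of $R_\varepsilon(t)$ is $0$; I would then conclude by dominated convergence in $\xi$, the dominating function being furnished by $\int_0^\infty\left\lvert K_\varepsilon(s,\cdot)\right\rvert\,ds\lesssim\left\lvert a(\cdot)\right\rvert^{-1}$ together with $\sup_\tau\left\lVert F(\tau)\right\rVert_{D(A^{-1})}<\infty$.

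The hard part will be precisely this dominated passage to the limit in $R_\varepsilon$: because the mass of $K_\varepsilon(\cdot,\xi)$ depends on the frequency like $\left\lvert a(\xi)\right\rvert^{-1}$ and $F(t-\varepsilon^{1/\alpha}r)$ sweeps the entire past history, the crude bound $\left\lVert P_r\psi\right\rVert_H\lesssim r^{\alpha-1}\left\lVert\psi\right\rVert_H$ discards the decay and leaves a logarithmically divergent tail. The remedy I would use is the sharper estimate $\left\lvert b(r,\xi)a(\xi)\right\rvert\lesssim r^{-1}$, hence $\left\lVert P_r\psi\right\rVert_H\lesssim r^{-1}\left\lVert\psi\right\rVert_{D(A^{-1})}$, together with the continuity of $F$: splitting $\int_0^{\sigma_\varepsilon}$ at a fixed cutoff $R$, the continuity of $F$ controls the piece on $[0,R]$ while the concentration and decay of $K_\varepsilon$ control the piece on $[R,\sigma_\varepsilon]$. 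Finally, uniformity on $[\delta,T]$ would follow from the compactness of $\{F(t):t\in[\delta,T]\}$ in $D(A^{-1})$, which upgrades the dominated limits for the $S$-term and for $A^{-1}S_{\sigma_\varepsilon}F(t)$ to uniform ones by an equicontinuity argument, and from the uniform modulus of continuity of $F$ in the remainder estimate.
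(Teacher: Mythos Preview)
Your proposal follows the paper's proof essentially step for step: the same scaling representation $u_\varepsilon(t)=S_{\varepsilon^{-1/\alpha}t}x+i\varepsilon^{-1/\alpha}\int_0^tP_{\varepsilon^{-1/\alpha}(t-\tau)}F(\tau)\,d\tau$, the same dominated-convergence argument for $S_{\varepsilon^{-1/\alpha}t}x\to0$, the same splitting $F(\tau)=F(t)+(F(\tau)-F(t))$ together with the identity $\int_0^\sigma P_r\phi\,dr=iA^{-1}\phi-iA^{-1}S_\sigma\phi$ for the constant piece, and the same near/far decomposition of the remainder at a fixed cutoff $r$ in the rescaled variable, with continuity of $F$ controlling the near part and the $D(A^{-1})$ norm controlling the far part. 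Where you phrase the remainder estimate in approximate-identity language for the kernel $K_\varepsilon$, the paper carries out the identical estimate via the explicit $\chi_s/\chi_s^c$ frequency cutoffs, arriving at precisely your bound $\left\lvert a(\xi)b(r,\xi)\right\rvert\lesssim r^{-1}$ (written there as $\tau^{-1}\left\lvert a(\xi)\right\rvert^{-1}$ in the original variable).
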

\begin{proof}[Proof]
Clearly $u_\varepsilon(t)$ exists and satisfies
\[ u_\varepsilon(t)=S_{\varepsilon^{-\frac{1}{\alpha}}t}x+i\varepsilon^{-\frac{1}{\alpha}}\int_0^tP_{\varepsilon^{-\frac{1}{\alpha}}(t-\tau)}F(\tau)d\tau. \]
By Lebesgue's dominated theorem we can obtain
\begin{equation}
\begin{aligned}
\left\lVert S_{\varepsilon^{-\frac{1}{\alpha}}t}x\right\rVert_H&\leq\left\lVert S_{\varepsilon^{-\frac{1}{\alpha}}t}^lx\right\rVert_H+\left\lVert S_{\varepsilon^{-\frac{1}{\alpha}}t}^hx\right\rVert_H\\
&\lesssim\left\lVert\chi_{\varepsilon^{-\frac{1}{\alpha}}t}a\left(\varepsilon^{-\frac{1}{\alpha}}t,\xi\right)U^{-1}x\right\rVert_{L^2(\Omega)}+\left\lVert\varepsilon t^{-\alpha}a(\xi)^{-1}\chi_{\varepsilon^{-\frac{1}{\alpha}}t}^cU^{-1}x\right\rVert_{L^2(\Omega)}\\
&+\left\lVert\varepsilon^2t^{-2\alpha}\left\lvert a(\xi)\right\rvert^{-2}\chi_{\varepsilon^{-\frac{1}{\alpha}}t}^cU^{-1}x\right\rVert_{L^2(\Omega)}\\
&\to0,\quad\varepsilon\to0
\end{aligned}
\label{8.30 Asymptotic behaviours theorem3 proof equation1}
\end{equation}
and the limit is uniform on $[\delta,T]$. Dividing the second term into two parts such that
\[ \varepsilon^{-\frac{1}{\alpha}}\int_0^tP_{\varepsilon^{-\frac{1}{\alpha}}(t-\tau)}F(\tau)d\tau=v_{1\varepsilon}(t)+v_{2\varepsilon}(t), \]
where
\begin{align*}
&v_{1\varepsilon}(t)=\varepsilon^{-\frac{1}{\alpha}}\int_0^tP_{\varepsilon^{-\frac{1}{\alpha}}(t-\tau)}\left(F(\tau)-F(t)\right)d\tau,\\
&v_{2\varepsilon}(t)=\varepsilon^{-\frac{1}{\alpha}}\int_0^tP_{\varepsilon^{-\frac{1}{\alpha}}(t-\tau)}F(t)d\tau,
\end{align*}
a straightforward computation leads to
\[ v_{2\varepsilon}(t)=iA^{-1}F(t)-iA^{-1}S_{\varepsilon^{-\frac{1}{\alpha}}t}F(t). \]
A similar way as $(\ref{8.30 Asymptotic behaviours theorem3 proof equation1})$ we can prove 
\[ \left\lVert A^{-1}S_{\varepsilon^{-\frac{1}{\alpha}}t}F(t)\right\rVert_H\to0,\quad\varepsilon\to0 \]
uniformly on $[\delta,T]$ by the boundedness of $F(t)$ and hence $\lim\limits_{\varepsilon\to0}v_{2\varepsilon}(t)=iA^{-1}F(t)$ uniformly on $[\delta,T]$. On the other hand, we can choose $r$ large enough and $\varepsilon$ small enough such that
\begin{align*}
&v_{1\varepsilon}(t)\\
&=\varepsilon^{-\frac{1}{\alpha}}\int_0^tP_{\varepsilon^{-\frac{1}{\alpha}}\tau}\left(F(t-\tau)-F(t)\right)d\tau\\
&=\varepsilon^{-\frac{1}{\alpha}}\int_0^{r\varepsilon^{-\frac{1}{\alpha}}}P_{\varepsilon^{-\frac{1}{\alpha}}\tau}\left(F(t-\tau)-F(t)\right)d\tau+\varepsilon^{-\frac{1}{\alpha}}\int_{r\varepsilon^{-\frac{1}{\alpha}}}^tP_{\varepsilon^{-\frac{1}{\alpha}}\tau}\left(F(t-\tau)-F(t)\right)d\tau\\
&=\int_0^rP_\tau\left(F\left(t-\varepsilon^{\frac{1}{\alpha}}\right)-F(t)\right)d\tau+\varepsilon^{-\frac{1}{\alpha}}\int_{r\varepsilon^{-\frac{1}{\alpha}}}^tP_{\varepsilon^{-\frac{1}{\alpha}}\tau}\left(F(t-\tau)-F(t)\right)d\tau\\
&=:v_{1\varepsilon}^{(1)}(t)+v_{1\varepsilon}^{(2)}(t)
\end{align*}
By the continuity of $F(t)$, for any given $\rho>0$, we can choose $\varepsilon$ small enough such that
\[ \left\lVert F\left(t-\varepsilon^{\frac{1}{\alpha}}\right)-F(t)\right\rVert\lesssim\frac{\rho}{r^\alpha}, \]
then from $(\ref{lemma linear new3 proof equation3})$ it follows that
\begin{equation}
\left\lVert v_{1\varepsilon}^{(1)}(t)\right\rVert_H\lesssim\int_0^r\tau^{\alpha-1}\left\lVert F\left(t-\varepsilon^{\frac{1}{\alpha}}\right)-F(t)\right\rVert d\tau\lesssim\rho.
\label{8.30 Asymptotic behaviours theorem3 proof equation1}
\end{equation}
For $v_{1\varepsilon}^{(2)}(t)$, we have, by a slightly careful calculation, that
\begin{align*}
&\left\lVert v_{1\varepsilon}^{(2)}(t)\right\rVert_H\\
&\leq\left\lVert\varepsilon^{-\frac{1}{\alpha}}\int_{r\varepsilon^{-\frac{1}{\alpha}}}^tP_{\varepsilon^{-\frac{1}{\alpha}}\tau}^l\left(F(t-\tau)-F(t)\right)d\tau\right\rVert_H+\left\lVert\varepsilon^{-\frac{1}{\alpha}}\int_{r\varepsilon^{-\frac{1}{\alpha}}}^tP_{\varepsilon^{-\frac{1}{\alpha}}\tau}^h\left(F(t-\tau)-F(t)\right)d\tau\right\rVert_H\\
&\lesssim\left\lVert\varepsilon^{-\frac{1}{\alpha}}\int_{r\varepsilon^{-\frac{1}{\alpha}}}^tP_{\varepsilon^{-\frac{1}{\alpha}}\tau}^l\left(F(t-\tau)-F(t)\right)d\tau\right\rVert_H+\left\lVert\varepsilon\int_{r\varepsilon^{-\frac{1}{\alpha}}}^t\tau^{-\alpha-1}\mathbf{A}_{\varepsilon^{-\frac{1}{\alpha}}\tau}^{-2}\left(F(t-\tau)-F(t)\right)d\tau\right\rVert_H\\
&+\left\lVert\varepsilon^{-\frac{1}{\alpha}}\int_{r\varepsilon^{-\frac{1}{\alpha}}}^tR_{\varepsilon^{-\frac{1}{\alpha}}\tau}^P\left(F(t-\tau)-F(t)\right)d\tau\right\rVert_H\\
&\lesssim\left\lVert\varepsilon^{-\frac{1}{\alpha}}\int_{r\varepsilon^{-\frac{1}{\alpha}}}^t\chi_{\varepsilon^{-\frac{1}{\alpha}}\tau}b\left(\varepsilon^{-\frac{1}{\alpha}}\tau,\xi\right)U^{-1}\left(F(t-\tau)-F(t)\right)d\tau\right\rVert_{L^2(\Omega)}\\
&+\left\lVert\varepsilon\int_{r\varepsilon^{-\frac{1}{\alpha}}}^t\tau^{-\alpha-1}a(\xi)^{-2}\chi_{\varepsilon^{-\frac{1}{\alpha}}\tau}^cU^{-1}\left(F(t-\tau)-F(t)\right)d\tau\right\rVert_{L^2(\Omega)}\\
&+\left\lVert\varepsilon^2\int_{r\varepsilon^{-\frac{1}{\alpha}}}^t\tau^{-2\alpha-1}\left\lvert a(\xi)\right\rvert^{-3}\chi_{\varepsilon^{-\frac{1}{\alpha}}\tau}^cU^{-1}\left(F(t-\tau)-F(t)\right)d\tau\right\rVert_{L^2(\Omega)}\\
&\lesssim\int_{r\varepsilon^{-\frac{1}{\alpha}}}^t\tau^{-1}\left\lVert\left\lvert a(\xi)\right\rvert^{-1}U^{-1}\left(F(t-\tau)-F(t)\right)\right\rVert_{L^2(\Omega)}d\tau\lesssim r^{-1}\varepsilon^{\frac{1}{\alpha}}\left\lVert F\right\rVert_{L_t^\infty D(A^{-1})}.
\end{align*}
We can choose $r$ large enough and $\varepsilon$ small enough such that
\begin{equation}
\left\lVert v_{1\varepsilon}^{(2)}(t)\right\rVert_H\lesssim r^{-1}\varepsilon^{\frac{1}{\alpha}}\left\lVert F\right\rVert_{L_t^\infty D(A^{-1})}\lesssim\rho.
\label{8.30 Asymptotic behaviours theorem3 proof equation2}
\end{equation}
Combining $(\ref{8.30 Asymptotic behaviours theorem3 proof equation1})$ and $(\ref{8.30 Asymptotic behaviours theorem3 proof equation2})$ we obtain that for any given $\rho$ we can choose $r$ large enough and $\varepsilon$ small enough such that $\left\lVert v_{1\varepsilon}(t)\right\rVert_H\lesssim\rho$. Thus $v_{1\varepsilon}(t)\to0$ as $\varepsilon\to0$ uniformly on $[\delta,T]$ and then $(\ref{8.30 Asymptotic behaviours theorem3 equation2})$ holds.
\end{proof}
\end{appendices}
\section{Declarations}
\noindent\textbf{Conflict of interest}\quad The authors declare that they have no conflict of interest.
\bibliography{Reference}


\begin{thebibliography}{38}
\ifx \bisbn   \undefined \def \bisbn  #1{ISBN #1}\fi
\ifx \binits  \undefined \def \binits#1{#1}\fi
\ifx \bauthor  \undefined \def \bauthor#1{#1}\fi
\ifx \batitle  \undefined \def \batitle#1{#1}\fi
\ifx \bjtitle  \undefined \def \bjtitle#1{#1}\fi
\ifx \bvolume  \undefined \def \bvolume#1{\textbf{#1}}\fi
\ifx \byear  \undefined \def \byear#1{#1}\fi
\ifx \bissue  \undefined \def \bissue#1{#1}\fi
\ifx \bfpage  \undefined \def \bfpage#1{#1}\fi
\ifx \blpage  \undefined \def \blpage #1{#1}\fi
\ifx \burl  \undefined \def \burl#1{\textsf{#1}}\fi
\ifx \doiurl  \undefined \def \doiurl#1{\url{https://doi.org/#1}}\fi
\ifx \betal  \undefined \def \betal{\textit{et al.}}\fi
\ifx \binstitute  \undefined \def \binstitute#1{#1}\fi
\ifx \binstitutionaled  \undefined \def \binstitutionaled#1{#1}\fi
\ifx \bctitle  \undefined \def \bctitle#1{#1}\fi
\ifx \beditor  \undefined \def \beditor#1{#1}\fi
\ifx \bpublisher  \undefined \def \bpublisher#1{#1}\fi
\ifx \bbtitle  \undefined \def \bbtitle#1{#1}\fi
\ifx \bedition  \undefined \def \bedition#1{#1}\fi
\ifx \bseriesno  \undefined \def \bseriesno#1{#1}\fi
\ifx \blocation  \undefined \def \blocation#1{#1}\fi
\ifx \bsertitle  \undefined \def \bsertitle#1{#1}\fi
\ifx \bsnm \undefined \def \bsnm#1{#1}\fi
\ifx \bsuffix \undefined \def \bsuffix#1{#1}\fi
\ifx \bparticle \undefined \def \bparticle#1{#1}\fi
\ifx \barticle \undefined \def \barticle#1{#1}\fi
\bibcommenthead
\ifx \bconfdate \undefined \def \bconfdate #1{#1}\fi
\ifx \botherref \undefined \def \botherref #1{#1}\fi
\ifx \url \undefined \def \url#1{\textsf{#1}}\fi
\ifx \bchapter \undefined \def \bchapter#1{#1}\fi
\ifx \bbook \undefined \def \bbook#1{#1}\fi
\ifx \bcomment \undefined \def \bcomment#1{#1}\fi
\ifx \oauthor \undefined \def \oauthor#1{#1}\fi
\ifx \citeauthoryear \undefined \def \citeauthoryear#1{#1}\fi
\ifx \endbibitem  \undefined \def \endbibitem {}\fi
\ifx \bconflocation  \undefined \def \bconflocation#1{#1}\fi
\ifx \arxivurl  \undefined \def \arxivurl#1{\textsf{#1}}\fi
\csname PreBibitemsHook\endcsname

\bibitem[\protect\citeauthoryear{Kilbas
  et~al.}{2006}]{Theory-and-Applications-of-Fractional-Differential-Equations-Chapter2-Fractional-integrals-and-fractional-derivatives}
\begin{bbook}
\bauthor{\bsnm{Kilbas}, \binits{A.A.}},
\bauthor{\bsnm{Srivastava}, \binits{H.M.}},
\bauthor{\bsnm{Trujillo}, \binits{J.J.}}:
\bbtitle{Chapter 2 Fractional Integrals and Fractional Derivatives}.
\bsertitle{North-Holland Mathematics Studies},
vol. \bseriesno{204},
pp. \bfpage{69}--\blpage{133}.
\bpublisher{North-Holland}, \blocation{???}
(\byear{2006}).
\doiurl{10.1016/S0304-0208(06)80003-4} .
\burl{https://www.sciencedirect.com/science/article/pii/S0304020806800034}
\end{bbook}
\endbibitem

\bibitem[\protect\citeauthoryear{Mainardi}{2010}]{Fractional-Calculus-and-Waves-in-Linear-Viscoelasticity}
\begin{bbook}
\bauthor{\bsnm{Mainardi}, \binits{F.}}:
\bbtitle{Fractional Calculus and Waves in Linear Viscoelasticity}.
\bpublisher{IMPERIAL COLLEGE PRESS}, \blocation{???}
(\byear{2010}).
\doiurl{10.1142/p614} .
\burl{https://www.worldscientific.com/doi/abs/10.1142/p614}
\end{bbook}
\endbibitem

\bibitem[\protect\citeauthoryear{Podlubny}{1999}]{Fractional-differential-equations-an-introduction-to-fractional-derivatives-fractional-differential-equations-to-methods-of-their-solution-and-some-of-their-applications}
\begin{bbook}
\bauthor{\bsnm{Podlubny}, \binits{I.}}:
\bbtitle{Fractional Differential Equations (an Introduction to Fractional
  Derivatives, Fractional Differential Equations, to Methods of Their Solution
  and Some of Their Applications)},
(\byear{1999})
\end{bbook}
\endbibitem

\bibitem[\protect\citeauthoryear{Constantin and
  Saut}{1988}]{Local-smoothing-properties-of-dispersive-equations}
\begin{barticle}
\bauthor{\bsnm{Constantin}, \binits{P.}},
\bauthor{\bsnm{Saut}, \binits{J.}}:
\batitle{Local smoothing properties of dispersive equations}.
\bjtitle{Journal of the American Mathematical Society}
\bvolume{1}(\bissue{2}),
\bfpage{413}--\blpage{439}
(\byear{1988})
\doiurl{10.4171/RMI/229}
\end{barticle}
\endbibitem

\bibitem[\protect\citeauthoryear{Kenig
  et~al.}{1991}]{Oscillatory-Integrals-and-Regularity-of-Dispersive-Equations}
\begin{barticle}
\bauthor{\bsnm{Kenig}, \binits{C.E.}},
\bauthor{\bsnm{Ponce}, \binits{G.}},
\bauthor{\bsnm{Vega}, \binits{L.}}:
\batitle{Oscillatory integrals and regularity of dispersive equations}.
\bjtitle{Indiana University Mathematics Journal}
\bvolume{40}(\bissue{1}),
\bfpage{33}--\blpage{69}
(\byear{1991}).
Accessed 2023-09-16
\end{barticle}
\endbibitem

\bibitem[\protect\citeauthoryear{Momani
  et~al.}{2008}]{Variational-iteration-method-for-solving-the-space-and-time-fractional-KdV-equation}
\begin{barticle}
\bauthor{\bsnm{Momani}, \binits{S.}},
\bauthor{\bsnm{Odibat}, \binits{Z.}},
\bauthor{\bsnm{Alawneh}, \binits{A.}}:
\batitle{Variational iteration method for solving the space and time-fractional
  kdv equation}.
\bjtitle{Numerical Methods for Partial Differential Equations}
\bvolume{24}(\bissue{1}),
\bfpage{262}--\blpage{271}
(\byear{2008})
\doiurl{10.1002/num.20247}
{\href{https://arxiv.org/abs/https://onlinelibrary.wiley.com/doi/pdf/10.1002/num.20247}{{https://onlinelibrary.wiley.com/doi/pdf/10.1002/num.20247}}}
\end{barticle}
\endbibitem

\bibitem[\protect\citeauthoryear{Abdulaziz
  et~al.}{2008}]{Application-of-homotopy-perturbation-method-to-fractional-IVPs}
\begin{barticle}
\bauthor{\bsnm{Abdulaziz}, \binits{O.}},
\bauthor{\bsnm{Hashim}, \binits{I.}},
\bauthor{\bsnm{Momani}, \binits{S.}}:
\batitle{Application of homotopy-perturbation method to fractional ivps}.
\bjtitle{Journal of Computational and Applied Mathematics}
\bvolume{216}(\bissue{2}),
\bfpage{574}--\blpage{584}
(\byear{2008})
\doiurl{10.1016/j.cam.2007.06.010}
\end{barticle}
\endbibitem

\bibitem[\protect\citeauthoryear{Hu
  et~al.}{2014}]{Lie-symmetry-analysis-of-the-time-fractional-KdV-type-equation}
\begin{barticle}
\bauthor{\bsnm{Hu}, \binits{J.}},
\bauthor{\bsnm{Ye}, \binits{Y.}},
\bauthor{\bsnm{Shen}, \binits{S.}},
\bauthor{\bsnm{Zhang}, \binits{J.}}:
\batitle{Lie symmetry analysis of the time fractional kdv-type equation}.
\bjtitle{Applied Mathematics and Computation}
\bvolume{233},
\bfpage{439}--\blpage{444}
(\byear{2014})
\doiurl{10.1016/j.amc.2014.02.010}
\end{barticle}
\endbibitem

\bibitem[\protect\citeauthoryear{Kato}{1979}]{On-the-Korteweg-de-Vries-equation}
\begin{botherref}
\oauthor{\bsnm{Kato}, \binits{T.}}:
On the korteweg-de vries equation.
manuscripta mathematica
\textbf{28}
(1979)
\end{botherref}
\endbibitem

\bibitem[\protect\citeauthoryear{Kenig
  et~al.}{1991}]{Well-Posedness-of-the-Initial-Value-Problem-for-the-Korteweg-de-Vries-Equation}
\begin{barticle}
\bauthor{\bsnm{Kenig}, \binits{C.E.}},
\bauthor{\bsnm{Ponce}, \binits{G.}},
\bauthor{\bsnm{Vega}, \binits{L.}}:
\batitle{Well-posedness of the initial value problem for the korteweg-de vries
  equation}.
\bjtitle{Journal of the American Mathematical Society}
\bvolume{4}(\bissue{2}),
\bfpage{323}--\blpage{347}
(\byear{1991}).
Accessed 2023-09-16
\end{barticle}
\endbibitem

\bibitem[\protect\citeauthoryear{Kenig and
  Koenig}{2003}]{On-the-local-well-posedness-of-the-Benjamin-Ono-and-modified-Benjamin-Ono-equations}
\begin{barticle}
\bauthor{\bsnm{Kenig}, \binits{C.E.}},
\bauthor{\bsnm{Koenig}, \binits{K.D.}}:
\batitle{On the local well-posedness of the benjamin-ono and modified
  benjamin-ono equations}.
\bjtitle{Mathematical Research Letters}
\bvolume{10}(\bissue{6}),
\bfpage{879}--\blpage{895}
(\byear{2003})
\doiurl{10.4310/MRL.2003.v10.n6.a13}
\end{barticle}
\endbibitem

\bibitem[\protect\citeauthoryear{Ponce}{1991}]{On-the-global-well-posedness-of-the-Benjamin-Ono-equation}
\begin{barticle}
\bauthor{\bsnm{Ponce}, \binits{G.}}:
\batitle{{On the global well-posedness of the Benjamin-Ono equation}}.
\bjtitle{Differential and Integral Equations}
\bvolume{4}(\bissue{3}),
\bfpage{527}--\blpage{542}
(\byear{1991})
\doiurl{10.57262/die/1372700427}
\end{barticle}
\endbibitem

\bibitem[\protect\citeauthoryear{Achar
  et~al.}{2013}]{Time-Fractional-Schrodinger-Equation-Revisited}
\begin{barticle}
\bauthor{\bsnm{Achar}, \binits{B.N.N.}},
\bauthor{\bsnm{Yale}, \binits{B.T.}},
\bauthor{\bsnm{Hanneken}, \binits{J.W.}}:
\batitle{Time fractional schr\"odinger equation revisited}.
\bjtitle{Advances in Mathematical Physics}
\bvolume{2013},
\bfpage{123}--\blpage{128}
(\byear{2013})
\end{barticle}
\endbibitem

\bibitem[\protect\citeauthoryear{Su
  et~al.}{2019}]{Local-well-posedness-of-semilinear-space-time-fractional-Schrodinger-equation}
\begin{barticle}
\bauthor{\bsnm{Su}, \binits{X.}},
\bauthor{\bsnm{Zhao}, \binits{S.}},
\bauthor{\bsnm{Li}, \binits{M.}}:
\batitle{Local well-posedness of semilinear space-time fractional schr\"odinger
  equation}.
\bjtitle{Journal of Mathematical Analysis and Applications}
\bvolume{479}(\bissue{1}),
\bfpage{1244}--\blpage{1265}
(\byear{2019})
\doiurl{10.1016/j.jmaa.2019.06.077}
\end{barticle}
\endbibitem

\bibitem[\protect\citeauthoryear{Peng
  et~al.}{2019}]{The-well-posedness-for-fractional-nonlinear-Schrodinger-equations}
\begin{barticle}
\bauthor{\bsnm{Peng}, \binits{L.}},
\bauthor{\bsnm{Zhou}, \binits{Y.}},
\bauthor{\bsnm{Ahmad}, \binits{B.}}:
\batitle{The well-posedness for fractional nonlinear schr\"odinger equations}.
\bjtitle{Computers \& Mathematics with Applications}
\bvolume{77}(\bissue{7}),
\bfpage{1998}--\blpage{2005}
(\byear{2019})
\doiurl{10.1016/j.camwa.2018.11.037}
\end{barticle}
\endbibitem

\bibitem[\protect\citeauthoryear{Wang
  et~al.}{2012}]{Fractional-Schrodinger-equations-with-potential-and-optimal-controls}
\begin{barticle}
\bauthor{\bsnm{Wang}, \binits{J.}},
\bauthor{\bsnm{Zhou}, \binits{Y.}},
\bauthor{\bsnm{Wei}, \binits{W.}}:
\batitle{Fractional schr\"odinger equations with potential and optimal
  controls}.
\bjtitle{Nonlinear Analysis: Real World Applications}
\bvolume{13}(\bissue{6}),
\bfpage{2755}--\blpage{2766}
(\byear{2012})
\doiurl{10.1016/j.nonrwa.2012.04.004}
\end{barticle}
\endbibitem

\bibitem[\protect\citeauthoryear{Kato}{1987}]{On-nonlinear-Schrodinger-equations}
\begin{barticle}
\bauthor{\bsnm{Kato}, \binits{T.}}:
\batitle{On nonlinear {Schr\"odinger} equations}.
\bjtitle{Annales de l'I.H.P. Physique th\'eorique}
\bvolume{46}(\bissue{1}),
\bfpage{113}--\blpage{129}
(\byear{1987})
\end{barticle}
\endbibitem

\bibitem[\protect\citeauthoryear{Kato}{1995}]{On-nonlinear-Schrodinger-equations-II-HS-solutions-and-unconditional-well-posedness}
\begin{botherref}
\oauthor{\bsnm{Kato}, \binits{T.}}:
On nonlinear schrödinger equations, ii.$h^s$-solutions and unconditional
  well-posedness.
Journal d’Analyse Mathématique
\textbf{67}
(1995)
\end{botherref}
\endbibitem

\bibitem[\protect\citeauthoryear{Cazenave}{2003}]{Semilinear-Schrodinger-equations}
\begin{bbook}
\bauthor{\bsnm{Cazenave}, \binits{T.}}:
\bbtitle{Semilinear Schr{\"o}dinger Equations}.
\bsertitle{Courant Lecture Notes in Mathematics},
vol. \bseriesno{10}.
\bpublisher{American Mathematical Society}, \blocation{???}
(\byear{2003}).
\doiurl{10.1090/cln/010} .
\burl{https://hal.science/hal-02896378}
\end{bbook}
\endbibitem

\bibitem[\protect\citeauthoryear{Ginibre and
  Velo}{1979}]{On-a-class-of-nonlinear-Schrodinger-equations-I-The-Cauchy-problem-general-case}
\begin{barticle}
\bauthor{\bsnm{Ginibre}, \binits{J.}},
\bauthor{\bsnm{Velo}, \binits{G.}}:
\batitle{On a class of nonlinear schrödinger equations. i. the cauchy problem,
  general case}.
\bjtitle{Journal of Functional Analysis}
\bvolume{32}(\bissue{1}),
\bfpage{1}--\blpage{32}
(\byear{1979})
\doiurl{10.1016/0022-1236(79)90076-4}
\end{barticle}
\endbibitem

\bibitem[\protect\citeauthoryear{Ginibre and
  Velo}{1985}]{The-global-Cauchy-problem-for-the-nonlinear-Schrodinger-equation-revisited}
\begin{barticle}
\bauthor{\bsnm{Ginibre}, \binits{J.}},
\bauthor{\bsnm{Velo}, \binits{G.}}:
\batitle{The global cauchy problem for the nonlinear schrödinger equation
  revisited.}
\bjtitle{Ann. Inst. H. Poincaré Anal. Non Linéaire 2}
\bvolume{2}(\bissue{4}),
\bfpage{309}--\blpage{327}
(\byear{1985})
\doiurl{10.1016/S0294-1449(16)30399-7}
\end{barticle}
\endbibitem

\bibitem[\protect\citeauthoryear{{Bourgain}}{1999}]{Hyperbolic-Equations-and-Frequency-Interactions}
\begin{bbook}
\bauthor{\bsnm{{Bourgain}}, \binits{J.}}:
\bbtitle{{Nonlinear Schr{\"o}dinger Equations}}
vol. \bseriesno{5},
p. \bfpage{3}
(\byear{1999})
\end{bbook}
\endbibitem

\bibitem[\protect\citeauthoryear{Laskin}{2002}]{Fractional-Schrodinger-equation}
\begin{barticle}
\bauthor{\bsnm{Laskin}, \binits{N.}}:
\batitle{Fractional schr\"odinger equation}.
\bjtitle{Phys. Rev. E}
\bvolume{66},
\bfpage{056108}
(\byear{2002})
\doiurl{10.1103/PhysRevE.66.056108}
\end{barticle}
\endbibitem

\bibitem[\protect\citeauthoryear{Laskin}{2000a}]{Fractional-quantum-mechanics}
\begin{barticle}
\bauthor{\bsnm{Laskin}, \binits{N.}}:
\batitle{Fractional quantum mechanics}.
\bjtitle{Phys. Rev. E}
\bvolume{62},
\bfpage{3135}--\blpage{3145}
(\byear{2000})
\doiurl{10.1103/PhysRevE.62.3135}
\end{barticle}
\endbibitem

\bibitem[\protect\citeauthoryear{Laskin}{2000b}]{Fractional-quantum-mechanics-and-Levy-path-integrals}
\begin{barticle}
\bauthor{\bsnm{Laskin}, \binits{N.}}:
\batitle{Fractional quantum mechanics and l\'evy path integrals}.
\bjtitle{Physics Letters A}
\bvolume{268}(\bissue{4}),
\bfpage{298}--\blpage{305}
(\byear{2000})
\doiurl{10.1016/S0375-9601(00)00201-2}
\end{barticle}
\endbibitem

\bibitem[\protect\citeauthoryear{Laskin}{2000c}]{Fractals-and-quantum-mechanics}
\begin{barticle}
\bauthor{\bsnm{Laskin}, \binits{N.}}:
\batitle{{Fractals and quantum mechanics}}.
\bjtitle{Chaos: An Interdisciplinary Journal of Nonlinear Science}
\bvolume{10}(\bissue{4}),
\bfpage{780}--\blpage{790}
(\byear{2000})
\doiurl{10.1063/1.1050284}
{\href{https://arxiv.org/abs/https://pubs.aip.org/aip/cha/article-pdf/10/4/780/7861545/780\_1\_online.pdf}{{https://pubs.aip.org/aip/cha/article-pdf/10/4/780/7861545/780\_1\_online.pdf}}}
\end{barticle}
\endbibitem

\bibitem[\protect\citeauthoryear{Guo
  et~al.}{2008}]{Existence-of-the-global-smooth-solution-to-the-period-boundary-value-problem-of-fractional-nonlinear-Schrodinger-equation}
\begin{barticle}
\bauthor{\bsnm{Guo}, \binits{B.}},
\bauthor{\bsnm{Han}, \binits{Y.}},
\bauthor{\bsnm{Xin}, \binits{J.}}:
\batitle{Existence of the global smooth solution to the period boundary value
  problem of fractional nonlinear schrödinger equation}.
\bjtitle{Applied Mathematics and Computation}
\bvolume{204}(\bissue{1}),
\bfpage{468}--\blpage{477}
(\byear{2008})
\doiurl{10.1016/j.amc.2008.07.003}
\end{barticle}
\endbibitem

\bibitem[\protect\citeauthoryear{Guo and
  Huo}{2010}]{Global-Well-Posedness-for-the-Fractional-Nonlinear-Schrodinger-Equation}
\begin{barticle}
\bauthor{\bsnm{Guo}, \binits{B.}},
\bauthor{\bsnm{Huo}, \binits{Z.}}:
\batitle{Global well-posedness for the fractional nonlinear schrödinger
  equation}.
\bjtitle{Communications in Partial Differential Equations}
\bvolume{36}(\bissue{2}),
\bfpage{247}--\blpage{255}
(\byear{2010})
\doiurl{10.1080/03605302.2010.503769}
{\href{https://arxiv.org/abs/https://doi.org/10.1080/03605302.2010.503769}{{https://doi.org/10.1080/03605302.2010.503769}}}
\end{barticle}
\endbibitem

\bibitem[\protect\citeauthoryear{Hong and
  Sire}{2015}]{On-Fractional-Schrodinger-Equations-in-sobolev-spaces}
\begin{barticle}
\bauthor{\bsnm{Hong}, \binits{Y.}},
\bauthor{\bsnm{Sire}, \binits{Y.}}:
\batitle{On fractional schrödinger equations in sobolev spaces}.
\bjtitle{Communications on Pure and Applied Analysis}
\bvolume{14}(\bissue{6}),
\bfpage{2265}--\blpage{2282}
(\byear{2015})
\doiurl{10.3934/cpaa.2015.14.2265}
\end{barticle}
\endbibitem

\bibitem[\protect\citeauthoryear{Naber}{2004}]{Time-fractional-Schrodinger-equation}
\begin{barticle}
\bauthor{\bsnm{Naber}, \binits{M.}}:
\batitle{{Time fractional Schr\"odinger equation}}.
\bjtitle{Journal of Mathematical Physics}
\bvolume{45}(\bissue{8}),
\bfpage{3339}--\blpage{3352}
(\byear{2004})
\doiurl{10.1063/1.1769611}
{\href{https://arxiv.org/abs/https://pubs.aip.org/aip/jmp/article-pdf/45/8/3339/8173985/3339\_1\_online.pdf}{{https://pubs.aip.org/aip/jmp/article-pdf/45/8/3339/8173985/3339\_1\_online.pdf}}}
\end{barticle}
\endbibitem

\bibitem[\protect\citeauthoryear{Grande}{2019}]{Space-Time-Fractional-Nonlinear-Schrodinger-Equation}
\begin{barticle}
\bauthor{\bsnm{Grande}, \binits{R.}}:
\batitle{Space-time fractional nonlinear schrödinger equation}.
\bjtitle{SIAM Journal on Mathematical Analysis}
\bvolume{51}(\bissue{5}),
\bfpage{4172}--\blpage{4212}
(\byear{2019})
\doiurl{10.1137/19M1247140}
{\href{https://arxiv.org/abs/https://doi.org/10.1137/19M1247140}{{https://doi.org/10.1137/19M1247140}}}
\end{barticle}
\endbibitem

\bibitem[\protect\citeauthoryear{Zhou
  et~al.}{2018}]{Duhamels-formula-for-time-fractional-Schrodinger-equations}
\begin{barticle}
\bauthor{\bsnm{Zhou}, \binits{Y.}},
\bauthor{\bsnm{Peng}, \binits{L.}},
\bauthor{\bsnm{Huang}, \binits{Y.}}:
\batitle{Duhamel's formula for time-fractional schr\"odinger equations}.
\bjtitle{Mathematical Methods in the Applied Sciences}
\bvolume{41}(\bissue{17}),
\bfpage{8345}--\blpage{8349}
(\byear{2018})
\doiurl{10.1002/mma.5222}
{\href{https://arxiv.org/abs/https://onlinelibrary.wiley.com/doi/pdf/10.1002/mma.5222}{{https://onlinelibrary.wiley.com/doi/pdf/10.1002/mma.5222}}}
\end{barticle}
\endbibitem

\bibitem[\protect\citeauthoryear{Henry}{}]{Geometric-Theory-of-Semilinear-Parabolic-Equations}
\begin{botherref}
\oauthor{\bsnm{Henry}, \binits{D.}}:
Geometric Theory of Semilinear Parabolic Equations.
Lecture Notes in Mathematics.
\doiurl{10.1007/BFb0089647}
\end{botherref}
\endbibitem

\bibitem[\protect\citeauthoryear{Yagi}{2009}]{Abstract-Parabolic-Evolution-Equations-and-their-Applications}
\begin{bbook}
\bauthor{\bsnm{Yagi}, \binits{A.}}:
\bbtitle{Abstract Parabolic Evolution Equations and Their Applications}.
\bsertitle{Springer Monographs in Mathematics}.
\bpublisher{Springer}, \blocation{???}
(\byear{2009}).
\doiurl{10.1007/978-3-642-04631-5}
\end{bbook}
\endbibitem

\bibitem[\protect\citeauthoryear{Medved}{2002}]{Integral-Inequalities-and-Global-Solutions-of-Semilinear-Evolution-Equations}
\begin{barticle}
\bauthor{\bsnm{Medved}, \binits{M.}}:
\batitle{Integral inequalities and global solutions of semilinear evolution
  equations}.
\bjtitle{Journal of Mathematical Analysis and Applications}
\bvolume{267}(\bissue{2}),
\bfpage{643}--\blpage{650}
(\byear{2002})
\doiurl{10.1006/jmaa.2001.7798}
\end{barticle}
\endbibitem

\bibitem[\protect\citeauthoryear{Kato}{1995}]{Perturbation-Theory-for-Linear-Operators}
\begin{bbook}
\bauthor{\bsnm{Kato}, \binits{T.}}:
\bbtitle{Perturbation Theory for Linear Operators}.
\bsertitle{Classics in Mathematics}.
\bpublisher{Springer}, \blocation{???}
(\byear{1995}).
\doiurl{10.1007/978-3-642-66282-9}
\end{bbook}
\endbibitem

\bibitem[\protect\citeauthoryear{Kilbas
  et~al.}{2006}]{Theory-and-Applications-of-Fractional-Differential-Equations-Chapter1-Preliminaries}
\begin{bbook}
\bauthor{\bsnm{Kilbas}, \binits{A.A.}},
\bauthor{\bsnm{Srivastava}, \binits{H.M.}},
\bauthor{\bsnm{Trujillo}, \binits{J.J.}}:
\bbtitle{Chapter 1 Preliminaries}.
\bsertitle{North-Holland Mathematics Studies},
vol. \bseriesno{204},
pp. \bfpage{1}--\blpage{68}.
\bpublisher{North-Holland}, \blocation{???}
(\byear{2006}).
\doiurl{10.1016/S0304-0208(06)80002-2} .
\burl{https://www.sciencedirect.com/science/article/pii/S0304020806800022}
\end{bbook}
\endbibitem

\bibitem[\protect\citeauthoryear{Reed and
  Simon}{1972}]{Methods-of-Modern-Mathematical-Physics-VIII-Unbounded-Operators}
\begin{bchapter}
\bauthor{\bsnm{Reed}, \binits{M.}},
\bauthor{\bsnm{Simon}, \binits{B.}}:
\bctitle{Viii - unbounded operators}.
In: \beditor{\bsnm{Reed}, \binits{M.}},
\beditor{\bsnm{Simon}, \binits{B.}} (eds.)
\bbtitle{Methods of Modern Mathematical Physics},
pp. \bfpage{249}--\blpage{317}.
\bpublisher{Academic Press}, \blocation{???}
(\byear{1972}).
\doiurl{10.1016/B978-0-12-585001-8.50014-3} .
\burl{https://www.sciencedirect.com/science/article/pii/B9780125850018500143}
\end{bchapter}
\endbibitem

\end{thebibliography}
\end{document}